\documentclass[a4paper]{article}

\usepackage{a4}
\usepackage{amsmath, amssymb}
\usepackage{amsthm}
\usepackage{mathtools}
\usepackage{graphicx,color}
\usepackage{float}
\usepackage{xspace}
\usepackage[scientific-notation=true]{siunitx}
\usepackage{enumitem}
\usepackage{booktabs}

\usepackage{tikz}
\usetikzlibrary{shapes.geometric, arrows}
\usetikzlibrary{positioning}
\usetikzlibrary{calc}
\usetikzlibrary{spy}
\usetikzlibrary{decorations.text}
\usetikzlibrary{arrows.meta}
\usepackage{pgfplots}
\usepackage{pgfplotstable}
\pgfplotsset{width=10cm,compat=1.18}
\usepackage[a4paper, total={6.5in, 8.5in}]{geometry}

\usepackage[justification=centering]{caption}

\usepackage{hyperref}
\usepackage[all]{hypcap}

\newcommand{\R}{\mathbb{R}}

\definecolor{light_gray}{gray}{0.75}
\definecolor{lighter_gray}{gray}{0.5}
\colorlet{light_blue}{blue!20}
\definecolor{dark_green}{rgb}{0.0, 0.6, 0.0}
\definecolor{royal_blue}{rgb}{0.0, 0.22, 0.66}
\definecolor{navy_blue}{rgb}{0.0, 0.0, 0.5}
\definecolor{crimson}{rgb}{0.79, 0.0, 0.09}
\definecolor{amethyst}{rgb}{0.6, 0.4, 0.8}
\definecolor{azure}{rgb}{0.0, 0.5, 1.0}

\theoremstyle{plain}
\newtheorem{lemma}{Lemma}
\newtheorem{theorem}[lemma]{Theorem}
\newtheorem{proposition}[lemma]{Proposition}
\newtheorem{corollary}[lemma]{Corollary}

\theoremstyle{definition}
\newtheorem{definition}[lemma]{Definition}
\newtheorem{example}[lemma]{Example}

\theoremstyle{remark}
\newtheorem{remark}[lemma]{Remark}

\newcommand{\panelwidth}{0.62\linewidth}
\pgfplotsset{
	plotsingle/.style={width=0.72\linewidth, height=0.45\linewidth},
	plotpair/.style={width=\linewidth, height=0.75\linewidth},
}

\title{Bilinearity Preserving Algebraic Flux Correction}
\author{Sarthak Sourav Dash\footnote{Department of Mathematics, Indian Institute of Technology Gandhinagar, Palaj, Gandhinagar, 382355, Gujarat, India \texttt{sarthak.dash@iitgn.ac.in}}, Abhinav Jha\footnote{Department of Mathematics, Indian Institute of Technology Gandhinagar, Palaj, Gandhinagar, 382355, Gujarat, India \texttt{abhinav.jha@iitgn.ac.in}}}

\date{}

\pgfplotstableread[row sep=\\]{
	x        Galerkin   LowOrder   Kuzmin     BJK        BJK()        \\
	0.0000   1.000000   1.000000   1.000000   1.000000   1.000000  \\
	0.0154   0.999998   1.000000   0.999998   1.000000   1.000000  \\
	0.0308   1.000004   1.000000   1.000000   1.000000   1.000000  \\
	0.0462   1.000000   1.000000   1.000007   1.000000   1.000000  \\
	0.0615   0.999993   1.000000   0.999993   1.000000   1.000000  \\
	0.0769   1.000007   1.000000   0.999997   1.000000   1.000000  \\
	0.0923   1.000003   0.999999   1.000013   1.000000   1.000000  \\
	0.1077   0.999982   0.999997   0.999989   1.000000   1.000000  \\
	0.1231   1.000021   0.999993   0.999996   1.000000   1.000000  \\
	0.1385   0.999990   0.999984   1.000017   1.000000   1.000000  \\
	0.1538   0.999999   0.999962   0.999986   0.999999   1.000000  \\
	0.1692   0.999991   0.999914   0.999998   0.999998   1.000000  \\
	0.1846   1.000048   0.999811   1.000018   0.999996   0.999999  \\
	0.2000   0.999900   0.999605   0.999981   0.999991   0.999999  \\
	0.2154   1.000147   0.999207   1.000005   0.999984   0.999997  \\
	0.2308   0.999804   0.998474   1.000015   0.999971   0.999995  \\
	0.2462   1.000290   0.997186   0.999970   0.999950   0.999990  \\
	0.2615   0.999514   0.995024   1.000024   0.999917   0.999984  \\
	0.2769   1.000831   0.991554   1.000014   0.999868   0.999973  \\
	0.2923   0.998635   0.986220   0.999928   0.999796   0.999958  \\
	0.3077   1.002157   0.978353   1.000079   0.999696   0.999936  \\
	0.3231   0.996637   0.967200   1.000134   0.999557   0.999905  \\
	0.3385   1.005262   0.951978   0.999338   0.999369   0.999863  \\
	0.3538   0.991757   0.931944   0.997119   0.999117   0.999808  \\
	0.3692   1.012785   0.906473   0.992658   0.998780   0.999734  \\
	0.3846   0.980553   0.875141   0.984615   0.998323   0.999637  \\
	0.4000   1.028793   0.837796   0.970883   0.997675   0.999510  \\
	0.4154   0.958847   0.794601   0.948405   0.996635   0.999338  \\
	0.4308   1.055355   0.746052   0.913083   0.994387   0.999087  \\
	0.4462   0.931398   0.692956   0.859875   0.987848   0.998593  \\
	0.4615   1.058284   0.636384   0.783168   0.967076   0.995842  \\
	0.4769   0.950105   0.577591   0.677534   0.899744   0.973414  \\
	0.4923   0.610789   0.517929   0.538881   0.680370   0.787925  \\
	0.5077   0.275041   0.458744   0.392251   0.210852   0.119812  \\
	0.5231  -0.062773   0.401298   0.271935   0.066950   0.015363  \\
	0.5385   0.055817   0.346686   0.183093   0.022764   0.002827  \\
	0.5538  -0.031425   0.295792   0.119877   0.009097   0.001254  \\
	0.5692   0.019815   0.249259   0.076354   0.004741   0.000878  \\
	0.5846  -0.012107   0.207483   0.047259   0.003139   0.000664  \\
	0.6000   0.007417   0.170628   0.028307   0.002306   0.000513  \\
	0.6154  -0.004508   0.138657   0.016243   0.001755   0.000399  \\
	0.6308   0.002755   0.111365   0.008738   0.001354   0.000312  \\
	0.6462  -0.001720   0.088424   0.004207   0.001050   0.000243  \\
	0.6615   0.001096   0.069426   0.001620   0.000815   0.000188  \\
	0.6769  -0.000685   0.053916   0.000322   0.000630   0.000145  \\
	0.6923   0.000396   0.041425  -0.000126   0.000486   0.000112  \\
	0.7077  -0.000209   0.031498  -0.000063   0.000373   0.000085  \\
	0.7231   0.000119   0.023708   0.000053   0.000284   0.000064  \\
	0.7385  -0.000095   0.017669   0.000008   0.000215   0.000048  \\
	0.7538   0.000087   0.013043  -0.000030   0.000162   0.000036  \\
	0.7692  -0.000061   0.009538   0.000013   0.000121   0.000027  \\
	0.7846   0.000019   0.006912   0.000013   0.000090   0.000019  \\
	0.8000   0.000011   0.004965  -0.000018   0.000066   0.000014  \\
	0.8154  -0.000012   0.003536   0.000003   0.000048   0.000010  \\
	0.8308  -0.000007   0.002498   0.000011   0.000035   0.000007  \\
	0.8462   0.000022   0.001750  -0.000010   0.000025   0.000005  \\
	0.8615  -0.000017   0.001217  -0.000003   0.000018   0.000004  \\
	0.8769  -0.000000   0.000839   0.000010   0.000013   0.000002  \\
	0.8923   0.000012   0.000575  -0.000006   0.000009   0.000002  \\
	0.9077  -0.000010   0.000391  -0.000003   0.000006   0.000001  \\
	0.9231  -0.000001   0.000264   0.000007   0.000004   0.000001  \\
	0.9385   0.000008   0.000177  -0.000004   0.000003   0.000001  \\
	0.9538  -0.000005   0.000118  -0.000002   0.000002   0.000000  \\
	0.9692  -0.000000   0.000078   0.000003   0.000001   0.000000  \\
	0.9846   0.000002   0.000051  -0.000001   0.000001   0.000000  \\
	1.0000   0.000000   0.000000   0.000000   0.000000   0.000000  \\
}\cutlinedata
\pgfplotstableread[row sep=\\]{
	x        Galerkin   LowOrder   Kuzmin     BJK        BJK()         \\
	0.0000   1.000000   1.000000   1.000000   1.000000   1.000000   \\
	0.0154   0.999998   1.000000   0.999998   1.000000   1.000000   \\
	0.0308   1.000004   1.000000   1.000000   1.000000   1.000000   \\
	0.0462   1.000000   1.000000   1.000007   1.000000   1.000000   \\
	0.0615   0.999993   1.000000   0.999993   1.000000   1.000000   \\
	0.0769   1.000007   1.000000   0.999997   1.000000   1.000000   \\
	0.0923   1.000003   0.999999   1.000013   1.000000   1.000000   \\
	0.1077   0.999982   0.999997   0.999989   1.000000   1.000000   \\
	0.1231   1.000021   0.999993   0.999996   1.000000   1.000000   \\
	0.1385   0.999990   0.999984   1.000017   1.000000   1.000000   \\
	0.1538   0.999999   0.999962   0.999986   0.999999   1.000000   \\
	0.1692   0.999991   0.999914   0.999998   0.999998   1.000000   \\
	0.1846   1.000048   0.999811   1.000018   0.999996   0.999999   \\
	0.2000   0.999900   0.999605   0.999981   0.999991   0.999999   \\
	0.2154   1.000147   0.999207   1.000005   0.999984   0.999997   \\
	0.2308   0.999804   0.998474   1.000015   0.999971   0.999995   \\
	0.2462   1.000290   0.997186   0.999970   0.999950   0.999990   \\
	0.2615   0.999514   0.995024   1.000024   0.999917   0.999984   \\
	0.2769   1.000831   0.991554   1.000014   0.999868   0.999973   \\
	0.2923   0.998635   0.986220   0.999928   0.999796   0.999958   \\
	0.3077   1.002157   0.978353   1.000079   0.999696   0.999936   \\
	0.3231   0.996637   0.967200   1.000134   0.999557   0.999905   \\
	0.3385   1.005262   0.951978   0.999338   0.999369   0.999863   \\
	0.3538   0.991757   0.931944   0.997119   0.999117   0.999808   \\
	0.3692   1.012785   0.906473   0.992658   0.998780   0.999734   \\
	0.3846   0.980553   0.875141   0.984615   0.998323   0.999637   \\
	0.4000   1.028793   0.837796   0.970883   0.997675   0.999510   \\
	0.4154   0.958847   0.794601   0.948405   0.996635   0.999338   \\
	0.4308   1.055355   0.746052   0.913083   0.994387   0.999087   \\
	0.4462   0.931398   0.692956   0.859875   0.987848   0.998593   \\
	0.4615   1.058284   0.636384   0.783168   0.967076   0.995842   \\
	0.4769   0.950105   0.577591   0.677534   0.899744   0.973414   \\
	0.4923   0.610789   0.517929   0.538881   0.680370   0.787925   \\
	0.5077   0.275041   0.458744   0.392251   0.210852   0.119812   \\
	0.5231   -0.062773  0.401298   0.271935   0.066950   0.015363   \\
	0.5385   0.055817   0.346686   0.183093   0.022764   0.002827   \\
	0.5538   -0.031425  0.295792   0.119877   0.009097   0.001254   \\
	0.5692   0.019815   0.249259   0.076354   0.004741   0.000878   \\
	0.5846   -0.012107  0.207483   0.047259   0.003139   0.000664   \\
	0.6000   0.007417   0.170628   0.028307   0.002306   0.000513   \\
	0.6154   -0.004508  0.138657   0.016243   0.001755   0.000399   \\
	0.6308   0.002755   0.111365   0.008738   0.001354   0.000312   \\
	0.6462   -0.001720  0.088424   0.004207   0.001050   0.000243   \\
	0.6615   0.001096   0.069426   0.001620   0.000815   0.000188   \\
	0.6769   -0.000685  0.053916   0.000322   0.000630   0.000145   \\
	0.6923   0.000396   0.041425   -0.000126  0.000486   0.000112   \\
	0.7077   -0.000209  0.031498   -0.000063  0.000373   0.000085   \\
	0.7231   0.000119   0.023708   0.000053   0.000284   0.000064   \\
	0.7385   -0.000095  0.017669   0.000008   0.000215   0.000048   \\
	0.7538   0.000087   0.013043   -0.000030  0.000162   0.000036   \\
	0.7692   -0.000061  0.009538   0.000013   0.000121   0.000027   \\
	0.7846   0.000019   0.006912   0.000013   0.000090   0.000019   \\
	0.8000   0.000011   0.004965   -0.000018  0.000066   0.000014   \\
	0.8154   -0.000012  0.003536   0.000003   0.000048   0.000010   \\
	0.8308   -0.000007  0.002498   0.000011   0.000035   0.000007   \\
	0.8462   0.000022   0.001750   -0.000010  0.000025   0.000005   \\
	0.8615   -0.000017  0.001217   -0.000003  0.000018   0.000004   \\
	0.8769   -0.000000  0.000839   0.000010   0.000013   0.000002   \\
	0.8923   0.000012   0.000575   -0.000006  0.000009   0.000002   \\
	0.9077   -0.000010  0.000391   -0.000003  0.000006   0.000001   \\
	0.9231   -0.000001  0.000264   0.000007   0.000004   0.000001   \\
	0.9385   0.000008   0.000177   -0.000004  0.000003   0.000001   \\
	0.9538   -0.000005  0.000118   -0.000002  0.000002   0.000000   \\
	0.9692   -0.000000  0.000078   0.000003   0.000001   0.000000   \\
	0.9846   0.000002   0.000051   -0.000001  0.000001   0.000000   \\
	1.0000   0.000000   0.000000   0.000000   0.000000   0.000000   \\
}\cutlinezoomeddata

\pgfplotstableread[row sep=\\]{
	DOFs     LowOrder   Kuzmin     BJK        BJK()        \\
	121      0.723562   0.474817   0.274325   0.162309  \\
	256      0.590358   0.345918   0.182665   0.114892  \\
	441      0.511307   0.290164   0.137158   0.086500  \\
	676      0.443527   0.244782   0.109847   0.068508  \\
	961      0.406681   0.216760   0.091645   0.057522  \\
	1296     0.369361   0.191950   0.078628   0.049846  \\
	1681     0.347191   0.176577   0.068883   0.044102  \\
	2116     0.322634   0.161571   0.061294   0.040555  \\
	2601     0.308075   0.151807   0.055233   0.036496  \\
	3136     0.290038   0.141581   0.050268   0.032937  \\
	3721     0.279579   0.134186   0.046138   0.030068  \\
	4356     0.265982   0.126320   0.042679   0.027598  \\
	5041     0.257440   0.120436   0.039731   0.025441  \\
	5776     0.246771   0.114208   0.037167   0.023181  \\
	6561     0.240178   0.109523   0.034935   0.022048  \\
	7396     0.231175   0.104747   0.032958   0.020754  \\
	8281     0.225642   0.101130   0.031211   0.019420  \\
	9216     0.218425   0.097100   0.029640   0.018424  \\
	10201    0.213722   0.094038   0.028236   0.017393  \\
}\convergencedata

\pgfplotstableread[row sep=\\]{
	x         Galerkin   LowOrder   Kuzmin     BJK        BJK()         \\
	0.0000    1.000000   1.000000   1.000000   1.000000   1.000000   \\
	0.0019    0.981745   1.000000   1.000000   1.000000   1.000000   \\
	0.0054    0.992591   1.000000   1.000000   1.000000   1.000000   \\
	0.0099    0.979950   1.000000   1.000000   1.000000   1.000000   \\
	0.0153    0.985924   1.000000   1.000000   1.000000   1.000000   \\
	0.0213    0.989416   1.000000   1.000000   1.000000   1.000000   \\
	0.0280    0.988920   1.000000   1.000000   1.000000   1.000000   \\
	0.0353    1.011505   1.000000   1.000000   1.000000   1.000000   \\
	0.0432    0.994911   1.000000   1.000000   1.000000   1.000000   \\
	0.0515    1.029003   1.000000   1.000000   1.000000   1.000000   \\
	0.0603    0.986764   1.000000   1.000000   1.000000   1.000000   \\
	0.0696    1.026368   1.000000   1.000000   1.000000   1.000000   \\
	0.0793    0.969247   1.000000   1.000000   1.000000   1.000000   \\
	0.0894    1.017930   1.000000   1.000000   1.000000   1.000000   \\
	0.1000    0.972245   1.000000   1.000000   1.000000   1.000000   \\
	0.1109    1.018335   1.000000   1.000000   1.000000   1.000000   \\
	0.1221    0.995731   1.000000   1.000000   1.000000   1.000000   \\
	0.1338    1.003142   1.000000   1.000000   1.000000   1.000000   \\
	0.1457    1.012227   1.000000   1.000000   1.000000   1.000000   \\
	0.1580    0.969539   0.999999   1.000000   1.000000   1.000000   \\
	0.1707    1.034254   0.999997   1.000000   1.000000   1.000000   \\
	0.1836    0.957615   0.999990   1.000000   1.000000   1.000000   \\
	0.1969    1.054943   0.999972   1.000000   1.000000   1.000000   \\
	0.2105    0.956233   0.999923   1.000000   1.000000   1.000000   \\
	0.2244    1.026303   0.999808   1.000000   1.000000   1.000000   \\
	0.2385    0.977070   0.999550   1.000000   1.000000   1.000000   \\
	0.2530    0.993524   0.999009   1.000000   1.000000   1.000000   \\
	0.2677    1.046592   0.997943   1.000000   1.000000   1.000000   \\
	0.2827    0.960096   0.995962   1.000000   1.000000   1.000000   \\
	0.2980    1.031484   0.992482   1.000000   1.000000   1.000000   \\
	0.3136    0.970510   0.986690   1.000000   1.000000   1.000000   \\
	0.3294    0.923860   0.977534   0.999999   1.000000   1.000000   \\
	0.3454    1.259689   0.963762   0.999995   1.000000   1.000000   \\
	0.3617    0.569133   0.944010   0.999981   1.000000   1.000000   \\
	0.3783    1.734144   0.916951   0.999927   1.000000   1.000000   \\
	0.3951   -0.255754   0.881492   0.999730   0.999997   1.000000   \\
	0.4122    2.762628   0.836976   0.999017   0.999977   0.999998   \\
	0.4295   -1.302180   0.783362   0.996490   0.999829   0.999972   \\
	0.4470    4.415169   0.721341   0.987695   0.998730   0.999677   \\
	0.4648   -4.062052   0.652349   0.957570   0.990728   0.996305   \\
	0.4827    6.973177   0.578476   0.855982   0.933437   0.958303   \\
	0.5010   -3.039827   0.502266   0.518345   0.528870   0.533488   \\
	0.5194   -5.264285   0.426880   0.155240   0.074881   0.047825   \\
	0.5381   -0.054405   0.355273   0.045893   0.010463   0.004248   \\
	0.5569   -0.107671   0.289475   0.013395   0.001443   0.000374   \\
	0.5760   -0.028804   0.230899   0.003861   0.000197   0.000033   \\
	0.5953    0.221872   0.180306   0.001099   0.000026   0.000003   \\
	0.6149   -0.209504   0.137860   0.000309   0.000004   0.000000   \\
	0.6346    0.027685   0.103227   0.000086   0.000000   0.000000   \\
	0.6545    0.089028   0.075718   0.000024   0.000000   0.000000   \\
	0.6747   -0.078351   0.054423   0.000006   0.000000   0.000000   \\
	0.6950    0.036920   0.038344   0.000002   0.000000   0.000000   \\
	0.7155   -0.020115   0.026491   0.000000   0.000000   0.000000   \\
	0.7363    0.004703   0.017954   0.000000   0.000000   0.000000   \\
	0.7572    0.022160   0.011941   0.000000   0.000000  -0.000000   \\
	0.7783   -0.035706   0.007797   0.000000   0.000000   0.000000   \\
	0.7997    0.020751   0.005000   0.000000   0.000000  -0.000000   \\
	0.8212    0.005125   0.003150   0.000000   0.000000  -0.000000   \\
	0.8429   -0.017981   0.001951   0.000000   0.000000   0.000000   \\
	0.8648    0.013343   0.001188   0.000000   0.000000   0.000000   \\
	0.8869   -0.002317   0.000711   0.000000   0.000000   0.000000   \\
	0.9091   -0.004675   0.000419   0.000000   0.000000  -0.000000   \\
	0.9316    0.005128   0.000243   0.000000   0.000000  -0.000000   \\
	0.9542   -0.002026   0.000139   0.000000   0.000000  -0.000000   \\
	0.9770   -0.000408   0.000079   0.000000   0.000000  -0.000000   \\
	1.0000    0.000000   0.000000   0.000000   0.000000   0.000000   \\
}\cutlinedatanu

\pgfplotstableread[row sep=\\]{
	x         Galerkin   LowOrder   Kuzmin     BJK        BJK()         \\
	0.3454    1.259689   0.963762   0.999995   1.000000   1.000000   \\
	0.3617    0.569133   0.944010   0.999981   1.000000   1.000000   \\
	0.3783    1.734144   0.916951   0.999927   1.000000   1.000000   \\
	0.3951   -0.255754   0.881492   0.999730   0.999997   1.000000   \\
	0.4122    2.762628   0.836976   0.999017   0.999977   0.999998   \\
	0.4295   -1.302180   0.783362   0.996490   0.999829   0.999972   \\
	0.4470    4.415169   0.721341   0.987695   0.998730   0.999677   \\
	0.4648   -4.062052   0.652349   0.957570   0.990728   0.996305   \\
	0.4827    6.973177   0.578476   0.855982   0.933437   0.958303   \\
	0.5010   -3.039827   0.502266   0.518345   0.528870   0.533488   \\
	0.5194   -5.264285   0.426880   0.155240   0.074881   0.047825   \\
	0.5381   -0.054405   0.355273   0.045893   0.010463   0.004248   \\
	0.5569   -0.107671   0.289475   0.013395   0.001443   0.000374   \\
	0.5760   -0.028804   0.230899   0.003861   0.000197   0.000033   \\
	0.5953    0.221872   0.180306   0.001099   0.000026   0.000003   \\
	0.6149   -0.209504   0.137860   0.000309   0.000004   0.000000   \\
	0.6346    0.027685   0.103227   0.000086   0.000000   0.000000   \\
	0.6545    0.089028   0.075718   0.000024   0.000000   0.000000   \\
}\cutlinezoomeddatanu

\pgfplotstableread[row sep=\\]{
	DOFs      LowOrder   Kuzmin     BJK        BJK()         \\
	121       0.660683   0.302928   0.210472   0.200037   \\
	256       0.524355   0.210982   0.141760   0.134375   \\
	441       0.482192   0.170370   0.111608   0.105426   \\
	676       0.419172   0.136788   0.088804   0.083855   \\
	961       0.375889   0.114282   0.073738   0.069602   \\
	1296      0.345222   0.098153   0.063043   0.059490   \\
	1681      0.319230   0.086019   0.055057   0.051942   \\
	2116      0.298782   0.076562   0.048866   0.046093   \\
	2601      0.282433   0.068981   0.043927   0.041428   \\
	3136      0.275359   0.063725   0.040479   0.038169   \\
	3721      0.262089   0.058388   0.037034   0.034916   \\
	4356      0.250579   0.053876   0.034129   0.032174   \\
	5041      0.240551   0.050012   0.031647   0.029831   \\
	5776      0.231242   0.046665   0.029501   0.027806   \\
	6561      0.223168   0.043737   0.027627   0.026038   \\
	7396      0.219876   0.041554   0.026223   0.024713   \\
	8281      0.213030   0.039218   0.024732   0.023307   \\
	9216      0.206565   0.037130   0.023402   0.022052   \\
	10201     0.200679   0.035253   0.022207   0.020926   \\
}\convergencedatanu

\pgfplotstableread[row sep=\\]{
	x        mu_1       mu_1.41421 mu_3       mu_5       mu_10      mu_50      mu_200     \\
	0.0000   1.000000   1.000000   1.000000   1.000000   1.000000   1.000000   1.000000   \\
	0.0002   1.000000   1.000000   1.000000   1.000000   1.000000   1.000000   1.000000   \\
	0.0009   1.000000   1.000000   1.000000   1.000000   1.000000   1.000000   1.000000   \\
	0.0021   1.000000   1.000000   1.000000   1.000000   1.000000   1.000000   1.000000   \\
	0.0038   1.000000   1.000000   1.000000   1.000000   1.000000   1.000000   1.000000   \\
	0.0059   1.000000   1.000000   1.000000   1.000000   1.000000   1.000000   1.000000   \\
	0.0085   1.000000   1.000000   1.000000   1.000000   1.000000   1.000000   1.000000   \\
	0.0116   1.000000   1.000000   1.000000   1.000000   1.000000   1.000000   1.000000   \\
	0.0151   1.000000   1.000000   1.000000   1.000000   1.000000   1.000000   1.000000   \\
	0.0192   1.000000   1.000000   1.000000   1.000000   1.000000   1.000000   1.000000   \\
	0.0237   1.000000   1.000000   1.000000   1.000000   1.000000   1.000000   1.000000   \\
	0.0286   1.000000   1.000000   1.000000   1.000000   1.000000   1.000000   1.000000   \\
	0.0341   1.000000   1.000000   1.000000   1.000000   1.000000   1.000000   1.000000   \\
	0.0400   1.000000   1.000000   1.000000   1.000000   1.000000   1.000000   1.000000   \\
	0.0464   1.000000   1.000000   1.000000   1.000000   1.000000   1.000000   1.000000   \\
	0.0533   1.000000   1.000000   1.000000   1.000000   1.000000   1.000000   1.000000   \\
	0.0606   1.000000   1.000000   1.000000   1.000000   1.000000   1.000000   1.000000   \\
	0.0684   1.000000   1.000000   1.000000   1.000000   1.000000   1.000000   1.000000   \\
	0.0767   1.000000   1.000000   1.000000   1.000000   1.000000   1.000000   1.000000   \\
	0.0854   1.000000   1.000000   1.000000   1.000000   1.000000   1.000000   1.000000   \\
	0.0947   1.000000   1.000000   1.000000   1.000000   1.000000   1.000000   1.000000   \\
	0.1044   1.000000   1.000000   1.000000   1.000000   1.000000   1.000000   1.000000   \\
	0.1146   1.000000   1.000000   1.000000   1.000000   1.000000   1.000000   1.000000   \\
	0.1252   1.000000   1.000000   1.000000   1.000000   1.000000   1.000000   1.000000   \\
	0.1363   1.000000   1.000000   1.000000   1.000000   1.000000   1.000000   1.000000   \\
	0.1479   1.000000   1.000000   1.000000   1.000000   1.000000   1.000000   1.000000   \\
	0.1600   1.000000   1.000000   1.000000   1.000000   1.000000   1.000000   1.000000   \\
	0.1725   1.000000   1.000000   1.000000   1.000000   1.000000   1.000000   1.000000   \\
	0.1856   1.000000   1.000000   1.000000   1.000000   1.000000   1.000000   1.000000   \\
	0.1991   1.000000   1.000000   1.000000   1.000000   1.000000   1.000000   1.000000   \\
	0.2130   1.000000   1.000000   1.000000   1.000000   1.000000   1.000000   1.000000   \\
	0.2275   1.000000   1.000000   1.000000   1.000000   1.000000   1.000000   1.000000   \\
	0.2424   1.000000   1.000000   1.000000   1.000000   1.000000   1.000000   1.000000   \\
	0.2578   1.000000   1.000000   1.000000   1.000000   1.000000   1.000000   1.000000   \\
	0.2736   1.000000   1.000000   1.000000   1.000000   1.000000   1.000000   1.000000   \\
	0.2899   1.000000   1.000000   1.000000   1.000000   1.000000   1.000000   1.000000   \\
	0.3067   1.000000   1.000000   1.000000   1.000000   1.000000   1.000000   1.000002   \\
	0.3240   1.000000   1.000000   1.000000   1.000000   1.000000   1.000000   1.000000   \\
	0.3418   1.000000   1.000000   1.000000   1.000000   1.000000   1.000000   1.000008   \\
	0.3600   0.999999   1.000000   1.000000   1.000000   1.000000   1.000000   1.000023   \\
	0.3787   0.999992   0.999997   1.000000   1.000000   1.000000   1.000000   0.999976   \\
	0.3979   0.999948   0.999977   0.999998   1.000000   1.000000   1.000000   0.999809   \\
	0.4175   0.999662   0.999824   0.999971   0.999994   0.999999   1.000000   0.999817   \\
	0.4376   0.997838   0.998674   0.999659   0.999890   0.999981   1.000000   1.003897   \\
	0.4582   0.986476   0.990251   0.996069   0.998152   0.999420   0.999972   1.005343   \\
	0.4793   0.917082   0.929610   0.955303   0.969346   0.982808   0.996204   1.004368   \\
	0.5008   0.502949   0.503270   0.504085   0.504690   0.505487   0.503996   0.504362   \\
	0.5228   0.083853   0.071231   0.045304   0.031106   0.017468   0.003864  -0.005114   \\
	0.5453   0.013723   0.009903   0.004004   0.001887   0.000594   0.000029  -0.006121   \\
	0.5683   0.002206   0.001353   0.000348   0.000113   0.000020   0.000000  -0.004553   \\
	0.5917   0.000348   0.000182   0.000030   0.000007   0.000001   0.000000   0.000208   \\
	0.6156   0.000054   0.000024   0.000003   0.000000   0.000000   0.000000   0.000232   \\
	0.6400   0.000008   0.000003   0.000000   0.000000   0.000000   0.000000   0.000067   \\
	0.6649   0.000001   0.000000   0.000000   0.000000   0.000000   0.000000  -0.000014   \\
	0.6902   0.000000   0.000000   0.000000   0.000000   0.000000   0.000000  -0.000008   \\
	0.7160   0.000000   0.000000   0.000000   0.000000   0.000000   0.000000   0.000001   \\
	0.7422   0.000000   0.000000   0.000000  -0.000000   0.000000   0.000000  -0.000001   \\
	0.7690   0.000000   0.000000   0.000000  -0.000000  -0.000000   0.000000  -0.000001   \\
	0.7962   0.000000   0.000000   0.000000  -0.000000  -0.000000  -0.000000   0.000000   \\
	0.8239   0.000000   0.000000  -0.000000   0.000000  -0.000000  -0.000000  -0.000000   \\
	0.8521   0.000000   0.000000  -0.000000   0.000000  -0.000000  -0.000000  -0.000000   \\
	0.8807   0.000000   0.000000  -0.000000   0.000000   0.000000  -0.000000  -0.000000   \\
	0.9098   0.000000   0.000000   0.000000  -0.000000   0.000000  -0.000000  -0.000000   \\
	0.9394   0.000000   0.000000   0.000000  -0.000000   0.000000  -0.000000  -0.000000   \\
	0.9695   0.000000   0.000000   0.000000  -0.000000  -0.000000  -0.000000  -0.000000   \\
	1.0000   0.000000   0.000000   0.000000   0.000000   0.000000   0.000000   0.000000   \\
}\cutlinedatamu

\pgfplotstableread[row sep=\\]{
	x        mu_1       mu_1.41421 mu_3       mu_5       mu_10      mu_50      mu_200     \\
	0.3418   1.000000   1.000000   1.000000   1.000000   1.000000   1.000000   1.000008   \\
	0.3600   0.999999   1.000000   1.000000   1.000000   1.000000   1.000000   1.000023   \\
	0.3787   0.999992   0.999997   1.000000   1.000000   1.000000   1.000000   0.999976   \\
	0.3979   0.999948   0.999977   0.999998   1.000000   1.000000   1.000000   0.999809   \\
	0.4175   0.999662   0.999824   0.999971   0.999994   0.999999   1.000000   0.999817   \\
	0.4376   0.997838   0.998674   0.999659   0.999890   0.999981   1.000000   1.003897   \\
	0.4582   0.986476   0.990251   0.996069   0.998152   0.999420   0.999972   1.005343   \\
	0.4793   0.917082   0.929610   0.955303   0.969346   0.982808   0.996204   1.004368   \\
	0.5008   0.502949   0.503270   0.504085   0.504690   0.505487   0.503996   0.504362   \\
	0.5228   0.083853   0.071231   0.045304   0.031106   0.017468   0.003864  -0.005114   \\
	0.5453   0.013723   0.009903   0.004004   0.001887   0.000594   0.000029  -0.006121   \\
	0.5683   0.002206   0.001353   0.000348   0.000113   0.000020   0.000000  -0.004553   \\
	0.5917   0.000348   0.000182   0.000030   0.000007   0.000001   0.000000   0.000208   \\
	0.6156   0.000054   0.000024   0.000003   0.000000   0.000000   0.000000   0.000232   \\
	0.6400   0.000008   0.000003   0.000000   0.000000   0.000000   0.000000   0.000067   \\
	0.6649   0.000001   0.000000   0.000000   0.000000   0.000000   0.000000  -0.000014   \\
}\cutlinezoomeddatamu

\pgfplotstableread[row sep=\\]{
	DOFs      mu_1        mu_1_41421  mu_3        mu_5        mu_10       mu_50       mu_200      \\
	121       0.261283    0.255126    0.243055    0.236801    0.231042    0.225540    0.221803    \\
	256       0.184866    0.180151    0.170972    0.166219    0.161821    0.157578    0.154780    \\
	441       0.132798    0.129280    0.122517    0.119060    0.115883    0.112839    0.110836    \\
	676       0.109947    0.106869    0.101006    0.098045    0.095364    0.092852    0.091199    \\
	961       0.089199    0.086681    0.081893    0.079475    0.077285    0.075232    0.073863    \\
	1296      0.077979    0.075766    0.071603    0.069510    0.067608    0.065806    0.064652    \\
	1681      0.066957    0.065043    0.061426    0.059618    0.057979    0.056430    0.055450    \\
	2116      0.060580    0.058833    0.055537    0.053876    0.052369    0.050955    0.050038    \\
	2601      0.053693    0.052138    0.049207    0.047737    0.046399    0.045144    0.044343    \\
	3136      0.049454    0.048013    0.045304    0.043947    0.042724    0.041575    0.040841    \\
	3721      0.044773    0.043465    0.041005    0.039774    0.038663    0.037622    0.036971    \\
	4356      0.041814    0.040589    0.038286    0.037134    0.036091    0.035109    0.034513    \\
	5041      0.038415    0.037287    0.035168    0.034108    0.033150    0.032248    0.031679    \\
	5776      0.036204    0.035136    0.033133    0.032133    0.031230    0.030385    0.029818    \\
	6561      0.034235    0.033224    0.031328    0.030382    0.029526    0.028721    0.028201    \\
	7396      0.031927    0.030983    0.029213    0.028330    0.027531    0.026781    0.026273    \\
	8281      0.030383    0.029482    0.027794    0.026952    0.026192    0.025480    0.025000    \\
	9216      0.028552    0.027705    0.026117    0.025325    0.024610    0.023940    0.023494    \\
	10201     0.027312    0.026500    0.024981    0.024223    0.023539    0.022897    0.022490    \\
}\convergencedatamudependency
\pgfplotstableread{
	DOFs    mu_1    mu_sqrt2  mu_3    mu_5    mu_10   mu_50   mu_200
	121     2165    2171        2228    2286    2656    4273    15000
	256     1987    2218        2719    2446    2820    5677    15000
	441     2175    2299        2374    2431    3108    7151    15000
	676     2238    2340        2899    3332    3513    8057    15000
	961     2352    2827        2615    3771    4348    9008    15000
	1296    2558    2546        3861    3811    4498    9979    15000
	1681    2580    3146        3793    3886    4873    10540   15000
	2116    2327    2552        3620    3904    5171    11696   15000
	2601    2337    2522        2681    3356    5253    12442   15000
	3136    2227    2549        2682    3520    6057    13179   15000
	3721    2231    2538        2687    3386    6751    13501   15000
	4356    2320    2331        2793    3908    7112    13397   15000
	5041    768     2407        2823    3890    7909    12339   15000
	5776    2478    2578        3403    4509    8831    10948   15000
	6561    2495    2664        3710    4821    9022    12510   15000
	7396    2511    2680        3972    5063    9143    13895   15000
	8281    2430    2673        4156    5225    9167    14900   15000
	9216    2445    2669        4073    5585    9698    15000   15000
	10201   2452    3298        4673    5541    9573    15000   15000
}\dynamicdampingiterations
\begin{document}
\maketitle

\begin{abstract}
Algebraic flux correction schemes for convection--diffusion--reaction equations rely on linearity preservation to avoid unnecessary artificial diffusion: the limiter is inactive whenever the discrete solution is affine. On quadrilateral and hexahedral meshes, this is insufficient, since the finite element space also contains bilinear functions that are reproduced exactly on axis-aligned meshes. We show that a mesh patch admits bilinearity preservation if and only if the central node lies in the convex hull of its neighbours after lifting each neighbour by the product of its coordinate differences, and we derive a sharp explicit value for the limiter parameter on such patches when the cell edges are parallel to the coordinate axes.  We further characterize the meshes on which a mapped bilinear finite element space reproduces a bilinear function exactly, thereby establishing a fundamental limitation on what any limiter can achieve on general quadrilaterals. Numerical experiments confirm the theory: on axis-aligned grids, the proposed limiter reproduces a bilinear solution to the accuracy of the nonlinear iteration, whereas the linearity-preserving limiter does not. A sensitivity study shows that the parameter required for bilinearity preservation lies close to the upper end of the range for which the nonlinear problem can be solved by the fixed-point iteration considered here.
\end{abstract}

\textbf{Keywords}: convection--diffusion--reaction equations; algebraic flux correction; discrete maximum principle; linearity preservation; bilinear finite elements; monotone discretizations

\textbf{MSC Classification}: 65N30; 65N12; 65N15

\section{Introduction}

Convection--diffusion--reaction (CDR) equations describe the transport of a scalar
quantity: the temperature of a fluid, the concentration of a chemical species, a
pollutant, or a turbulence variable by a moving medium, together with its diffusive
spreading and its production or depletion through reaction.  In most applications the scalar
equation does not appear in isolation but as one component of a larger coupled system,
so that the qualitative properties of its discretization are inherited by the whole
model: a discrete solution exhibiting negative concentrations or negative turbulent
kinetic energies may render the coupled problem meaningless, or simply cause the
nonlinear solver to break down \cite{JR10}. Preserving, at the discrete level, the bounds that the
continuous problem satisfies is therefore not a cosmetic requirement but a
prerequisite for a usable simulation. These equations are defined by
\begin{equation}
	\begin{aligned}
		-\varepsilon \Delta u + \boldsymbol{b} \cdot \nabla u + \sigma u &= f && \text{in } \Omega, \\
		u &= u_{\mathrm{D}} && \text{on } \partial\Omega,
	\end{aligned}
	\label{MainEq}
\end{equation}
where $\Omega\subset \mathbb{R}^d$, $d\geq 1$ is a bounded connected domain with Lipschitz boundary. Here $\varepsilon>0$ denotes the diffusion coefficient, $\boldsymbol{b}$ is the convective field, $\sigma$ is the reaction term, $f$ is the source/sink term, and $u_{\mathrm{D}}$ is the Dirichlet boundary condition.

In the applications just mentioned the diffusion coefficient $\varepsilon$ is
typically very small in comparison with the magnitude of the convection field
$\boldsymbol{b}$ and the length scale of the domain ($L$), i.e., the problem is
\emph{convection dominated} ($\|\boldsymbol{b}\|_{\infty}L\gg \varepsilon$). The solution of Eq.~\eqref{MainEq} then generically develops
boundary and interior layers, which are narrow subregions across which the solution changes abruptly, while being smooth
elsewhere; resolving these layers on a uniform mesh is out of reach in any realistic
computation \cite{RST08}. 
It is classical that the standard Galerkin finite element method is inadequate in this
regime, the discrete solution is polluted by spurious oscillations which are not confined to a neighbourhood of the layers but spread over the entire domain. The algebraic reason is that the Galerkin matrix loses the $M$-matrix structure that the differential operator possesses through the maximum principle, so that the \emph{discrete maximum principle} (DMP), the discrete counterpart of the maximum principle satisfied by the solution of Eq.~\eqref{MainEq}, fails, \cite{BJK25}.

The standard remedy is stabilization. A large family of methods: streamline upwind
Petrov--Galerkin (SUPG) \cite{BH82},  continuous interior penalty
\cite{BE07}, 
and local projection stabilization \cite{Kn10} adds a consistent term to the variational
formulation that is \emph{linear} in the discrete solution. These methods provide
global stability and quasi-optimal error estimates in norms that measure the smooth
part of the solution, but they do not remove the local overshoots and undershoots in
the vicinity of layers, and their behaviour depends on user-chosen parameters. This limitation is structural rather than accidental: Godunov's classical result shows that a linear monotone discretization for a hyperbolic problem cannot achieve an accuracy higher than first order. Thus, in seeking to combine monotonicity with higher-order accuracy, one is naturally led to nonlinear discretizations, even when the underlying differential equation is linear.  This observation motivated  algebraic stabilizations considered in this work; see \cite{BJK24} for a recent survey of finite element methods respecting the DMP.

Among the nonlinear approaches, \emph{algebraic flux correction} (AFC) schemes have
proved particularly successful. Their roots lie in the flux-corrected transport
algorithms of Zalesak \cite{Zal79}, and their finite element form was
developed by Kuzmin and co-authors \cite{KM05, Ku06, Ku09}. Rather than modifying the variational formulation, AFC acts
directly on the algebraic system $A\boldsymbol{u}=\boldsymbol{g}$. One first adds a
symmetric artificial diffusion matrix which eliminates the positive off-diagonal
entries of $A$, producing a low-order operator that satisfies a DMP but is only
first-order accurate and smears layers heavily. One then reinstates as much of the
associated antidiffusive fluxes $f_{ij}=d_{ij}(u_j-u_i)$ as is compatible with
monotonicity, through solution-dependent limiters $\alpha_{ij}(\boldsymbol{u})\in[0,1]$.
The result is a nonlinear algebraic problem whose numerical analysis---solvability,
validity of the DMP, and error estimates---has been carried out only recently in
\cite{BJK16} and is presented systematically in the monograph
\cite{BJK25}.

A concept that has emerged as central in this analysis is \emph{linearity preservation}: the limiter should switch off completely, i.e.,\ no artificial diffusion should be introduced, whenever the discrete solution is affine on the patch surrounding a node. Its role is not merely aesthetic. Linearity preservation is what prevents the scheme from degrading the solution in the smooth, well-resolved part of
the domain, and both numerical evidence and the available analysis indicate that it is
closely tied to optimal convergence rates and to a sharp resolution of layers
\cite{BJK17,BJK25}. The limiter introduced by Barrenechea, John and Knobloch (henceforth the BJK limiter) satisfies the DMP on arbitrary meshes and for arbitrary data while being linearity preserving, the latter under an explicit geometric condition on the parameters $\mu_i$ entering its definition.

This brings us to the question addressed in the present paper. Quadrilateral and hexahedral meshes with $\mathbb{Q}_1$ elements are widely used in computational fluid dynamics: their tensor-product structure facilitates anisotropic and layer-adapted refinement, they can be aligned with the convection field, and they can achieve a given accuracy with fewer degrees of freedom than simplicial meshes. On such meshes, however, the local finite element space $\mathbb{Q}_1$ strictly contains $\mathbb{P}_1$, with the additional bilinear term $xy$ in two dimensions. Consequently, linearity preservation leaves one component of the $\mathbb{Q}_1$ space unprotected. This is particularly relevant because every function in $\mathbb{Q}_1$ is harmonic and hence satisfies the classical maximum principle, while on axis-aligned rectangular meshes such functions are represented exactly by the finite element space. These functions therefore provide a natural class on which a limiter should remain inactive. A merely linearity-preserving limiter, however, may still introduce artificial diffusion for such functions. The effect is visible in the problem with exact solution $u=10xy$ considered in Section~\ref{sec:numerics}: the BJK limiter produces an error of the same order as the discretization error, whereas the limiter proposed here reproduces the solution to machine precision.

Extending linearity preservation to bilinearity preservation is not a routine matter,
and the reason is instructive. In the affine case the underlying mechanism is
convexity: if the central node lies in the convex hull of its neighbours, then the
value of an affine function at that node lies between the minimum and the maximum of
its values at the neighbours, and consequently a finite parameter $\mu_i$ exists for
\emph{every} admissible patch. For bilinear functions this is false. For $u=-xy$ on a
symmetric four-quadrilateral patch the value at the central node is the largest value
on the whole patch (Example~\ref{ex:counterexample}), so no finite $\mu_i$ can exist.
Bilinearity preservation is thus attainable only on a proper subclass of meshes, and
identifying that subclass is the first task.

The results of this paper are the following.

\begin{enumerate}[leftmargin=*, itemsep=0.4em]

	\item We characterize the meshes on which bilinearity preservation can be required
	at all. A patch admits it precisely when it is \emph{bilinearly convex}: lifting
	each surrounding node by the product of its two coordinates relative to the central
	node, the central node must lie in the convex hull of the lifted neighbours
	(Theorem~\ref{thm:BMPiffBC}). Unlike in the affine case, this excludes some
	perfectly ordinary meshes. 

	\item We give a sharp explicit value for the limiter parameter on such patches, in
	closed form when the edges of the patch are parallel to the coordinate axes
	(Theorem~\ref{thm:mu4quad}). On a uniform mesh its value is $3$, against $\sqrt{2}$
	for linearity preservation alone; in three dimensions the corresponding value is
	$7$. The resulting scheme differs from the existing linearity-preserving one only
	through this constant, so its theory \cite{BJK25} and its implementation carry
	over unchanged. 

	\item We show that a mapped bilinear finite element space represents a bilinear
	function exactly if and only if the mesh cells are rectangles with edges parallel to
	the coordinate axes (Theorem~\ref{thm:exactrep}). This is a property of the space,
	not of the stabilization, and it bounds what any limiter can achieve on general
	quadrilaterals.

\end{enumerate}

The theory is confirmed on a problem with a bilinear exact solution, on a
three-dimensional problem exhibiting trilinearity preservation with optimal empirical
convergence orders, and on the interior-layer benchmark of Hughes, Mallet and Mizukami
\cite{HMM86}, where the proposed limiter smears the layer less than the
linearity-preserving limiter, the limiter of Kuzmin \cite{KM05} and the
unlimited low-order scheme.

\medskip
\noindent\textbf{Outline.} The paper is organized as follows. The remainder of this section states the model problem and fixes the notation. Section~\ref{sec:AFC} recalls the algebraic flux correction framework for the CDR equation, the matrix-based discrete maximum principles, and the BJK limiter together with the known results on DMP satisfaction and linearity preservation. Section~\ref{sec:bilinear} contains the main results: Subsection~\ref{sec:meshclass} classifies the mesh patches on which a bilinear maximum principle can hold and establishes its equivalence with bilinear convexity; Subsection~\ref{sec:explicitmu} derives the explicit bound for $\mu_i$ on such patches;  characterizes the meshes on which a bilinear function is exactly representable; and specializes the geometric parameter to axis-aligned patches in two and three dimensions and shows its optimality. Section~\ref{sec:numerics} presents the numerical experiments, and Section~\ref{sec:conclusion} summarizes the results and discusses open questions and extensions, in particular to time-dependent problems and to the virtual element framework.

\subsection{Preliminaries}
\label{sec:model}
We use the standard notation $L^2(\Omega)$, $H^k(\Omega)$, $W^{1,\infty}(\Omega)$ for the usual Lebesgue and Sobolev spaces, and $(\cdot,\cdot)$ for the inner product of $L^2(\Omega)$ as well as of $L^2(\Omega)^d$.

In Eq.\eqref{MainEq},  $\varepsilon>0$ and $\sigma\ge 0$ are constants,
$\boldsymbol{b}\in W^{1,\infty}(\Omega)^d$ with $\nabla\cdot\boldsymbol{b}=0$,
$f\in L^2(\Omega)$, and
$u_{\mathrm{D}}\in H^{1/2}(\partial\Omega)\cap C(\partial\Omega)$. 

Setting $V=H^1_0(\Omega)$, the variational formulation of \eqref{MainEq} reads: find
$u\in H^1(\Omega)$ with $u-u_{\mathrm{D},\mathrm{ext}}\in V$ such that
\begin{equation}
	a(u, v) = g(v) \qquad \forall\, v \in V,
	\label{WeakEq}
\end{equation}
where $u_{\mathrm{D},\mathrm{ext}}\in H^1(\Omega)$ is an extension of
$u_{\mathrm{D}}$, the bilinear form $a:V\times V\to\mathbb{R}$ is given by
\begin{equation*}
	a(u, v) = (\varepsilon \nabla u, \nabla v) + (\boldsymbol{b} \cdot \nabla u + \sigma u, v),
\end{equation*}
and the linear form $g:V\to\mathbb{R}$ by $g(v)=(f,v)$. Under the stated assumptions
$a$ is bounded and elliptic on $V$, so that the Lax--Milgram theorem yields existence
and uniqueness of the solution of Eq.~\eqref{WeakEq}; see, e.g.,
\cite{RST08}. We assume throughout that these criteria are met.

We will consider an admissible triangulation $\mathcal{T}_h$ of the domain $\Omega$ and let the patch surrounding the node $i$ be

\begin{equation}
\omega_i = \bigcup \{K \in \mathcal{T}_h : \mathbf{x}_i \in K\}.
\end{equation}
where $h = \max\limits_{K \in \mathcal{T}_h } \text{diam}(K) $ and the surrounding nodes for a node $i$ as:
\begin{equation}
	S_i = \{ j \in \{1, \dots, n\} \setminus \{i\} : j \in \omega_i \}, \quad i = 1, \dots, m, \label{eq:S_i}
\end{equation}
where $m$ are the number of non-Dirichlet nodes and $n$ are the total number of nodes.

\section{Algebraic Flux Correction Schemes}\label{sec:AFC}

\subsection{Algebraic Flux Correction for CDR equations}
Suppose that the weak solution $u$ of Eq.~\eqref{WeakEq} (see \cite[Sec.~2.2.1]{RST08}) exists in the space $V=H_0^1(\Omega).$ Let $V_h\subset V$ be a conforming finite element subspace spanned by the basis functions $\{\phi_j\}_{j=1}^n$. The Galerkin finite element discretization of Eq.~\eqref{WeakEq} seeks $u_h=\sum_{j=1}^n u_j\phi_j\in V_h$ such that
\begin{align}
\sum_{j=1}^{n}a(\phi_j,\phi_i)u_j &= g(\phi_i),
&& i\in\{1,\ldots,m\},\
u_i &= u_i^b,
&& i\in\{m+1,\ldots,n\}.
\end{align}
Here, the bilinear form $a:V_h\times V_h\to\mathbb{R}$ and the linear form $g:V_h\to\mathbb{R}$ are the restrictions of the corresponding forms in the weak formulation to $V_h$.

The boundary value $u_i^b$ is the value of $u_{\mathrm D}$ at the $i$th boundary node. Defining

$$
[A]_{ij}=a_{ij}=a(\phi_j,\phi_i),
\qquad
g_i=g(\phi_i),
$$

the discrete problem can be written in matrix form as
\begin{equation}
A\mathbf u=\mathbf g,
\label{eq:Lin_Sys}
\end{equation}
with the appropriate modifications to account for the Dirichlet boundary conditions. The resulting linear system is uniquely solvable, for example, if the bilinear form is positive definite on $V_h$, i.e.,

$$
a(v_h,v_h)>0 \qquad\forall\,v_h\in V_h\setminus\{0\}.
$$

The algebraic flux correction (AFC) approach modifies the standard discrete system by introducing an artificial diffusion matrix whose entries may depend on the discrete solution. Denoting these entries by $b_{ij}(\mathbf u)$, the modified system reads
\begin{align}
\sum_{j=1}^{n}\bigl(a_{ij}+b_{ij}(\mathbf u)\bigr)u_j
&=g_i,
&& i\in\{1,\ldots,m\},\label{eq:AFC_Sys}\
u_i&=u_i^b,
&& i\in\{m+1,\ldots,n\}.
\end{align}

To ensure that the added operator has the structure of a diffusion operator, the artificial diffusion matrix is required to satisfy
\begin{align*}
b_{ij}(\mathbf u)&=b_{ji}(\mathbf u),
&& i,j=1,\ldots,n,\
b_{ij}(\mathbf u)&\leq0,
&& i\neq j,\
\sum_{j=1}^{n}b_{ij}(\mathbf u)&=0,
&& i=1,\ldots,n.
\end{align*}

For a Galerkin discretization, it is often the case that
\begin{equation}
a_{ii}>0,\qquad\sum_{j=1}^{n}a_{ij}\geq0, \qquad i=1,\ldots,m.
\end{equation}
To guarantee a discrete maximum principle, the modified matrix must additionally satisfy suitable sign conditions at local extrema. Following \cite[Theorem~7.2]{BJK25}, we distinguish strict and nonstrict extrema.

\begin{definition}
The AFC system Eq.~\eqref{eq:AFC_Sys} is said to satisfy the matrix-based DMP property for \emph{strict extrema} if, for every $\mathbf u=(u_1,\ldots,u_n)^\top\in\mathbb R^n$ and every $i\in\{1,\ldots,m\}$, whenever $u_i$ is a strict local extremum with respect to the stencil $S_i$, i.e.,

$$
u_i>u_j\quad\forall\,j\in S_i \qquad\text{or}\qquad u_i<u_j\quad\forall\,j\in S_i,
$$

then

$$
a_{ij}+b_{ij}(\mathbf u)\leq0 \qquad\forall\,j\in S_i.
$$

\label{def:strict}
\end{definition}

\begin{definition}
The AFC system Eq.~\eqref{eq:AFC_Sys} is said to satisfy the matrix-based DMP property for \emph{nonstrict extrema} if, for every $\mathbf u=(u_1,\ldots,u_n)^\top\in\mathbb R^n$ and every $i\in\{1,\ldots,m\}$, whenever $u_i$ is a local extremum with respect to the stencil $S_i$, i.e.,

$$
u_i\geq u_j\quad\forall\,j\in S_i \qquad\text{or}\qquad u_i\leq u_j\quad\forall\,j\in S_i,
$$

then

$$
a_{ij}+b_{ij}(\mathbf u)\leq0 \qquad\forall\,j\in S_i\text{ with }u_j\neq u_i.
$$

\label{def:nonstrict}
\end{definition}

An AFC scheme with the BJK limiter provides one construction of the artificial diffusion matrix satisfying these conditions. By appropriately limiting the antidiffusive contributions, the resulting nonlinear system can preserve the discrete maximum principle while reducing the artificial diffusion introduced by the corresponding low-order scheme.

\subsection{AFC Scheme BJK Limiters}
The BJK limiter was introduced in Barrenechea et al. \cite{BJK17} designed to have properties like DMPs and linearity preservation for general meshes and data.
It starts by defining an artificial diffusion matrix $(d_{ij})_{i,j=1}^n$:
\begin{equation*}
	d_{ij} = d_{ji} = -\max \{a_{ij}, 0, a_{ji}\} \quad \forall i \neq j, \quad d_{ii} = -\sum_{j \neq i} d_{ij} \,. 
\end{equation*}

To reduce the amount of the artificial diffusion, the matrix $(a_{ij})_{i,j=1}^n$ defined in Eq.~\eqref{eq:Lin_Sys} is modified by setting
\[
a_{ji} := 0 \quad \text{if } a_{ij} < 0, \quad i = 1, \dots, m, \ j = m+1, \dots, n.
\]

This modification influences only the definition of the AFC matrix Eq.~\eqref{eq:AFC_Sys} through the artificial diffusion matrix $(d_{ij})_{i,j=1}^n$.

To define the limiter, let us first denote, for $i = 1, \dots, m$,
\begin{equation}
	u_i^{\max} = \max_{j \in S_i \cup \{i\}} u_j, \quad u_i^{\min} = \min_{j \in S_i \cup \{i\}} u_j, \quad q_i = \sum_{j \in S_i} d_{ij} , \label{def:minmax}
\end{equation}
and $s^+ = \max\{0,s\}$, $s^- = \min\{0,s\} \; \forall s\in \R$. Then, for $i = 1, \dots, m$, one computes
\begin{align}
	P_i^+ &= \sum_{j \in S_i} f_{ij}^+, & P_i^- &= \sum_{j \in S_i} f_{ij}^-,\label{eq:BJKInit}  \\
	Q_i^+ &= q_i (u_i - u_i^{\max}), & Q_i^- &= q_i (u_i - u_i^{\min}),  \\
	R_i^+ &= \min \left\{1, \frac{\mu_i Q_i^+}{P_i^+} \right\}, & R_i^- &= \min \left\{1, \frac{\mu_i Q_i^-}{P_i^-} \right\},\label{eq:Rpm}
\end{align}
where $\mu_i > 0$ are fixed constants and $f_{ij}$ are the anti-diffusive fluxes given by
\begin{equation*}
	f_{ij} = d_{ij} (u_j - u_i), \quad i, j = 1, \dots, n.
\end{equation*}

If $P_i^+$ or $P_i^-$ vanishes, one sets $R_i^+ = 1$ or $R_i^- = 1$, respectively. Furthermore,
\begin{equation}
	R_i^+ = 1, \quad R_i^- = 1, \quad i = m + 1, \dots, n. \label{eq:Def_1}
\end{equation}

Then, one again defines
\begin{equation*}
	\widetilde{\alpha}_{ij} = \begin{cases} 
		R_i^+ & \text{if } f_{ij} > 0, \\ 
		1 & \text{if } f_{ij} = 0, \\ 
		R_i^- & \text{if } f_{ij} < 0, 
	\end{cases} \quad i, j = 1, \dots, n. 
\end{equation*}

Then the symmetrization is performed and one finally sets
\begin{equation}
	\alpha_{ij} = \alpha_{ji} = \min \left\{\widetilde{\alpha}_{ij}, \widetilde{\alpha}_{ji}\right\} , \quad i, j = 1, \dots, n. \label{eq:Def_6}
\end{equation}
Its obvious that $\alpha_{ij} \in [0, 1]$ for all $i, j = 1, \dots, n$.

Rewriting the system of equations, one arrives at the nonlinear algebraic problem
\begin{align*}
	\sum_{j=1}^n a_{ij} u_j + \sum_{j=1}^n (1 - \alpha_{ij}(\mathbf{u})) d_{ij} (u_j - u_i) &= g_i, \quad i = 1, \dots, m,  \\
	u_i &= u_i^b,   \quad i = m + 1, \dots, n. 
\end{align*}

It is assumed that the functions $\alpha_{ij}(\mathbf{u})(u_j - u_i)$ continuously depend on $\mathbf{u} = (u_1, \dots, u_n)^\top \in \mathbb{R}^n$. 
One can write $(b_{ij}(\mathbf{u}))_{i,j=1}^n$ as
\begin{equation}
	b_{ij}(\mathbf{u}) = (1 - \alpha_{ij}(\mathbf{u})) d_{ij} \quad \forall i \neq j, \quad b_{ii}(\mathbf{u}) = -\sum_{j \neq i} b_{ij}(\mathbf{u}).
	\label{eq:AFCMat}
\end{equation}

\vspace{1em}

The solvability of the resulting nonlinear system and its convergence to the
analytic solution are established in \cite[Theorems~10.1 and 10.5]{BJK25}.
In particular, Theorem~10.5 provides the corresponding convergence estimate
in the norm specified there.

\subsection{DMP Satisfaction and Linearity Preservation}
Here we layout the results for DMP satisfaction and linearity preservation of the BJK limiter from \cite{BJK25} for completeness.

\begin{theorem}{(DMP Properties for the AFC Scheme with BJK Limiter)}\label{Thm:DMP_BJK}
	\textit{The system Eq.~\eqref{eq:AFC_Sys} with the AFC matrix defined using the BJK limiter given by Eq.~\eqref{eq:Def_1}--Eq.~\eqref{eq:Def_6} satisfies the DMP properties formulated in Definitions \ref{def:strict} and \ref{def:nonstrict} for the sets $S_i$ defined by Eq.~\eqref{eq:S_i} .}
\end{theorem}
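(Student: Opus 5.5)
We need to show that whenever $u_i$ is a (strict or nonstrict) local extremum with respect to $S_i$, we have $a_{ij}+b_{ij}(\mathbf u)\le 0$ for all $j\in S_i$ with $u_j\neq u_i$. We treat the case of a local maximum, $u_i\ge u_j$ for all $j\in S_i$; the minimum case follows by applying the same argument to $-\mathbf u$, which swaps $f_{ij}^\pm$, $Q_i^\pm$ and $R_i^\pm$. Since $u_i$ is a maximum over $S_i\cup\{i\}$, we have $u_i^{\max}=u_i$, so $Q_i^+=q_i(u_i-u_i^{\max})=0$. Consequently $R_i^+=\min\{1,\mu_iQ_i^+/P_i^+\}=0$ whenever $P_i^+>0$.

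The next step is to examine the signs of the fluxes. From the definition, $d_{ij}=-\max\{a_{ij},0,a_{ji}\}\le 0$ for $j\ne i$. Hence for $j\in S_i$ with $u_j<u_i$ we get $f_{ij}=d_{ij}(u_j-u_i)\ge 0$.
\begin{itemize}
\item If $d_{ij}=0$, then $a_{ij}\le 0$ and $b_{ij}=0$, so $a_{ij}+b_{ij}\le 0$ trivially.
\item If $d_{ij}<0$, then $f_{ij}>0$, hence $P_i^+>0$ and $R_i^+=0$. Therefore $\widetilde\alpha_{ij}=R_i^+=0$, and the symmetrization \eqref{eq:Def_6} gives $\alpha_{ij}=\min\{\widetilde\alpha_{ij},\widetilde\alpha_{ji}\}=0$.
\end{itemize}
In the second case, \eqref{eq:AFCMat} yields $b_{ij}=d_{ij}\le -a_{ij}$, because $-d_{ij}=\max\{a_{ij},0,a_{ji}\}\ge a_{ij}$. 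Thus $a_{ij}+b_{ij}\le 0$, which covers the nonstrict case of Definition \ref{def:nonstrict}. The strict case of Definition \ref{def:strict} is contained in it, since there $u_j\ne u_i$ for all $j\in S_i$.

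The main subtlety concerns the modification $a_{ji}:=0$ for Dirichlet columns, and we must make sure it does not break the inequality $-d_{ij}\ge a_{ij}$ for $i\le m$. The modification only changes $a_{ji}$ with $j>m$, and only when $a_{ij}<0$. It affects $d_{ij}$ alone and never the row-$i$ entry $a_{ij}$ used in the system. Since $d_{ij}$ is still $-\max\{a_{ij},0,\cdot\}$ with the genuine $a_{ij}$, the bound $-d_{ij}\ge\max\{a_{ij},0\}$ survives.

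It remains to check that nothing in the argument depends on the convention $R_j^\pm=1$ for Dirichlet nodes in \eqref{eq:Def_1}. The symmetrization takes a minimum, so $\alpha_{ij}=0$ follows from $\widetilde\alpha_{ij}=0$ alone, and $\widetilde\alpha_{ji}$ plays no role. Finally, the degenerate case $P_i^+=0$ sets $R_i^+=1$. This case only occurs when all $f_{ij}\le 0$, i.e.\ $d_{ij}=0$ for every $j$ with $u_j<u_i$, and that situation was already handled above.
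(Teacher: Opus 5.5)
Your proof is correct and complete. The paper gives no argument of its own and simply defers to \cite[Theorem~10.35]{BJK25}, which rests on the same direct check you carry out: at a local maximum $Q_i^+=0$ forces $R_i^+=0$, hence $\alpha_{ij}=0$ on every positive flux, hence $a_{ij}+b_{ij}=a_{ij}+d_{ij}\le 0$. You also handle correctly the two points that need care, namely the Dirichlet-column modification of $a_{ji}$ and the degenerate case $P_i^+=0$.
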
 

\begin{proof}
	See \cite[Theorem~10.35]{BJK25}.
\end{proof}

The above theorem implies the validity of DMPs without any additional assumptions on the matrix $(a_{ij})_{i,j=1}^n$. Thus, in particular, the DMPs hold for arbitrary meshes and data satisfying the general assumptions.

\begin{theorem}
	\textit{For any $i \in \{1, \dots, m\}$, let $\mu_i > 0$ be such that}
	\begin{equation}
		p_i - p_i^{\min} \le \mu_i (p_i^{\max} - p_i) \quad \forall p \in \mathbb{P}_1(\omega_i), \label{eq:condition}
	\end{equation}
	\textit{where $p_j = p(\mathbf{x}_j)$, $j = 1, \dots, n$, and $p_i^{\max} = \max\limits_{j \in S_i \cup \{i\}} p_j, \quad p_i^{\min} = \min\limits_{j \in S_i \cup \{i\}} p_j $,  are defined similar to Eq.~\eqref{def:minmax}. Then the nonlinear problem Eq.~\eqref{eq:AFC_Sys} with the AFC matrix Eq.~\eqref{eq:AFCMat} defined using the BJK limiter given by Eq.~\eqref{eq:BJKInit}--Eq.~\eqref{eq:Def_6} is linearity preserving.}
\label{Thm:LP}	
\end{theorem}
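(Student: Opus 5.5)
Linearity preservation here means that $\alpha_{ij}(\mathbf u)=1$ for every pair of neighbouring nodes $i,j$ whenever the vector $\mathbf u$ interpolates a function that is affine on the relevant patch. In that case $b_{ij}(\mathbf u)=0$ and no artificial diffusion is added. The plan is to fix $\mathbf u$ with $u_k=p(\mathbf x_k)$ for some $p\in\mathbb P_1(\omega_i)$ and show that $R_i^+=R_i^-=1$ at every non-Dirichlet node $i$ where $p$ is affine on $\omega_i$. By \eqref{eq:Def_1} the same holds at Dirichlet nodes. The symmetrization \eqref{eq:Def_6} then gives $\alpha_{ij}=1$ for all relevant pairs.

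The core estimate goes as follows. Since $d_{ij}\le 0$ for $j\ne i$, we have $q_i=\sum_{j\in S_i}d_{ij}\le 0$. For the positive part,
\begin{equation*}
P_i^+=\sum_{j\in S_i}\bigl(d_{ij}(u_j-u_i)\bigr)^+=\sum_{j\in S_i}|d_{ij}|\,(u_i-u_j)^+\le |q_i|\,(u_i-u_i^{\min}),
\end{equation*}
because each $u_i-u_j\le u_i-u_i^{\min}$ and $\sum_{j\in S_i}|d_{ij}|=|q_i|$. On the other side, $\mu_iQ_i^+=\mu_i q_i(u_i-u_i^{\max})=\mu_i|q_i|(u_i^{\max}-u_i)$. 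Hypothesis \eqref{eq:condition}, applied to $p$, gives $u_i-u_i^{\min}\le\mu_i(u_i^{\max}-u_i)$. Combining these yields $P_i^+\le\mu_iQ_i^+$. Hence either $P_i^+=0$, in which case $R_i^+=1$ by convention, or $R_i^+=\min\{1,\mu_iQ_i^+/P_i^+\}=1$.

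For $R_i^-$ the analogous estimate is $|P_i^-|\le|q_i|(u_i^{\max}-u_i)$, and we need $u_i^{\max}-u_i\le\mu_i(u_i-u_i^{\min})$. This follows from \eqref{eq:condition} applied to $-p$, which is again in $\mathbb P_1(\omega_i)$ and swaps the roles of max and min. Both quotients then have numerator and denominator of the same sign, which needs a little care: $P_i^-\le 0$ and $Q_i^-\le 0$.

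The only delicate point is the bookkeeping of signs and degenerate cases. The case $q_i=0$ forces $P_i^\pm=0$ and is handled by convention. We must also note that $\widetilde\alpha_{ij}=1$ whenever $f_{ij}=0$. No analytic obstacle is expected; the essential input is just that the hypothesis is symmetric under $p\mapsto -p$.
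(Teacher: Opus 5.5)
Your argument is correct and is the standard one behind the result that the paper only cites (Lemma~10.9 of \cite{BJK25}). You bound $P_i^{+}\le |q_i|\,(u_i-u_i^{\min})$ and $|P_i^{-}|\le |q_i|\,(u_i^{\max}-u_i)$, then apply the hypothesis to both $p$ and $-p$ to obtain $R_i^{+}=R_i^{-}=1$.

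You should make your ``relevant pairs'' explicit. Because of the symmetrization $\alpha_{ij}=\min\{\widetilde\alpha_{ij},\widetilde\alpha_{ji}\}$, what you prove is $\alpha_{ij}=1$ whenever $u$ is affine on $\omega_i\cup\omega_j$ (or $j$ is a Dirichlet node), which covers globally affine $u$. It does not give the stronger reading in the paper's remark after the theorem ($b_{ij}(\mathbf u)=0$ for all $j\in S_i$ as soon as $u$ is affine on $\omega_i$), since $\widetilde\alpha_{ji}=R_j^{\pm}$ can vanish when $u_j$ is a local extremum on $S_j\cup\{j\}$ even though $u$ is affine on $\omega_i$.
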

\begin{proof}
  See \cite[Lemma~10.9]{BJK25}.
\end{proof}

The theorem implies that, whenever the discrete solution is affine on the patch surrounding node $i$, the limiter is inactive at that node, i.e., $b_{ij}(\mathbf u)=0$ for all $j\in S_i$. Furthermore, the authors have given an explicit lower bound for $\mu_i$ depending on mesh patch geometry for linearity preservation.

\begin{theorem}
	Consider conforming $\mathbb{P}_1$ or $\mathbb{Q}_1$ finite elements on any admissible triangulation of $\Omega$. Let the sets $S_i$ be defined as in Eq.~\eqref{eq:S_i}  and consider any constants $\mu_i$ satisfying
	\begin{equation}
		\mu_i \ge \frac{\max_{\mathbf{x}_j \in \partial\omega_i} \|\mathbf{x}_i - \mathbf{x}_j\|_2}{\mathrm{dist}\left(\mathbf{x}_i, \partial\omega_i^{\text{conv}}\right)}, \quad i = 1, \dots, m,
	\end{equation}
	where $\omega_i^{\text{conv}}$ is the convex hull of $\omega_i$. The nonlinear problem Eq.~\eqref{eq:AFC_Sys} with the AFC matrix Eq.~\eqref{eq:AFCMat} defined using the BJK limiter given by Eq.~\eqref{eq:BJKInit}--Eq.~\eqref{eq:Def_6} is linearity preserving.
\end{theorem}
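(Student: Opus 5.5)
By Theorem~\ref{Thm:LP}, it suffices to show that the stated lower bound on $\mu_i$ implies condition \eqref{eq:condition} for every $p\in\mathbb{P}_1(\omega_i)$. So fix $i$ and an affine function $p(\mathbf{x}) = p_i + \mathbf{g}\cdot(\mathbf{x}-\mathbf{x}_i)$. If $\mathbf{g}=0$, both sides of \eqref{eq:condition} vanish and there is nothing to prove. Otherwise we bound $p_i - p_i^{\min}$ from above and $p_i^{\max}-p_i$ from below, both in terms of $\|\mathbf{g}\|_2$.

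\emph{Upper bound.} Every $j\in S_i$ has $\mathbf{x}_j\in\omega_i$. For $\mathbb{P}_1$ and $\mathbb{Q}_1$ elements, the nodes of $\omega_i$ other than $\mathbf{x}_i$ are vertices of cells containing $\mathbf{x}_i$. We first check that these lie on $\partial\omega_i$, because $\mathbf{x}_i$ is the only interior vertex of the patch. By Cauchy--Schwarz,
$$p_i - p_i^{\min} = \max_j\, \mathbf{g}\cdot(\mathbf{x}_i-\mathbf{x}_j) \le \|\mathbf{g}\|_2 \max_{\mathbf{x}_j\in\partial\omega_i}\|\mathbf{x}_i-\mathbf{x}_j\|_2 .$$

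\emph{Lower bound (main step).} Set $\mathbf{e}=\mathbf{g}/\|\mathbf{g}\|_2$ and $\delta=\mathrm{dist}(\mathbf{x}_i,\partial\omega_i^{\rm conv})$. Since $\mathbf{x}_i$ lies in the interior of $\omega_i$, the point $\mathbf{y}=\mathbf{x}_i+\delta\mathbf{e}$ belongs to $\omega_i^{\rm conv}$. The hull $\omega_i^{\rm conv}$ is the convex hull of the vertices $\{\mathbf{x}_j\}_{j\in S_i}$: cells are polytopes, and for $\mathbb{Q}_1$ the (possibly mapped) quadrilateral with straight edges is the convex hull of its four vertices. Hence $\mathbf{y}=\sum_j\lambda_j\mathbf{x}_j$ with $\lambda_j\ge0$ and $\sum_j\lambda_j=1$. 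Because $p$ is affine, $p(\mathbf{y})=\sum_j\lambda_j p_j\le p_i^{\max}$. On the other hand, $p(\mathbf{y})=p_i+\delta\|\mathbf{g}\|_2$. Therefore $p_i^{\max}-p_i\ge \delta\|\mathbf{g}\|_2$.

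Combining the two bounds gives
$$p_i-p_i^{\min}\le \frac{\max_{\mathbf{x}_j\in\partial\omega_i}\|\mathbf{x}_i-\mathbf{x}_j\|_2}{\delta}\,(p_i^{\max}-p_i)\le\mu_i(p_i^{\max}-p_i),$$
which is \eqref{eq:condition}, and Theorem~\ref{Thm:LP} then yields linearity preservation. The main point requiring care is the geometric fact that the patch hull is spanned by the stencil nodes. For curved or trilinear hexahedral faces, I would argue that the cell is contained in the convex hull of its vertices, since the reference map is a convex combination of vertex positions with nonnegative shape-function weights. This containment is all that is needed.
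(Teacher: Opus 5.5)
Your argument is correct and is essentially the proof in the references the paper defers to for this result (\cite[Theorem~6.1]{BJK17}, \cite[Theorem~10.42]{BJK25}): you reduce to condition \eqref{eq:condition} via Theorem~\ref{Thm:LP}, bound $p_i-p_i^{\min}$ by Cauchy--Schwarz, and bound $p_i^{\max}-p_i$ from below by evaluating the affine function at the point of $\omega_i^{\text{conv}}$ at distance $\mathrm{dist}(\mathbf{x}_i,\partial\omega_i^{\text{conv}})$ from $\mathbf{x}_i$ in the gradient direction. The one step you assert rather than prove, that every neighbour lies on $\partial\omega_i$, can be bypassed: $\max_{j\in S_i}\|\mathbf{x}_i-\mathbf{x}_j\|_2$ is attained at an extreme point of $\omega_i^{\text{conv}}$, and such a node cannot lie in the interior of $\omega_i$.
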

\begin{proof}
 See \cite[Theorem~6.1]{BJK17} or \cite[Theorem~10.42]{BJK25}.
\end{proof}

The main goal of this paper is to extend the linearity-preserving BJK limiter to bilinear functions. Since functions in $\mathbb Q_1$ are harmonic and, on suitable meshes, are exactly represented by the finite element space, we seek constants \(\mu_i\) for which the limiter remains inactive on such functions. Our proposed limiter will differ from the BJK limiter only in the definition of $\mu_i$ to incapacitate this property. In the spirit of BJK limiter, we will call it the  ``BJK($ \mathbb{Q}_1 $) limiter" . 
 
\begin{theorem}	
For any $i \in \{1, \dots, m\}$, let $\mu_i > 0$ be such that
	\begin{equation}
		p_i - p_i^{\min} \le \mu_i (p_i^{\max} - p_i) \quad \forall p \in \mathbb{Q}_1(\omega_i), \label{eq:conditionQ_1}
	\end{equation}
where $p_j = p(\mathbf{x}_j)$, $j = 1, \dots, n$, and $p_i^{\min}$, $p_i^{\max}$ are defined similar to Eq.~\eqref{def:minmax}. Then the nonlinear problem Eq.~\eqref{eq:AFC_Sys} with the AFC matrix Eq.~\eqref{eq:AFCMat} defined using the limiter given by Eq.~\eqref{eq:BJKInit}--Eq.~\eqref{eq:Def_6} is bilinearity preserving.	\label{Cor:BLP}	
\end{theorem}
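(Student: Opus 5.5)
The argument follows the proof of Theorem~\ref{Thm:LP} (i.e.\ \cite[Lemma~10.9]{BJK25}), with $\mathbb{P}_1$ replaced by $\mathbb{Q}_1$. Bilinearity preservation means: if $\mathbf u$ is the nodal interpolant of some $p\in\mathbb{Q}_1(\omega_i)$ on the patch of a non-Dirichlet node $i$, then $b_{ij}(\mathbf u)=0$ for all $j\in S_i$. By Eq.~\eqref{eq:AFCMat} this holds exactly when $\alpha_{ij}(\mathbf u)=1$ for every $j\in S_i$, or $d_{ij}=0$. The proof therefore reduces to showing $R_i^+=R_i^-=1$ at such nodes, and then controlling the symmetrization Eq.~\eqref{eq:Def_6}.

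First, fix $i$ and $p$ with $u_j=p_j$ for $j\in S_i\cup\{i\}$. Note that $d_{ij}\le0$, so $q_i\le 0$. Write $-d_{ij}=|d_{ij}|$. Then
\[
P_i^+=\sum_{j\in S_i}|d_{ij}|(u_i-u_j)^+\le |q_i|\,(u_i-u_i^{\min}),
\]
since $(u_i-u_j)^+\le u_i-u_i^{\min}$. Similarly $Q_i^+=|q_i|(u_i^{\max}-u_i)$. Hence
\[
\mu_iQ_i^+\ge P_i^+ \quad\text{whenever}\quad \mu_i(u_i^{\max}-u_i)\ge u_i-u_i^{\min},
\]
and this is exactly condition Eq.~\eqref{eq:conditionQ_1} applied to $p$. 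The case $P_i^+=0$ gives $R_i^+=1$ by convention. For $R_i^-$, apply Eq.~\eqref{eq:conditionQ_1} to $-p$, which also lies in $\mathbb{Q}_1(\omega_i)$ because the space is linear. This gives $p_i^{\max}-p_i\le\mu_i(p_i-p_i^{\min})$, and the same estimate with signs reversed yields $R_i^-=1$. So $\widetilde\alpha_{ij}=1$ for all $j\in S_i$.

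Second, symmetrization sets $\alpha_{ij}=\min\{\widetilde\alpha_{ij},\widetilde\alpha_{ji}\}$, so $\widetilde\alpha_{ji}$ from the neighbour's side also matters. Here I will follow \cite[Lemma~10.9]{BJK25} exactly. The notion of preservation there is global: if the discrete solution is the interpolant of a $\mathbb{Q}_1$ function on the patches of all nodes involved (in particular on $\omega_i\cup\omega_j$, or globally), then the same argument applies at $j$. For Dirichlet nodes, $R_j^\pm=1$ by Eq.~\eqref{eq:Def_1}. Hence $\widetilde\alpha_{ji}=1$ and $\alpha_{ij}=1$, so $b_{ij}(\mathbf u)=0$ and the nonlinear term vanishes. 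Since $p$ is then a solution of the Galerkin part, the AFC problem reproduces it.

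The main obstacle is not the algebra but matching the paper's precise definition of ``bilinearity preserving''. In particular, one must check whether the hypothesis is local to $\omega_i$ or also covers neighbouring patches, so that the symmetrization step is legitimate. I would state the definition so that it is the direct analogue of linearity preservation in \cite{BJK25}, replacing $\mathbb{P}_1$ by $\mathbb{Q}_1$ throughout. Then the proof of Lemma~10.9 transfers verbatim, because it uses only (a) the linearity of the function class under $p\mapsto -p$ and (b) the inequality Eq.~\eqref{eq:conditionQ_1}.
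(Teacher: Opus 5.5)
Your proposal is correct and follows the paper's route: the paper's proof simply transfers \cite[Lemma~10.9]{BJK25} from $\mathbb{P}_1$ to $\mathbb{Q}_1$, and your bounds $P_i^+\le |q_i|(u_i-u_i^{\min})$, $Q_i^+=|q_i|(u_i^{\max}-u_i)$, together with applying Eq.~\eqref{eq:conditionQ_1} to $-p$ to get $R_i^-=1$, are exactly those steps. Your caution about the symmetrization Eq.~\eqref{eq:Def_6} is well founded, since $\alpha_{ij}=1$ also needs $R_j^{\pm}=1$ and hence the bilinear hypothesis on $\omega_j$ (or globally), not only on $\omega_i$; your closing remark that the AFC problem ``reproduces'' $p$ goes beyond the statement and would additionally need $p\in V_h$ and, to identify $p$ with the computed solution, uniqueness of the nonlinear problem.
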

\begin{proof}
	The proof follows the exact same steps of Theorem~\ref{Thm:LP}.
\end{proof}

However, not all mesh can have such a constant unlike linear functions. We will start the next section by classifying meshes which can exhibit these properties for a bilinear function.
\section{Bilinearity Preserving Limiters }\label{sec:bilinear}
\subsection{Meshes Capturing Bilinear Maximum Principle }\label{sec:meshclass}

One can observe that, for a linear function, Eq.~\eqref{eq:condition} is always satisfied in any mesh where the internal node is in the convex hull of a polygonal patch $\omega_i$. Although a general bilinear function $p \in \mathbb{Q}_1(\mathbb{R}^2)$ is harmonic ($\Delta p = 0$) and satisfies the classical local maximum principles, it does not exhibit maximum and minimum in the boundary nodes in general. For example, consider the following situation which is not very difficult to occur. 

\begin{example}\label{ex:counterexample}
Consider the function $u = -xy$ in the parallelogram mesh patch with four parallelogram elements (see Fig.~\ref{fig:1}) with the central node at the origin. The surrounding nodes are in the first and third quadrant only. Hence the values at those points are negative. In such a case, the maximum occurs at the central node (0 is the maximum value here). If there were a constant $\mu_i$ as in Eq.~\eqref{eq:condition}, then the right hand side would be zero, whereas the left hand side would be positive since the minimum occurs in the surrounding nodes. Hence, no such constant exists.\label{CounterExample}
\end{example}	

\begin{figure}[tbp]	
	\begin{center}
	\begin{tikzpicture}[scale=0.5]
				
	\draw[->,very thick,black] (-8,0) -- (8,0) node[right] {$x$};
	\draw[->,very thick,black] (0,-8) -- (0,8) node[above] {$y$};

	\coordinate (A) at (2,5);
	\coordinate (B) at (4,6);
	\coordinate (C) at (6,7);
				
	\coordinate (Ap) at (-6,-7);
	\coordinate (Bp) at (-4,-6);
	\coordinate (Cp) at (-2,-5);
				
	\coordinate (O) at (0,0);
	\coordinate (D') at (-2,-1);
	\coordinate (D) at (2,1);
	\draw[thick] (Ap)--(A)--(B)--(C);
	\draw[thick] (Bp)--(O)--(B);
	\draw[thick] (Cp)--(C);
				
	\draw[thick] (Ap)--(Bp)--(Cp);
	\draw[thick] (D')--(O)--(D);				
				
	\fill[blue] (A) circle (2pt) node[above] {$A$};
	\fill[blue] (B) circle (2pt) node[above] {$B$};
	\fill[blue] (C) circle (2pt) node[above] {$C$};
	\fill[blue] (D) circle (2pt) node[above] {$D$};
	\fill[blue] (D') circle (2pt) node[above] {$D'$};
	\fill[blue] (Ap) circle (2pt) node[below] {$A'$};
	\fill[blue] (Bp) circle (2pt) node[below] {$B'$};
	\fill[blue] (Cp) circle (2pt) node[below] {$C'$};			
	\fill[gray] (O) circle (2pt) node[below right] {$O$};
				
	\begin{axis}[
			at={(0,0)},
			anchor=origin,
			axis lines=none,
			xmin=-8,xmax=8,
			ymin=-8,ymax=8,
			samples=200,
			clip=false
			]
					
	\addplot[	domain=3:10,smooth,blue]{66/x};
					
	\addplot[	domain=-3:-10,smooth,blue	]{66/x};
					
	\addplot[	domain=-1:-10,smooth,	red]{12.9/x};
					
	\addplot[	domain=1:10,smooth,red]	{12.9/x};
					
	\addplot[	domain=7:15,	smooth,orange]	{157/x};
					
	\addplot[	domain=-7:-15,smooth,orange]{157/x};
					
	\addplot[	domain=12.3:18,smooth,green]{275/x};
					
	\addplot[	domain=-12.3:-18,smooth,green]{275/x};
	\end{axis}
				
	\node[blue] at (5,2.2) {$-xy=-10$};
	\node[orange] at (7,3.4) {$-xy=-24$};
	\node[red] at (4,1) {$-xy=-2$};
	\node[green] at (7,5.2) {$-xy=-42$};
\end{tikzpicture}					
\end{center}
\caption{A parallelogram patch and the level curves of $-xy$ passing through nodes.}\label{fig:1}
\end{figure}

Therefore, we establish the necessary and sufficient condition for a patch that satisfies the discrete analogue of classical local maximum principle,  i.e., maximum and minimum occur only at the boundary nodes, for any bilinear map in this sub-section.
 
\begin{definition}
We will say a mesh patch $\omega_i$ satisfies \textbf{bilinear maximum principle} if 
\begin{equation}
p_i^{\min} \leq p_i \leq p_i^{\max} \qquad \forall p \in \mathbb{Q}_1(\omega_i), \label{eq:BMP}
\end{equation}
where $p_i^{\min} = \min\limits_{j \in S_i} p_j$ and $p_i^{\max}=\max\limits_{j \in S_i} p_j$, and $p_i$ is the value of $p$ at the $i$th node. Moreover equality in either inequality holds if and only if $p$ is constant.  A mesh will be said to satisfy bilinear maximum principle if and only if each mesh patch $\omega_i$ satisfies Eq.~\eqref{eq:BMP} corresponding to $i=1,2,\cdots,m$.	\label{Def:BilMax}
 \end{definition}
 
 \begin{remark}
	We clarify that this definition is a condition on a mesh patch or a mesh, in contrast to the discrete maximum principle which is a condition on the system of equations to be solved. Also, note that the definition of $p^{\min}_i$ and $p^{\max}_i$ does not consider the value at $p_i$, i.e., it is the maximum (and minimum) of the values of the surrounding nodes $S_i$. 
\end{remark}
The condition given by Eq.~\eqref{eq:BMP} on a mesh patch ensures the existence of a constant $\mu_i$ as required in the condition \eqref{eq:conditionQ_1}. On the other hand, this condition is necessary as we saw in the case of Example~\ref{CounterExample}.

\begin{proposition}\label{Prop:Existence}
For functions $p \in \mathbb{Q}_1(\omega_i) $, the Eq.~\eqref{eq:conditionQ_1} holds if and only if Eq.~\eqref{eq:BMP} holds.
\end{proposition}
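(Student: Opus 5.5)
The plan is to prove the two implications separately. Throughout, I read ``$p$ constant'' in Definition~\ref{Def:BilMax} as ``the nodal values $p_j$, $j\in S_i\cup\{i\}$, are all equal''. If all nodal values coincide, both sides of Eq.~\eqref{eq:conditionQ_1} vanish, so such $p$ are irrelevant in either direction. I will also use that $\max_{j\in S_i\cup\{i\}}p_j=\max_{j\in S_i}p_j$ as soon as $p_i\le\max_{j\in S_i}p_j$, and similarly for the minimum. Hence, under Eq.~\eqref{eq:BMP}, the quantities $p_i^{\max},p_i^{\min}$ of Eq.~\eqref{eq:conditionQ_1} (taken over $S_i\cup\{i\}$) agree with those of Definition~\ref{Def:BilMax} (taken over $S_i$).

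\emph{Necessity.} I would argue by contraposition, generalizing Example~\ref{ex:counterexample}. Suppose Eq.~\eqref{eq:BMP}, with its strictness clause, fails. Then there is a $p\in\mathbb{Q}_1(\omega_i)$ with non-equal nodal values such that either $p_i\ge p_j$ for all $j\in S_i$, or $p_i\le p_j$ for all $j\in S_i$.

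In the first case $p_i^{\max}=p_i$, so the right-hand side of Eq.~\eqref{eq:conditionQ_1} is zero for every $\mu_i$. The left-hand side $p_i-p_i^{\min}$ is strictly positive because the nodal values are not all equal, so no $\mu_i>0$ works.

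In the second case I apply Eq.~\eqref{eq:conditionQ_1} to $-p\in\mathbb{Q}_1(\omega_i)$. It reads $p_i^{\max}-p_i\le\mu_i(p_i-p_i^{\min})=0$, which contradicts $p_i^{\max}>p_i$.

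\emph{Sufficiency.} I would use a compactness argument on the finite-dimensional space of nodal values. Let $E:\mathbb{Q}_1(\omega_i)\to\R^{S_i\cup\{i\}}$ be the evaluation map; its image $W$ is a linear subspace. Both sides of Eq.~\eqref{eq:conditionQ_1} are positively homogeneous of degree one and invariant under adding constants (note $\mathbb{Q}_1$ contains the constants). It therefore suffices to consider the set
\[
K=\Bigl\{\mathbf{v}\in W:\ v_i=0,\ \max_{j\in S_i}|v_j|=1\Bigr\},
\]
which is closed and bounded, hence compact. Every $p$ whose nodal values are not all equal reduces to an element of $K$ by subtracting $p_i$ and rescaling. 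On $K$, the strict form of Eq.~\eqref{eq:BMP} gives $D(\mathbf{v}):=\max_{j\in S_i}v_j-v_i>0$. Moreover $D$ is continuous as a maximum of linear functions, so it attains a minimum $\delta>0$ on $K$. The numerator $N(\mathbf{v}):=v_i-\min_{j\in S_i}v_j$ satisfies $N\le1$ on $K$. Consequently $\mu_i:=1/\delta$, or any larger value, gives $N\le\mu_i D$ on $K$, and hence Eq.~\eqref{eq:conditionQ_1} for all $p$.

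The main obstacle is the sufficiency step. One must ensure the denominator does not degenerate to zero along a sequence of nonconstant $p$. This is exactly where the strict clause of Definition~\ref{Def:BilMax} is needed: it must exclude $p_i=p_i^{\max}$ for nonconstant nodal data, and the compactness of $K$ turns this pointwise positivity into a uniform lower bound $\delta$. The normalization must also be chosen so that $K$ avoids the nodally constant functions, which is why I fix $v_i=0$ and the sup-norm over $S_i$ rather than over the whole space $\mathbb{Q}_1(\omega_i)$.
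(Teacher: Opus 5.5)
Your proof is correct and follows the paper's route. Necessity is the same contradiction at a nodal extremum; you make the minimum case explicit by applying Eq.~\eqref{eq:conditionQ_1} to $-p$, where the paper only says ``a similar argument''. Sufficiency is a compactness argument on the normalized set $\{p_i=0,\ \max_{j\in S_i}|p_j|=1\}$, as in the paper.

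The differences are minor. You bound the denominator from below on the image of the evaluation map, rather than maximizing the ratio $R$ over the coefficients $(b,c,d)$. This gives a valid, though not necessarily optimal, $\mu_i$, and it avoids needing $\|\cdot\|_{\infty,S_i}$ to be a norm on the coefficient space. Your nodal reading of ``constant'' is exactly what the paper's step ``$p$ non-constant, hence some $p_j<p_i$'' uses without saying so.
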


\begin{proof}
\textbf{($\implies$):}
For constant $p\in \mathbb{Q}_1(\omega_i) $, Eq.~\eqref{eq:BMP} and Eq.~\eqref{eq:conditionQ_1} holds trivially.

Consider a non-constant $p \in \mathbb{Q}_1(\omega_i)$ satisfying $p_i = p_i^{\max}$. Since $p$ is non-constant, at least one node in $S_i$ satisfies $p_j < p_i$, implying $p_i - p_i^{\min} > 0$. If there is a constant $\mu_i > 0$ satisfying Eq.~\eqref{eq:conditionQ_1}, then 
$$
p_i - p_i^{\min} \le \mu_i (p_i^{\max} - p_i) = \mu_i (0) = 0 \implies p_i - p_i^{\min} \le 0.
$$

Hence, by contradiction $p_i < p_i^{\max}$ is necessary for $\mu_i > 0$ to exist. One can establish $p_i^{\min} < p_i$ with a similar argument. Hence the maximum occurs at the boundary nodes  and Eq,~\eqref{eq:BMP} holds.
	
\vspace{0.5em}

\textbf{($\impliedby$):}
Define a function $R:\mathbb{Q}_1(\omega_i)\rightarrow\mathbb{R}; R(p) = \frac{p_i - p_i^{\min}}{p_i^{\max} - p_i}$ for non-constant $p$. This function is invariant under translation $p \to p + c$ and scaling $p \to \lambda p$ ($\lambda\neq 0 $). Hence we need to consider only the space of non-constant bilinear functions by setting $p_i = 0$ and $\|p\|_{\infty,S_i} = \max_{j \in S_i} |p_j| = 1$. Let
$$
W = \{ p \in \mathbb{Q}_1(\omega_i) \mid p(0,0) = 0, \, \|p\|_{\infty,S_i} = 1, \text{ $p$ is not constant } \}.
$$
$W$ can be parameterized by coefficients $(b,c,d)$, and hence identical to a compact subset of $\mathbb{R}^3$. Compactness of $W$ follows from the fact that $\|\cdot \|_{\infty,S_i}$ is a norm on $\mathbb{Q}_1(\omega_i)$.  Now, $p_i^{\max} > 0$ for all $p \in W$, so the denominator $p_i^{\max} - p_i = p_i^{\max}$ is strictly positive for $p \in W$. Furthermore, the discrete maximum (and minimum) functions are continuous, and therefore $R(p)$ is a composition of continuous functions. Hence $R$ is continuous on the compact set $W$. Thus $R(p)$ is bounded and achieves the maximum in $W$ by the extreme value theorem. One can define
$$
\mu_i = \max_{p \in W} R(p) < \infty.
$$
This proves the existence of a finite constant $\mu_i > 0$.
\end{proof}

\begin{remark}
The strictness in Definition~\ref{Def:BilMax} is what makes the denominator in the proof above strictly positive, and by Theorem~\ref{thm:BMPiffBC} it corresponds to the origin lying in the relative interior of the lifted hull.  Containment in the hull alone is not sufficient: if some neighbour carries 	weight zero in every representation of the origin, there is a non-constant 	$p \in \mathbb{Q}1(\omega_i)$ with $p_i = p_i^{\max} = 0$ and $p_i^{\min} < 0$, and no finite $\mu_i$ satisfies	Eq.~\eqref{eq:conditionQ_1}.
\end{remark}

Now, Eq.~\eqref{eq:BMP} is equivalent to saying that for any bilinear function having a zero at the internal node, there exists a node where it is non-negative and a node where it is non-positive.

\begin{lemma}
A mesh satisfies Bilinear Maximum Principle if and only if it satisfies
\begin{equation}
p_i^{\min} \leq 0 \leq p_i^{\max} \qquad \forall p \in \mathbb{Q}_1(\mathbb{R}^2),\; \text{and} \; p_i =0.   \label{eq:BMP2}
\end{equation}
with $p_i^{\min} = \min\limits_{j \in S_i} p_j$, $p_i^{\max}=\max\limits_{j \in S_i} p_j$.\label{lem:BMP2}
\end{lemma}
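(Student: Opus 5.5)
The plan is to reduce the statement to the invariance of the bilinear space under adding constants. I will treat both directions patch by patch: fix a patch $\omega_i$ with central node $\mathbf{x}_i$, and note that every $p\in\mathbb{Q}_1(\omega_i)$ is the restriction of a function in $\mathbb{Q}_1(\mathbb{R}^2)$. The key observation is that $\mathbb{Q}_1$ contains the constants. Hence for any $p$ the shifted function $\tilde p = p - p_i$ is again bilinear, satisfies $\tilde p_i = 0$, and $\tilde p_j = p_j - p_i$ for all $j\in S_i$. Since the shift commutes with taking min and max over $S_i$, we get $\tilde p_i^{\min} = p_i^{\min}-p_i$ and $\tilde p_i^{\max}=p_i^{\max}-p_i$.

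\textbf{($\implies$):} Assume Eq.~\eqref{eq:BMP} holds on every patch. Take any $p\in\mathbb{Q}_1(\mathbb{R}^2)$ with $p_i=0$. Its restriction to $\omega_i$ lies in $\mathbb{Q}_1(\omega_i)$, so Eq.~\eqref{eq:BMP} gives $p_i^{\min}\le 0\le p_i^{\max}$ directly. This is Eq.~\eqref{eq:BMP2}.

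\textbf{($\impliedby$):} Assume Eq.~\eqref{eq:BMP2}, and let $p\in\mathbb{Q}_1(\omega_i)$ be arbitrary. Extend $p$ to $\mathbb{R}^2$ by the same bilinear formula, and form $\tilde p = p-p_i$. Then Eq.~\eqref{eq:BMP2} applied to $\tilde p$ gives $p_i^{\min}-p_i\le 0\le p_i^{\max}-p_i$, which is exactly Eq.~\eqref{eq:BMP}.

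The only delicate point, and the main obstacle, is the strictness clause in Definition~\ref{Def:BilMax}: equality may hold only for constant $p$. The non-strict inequalities transfer verbatim under the shift, and so would the strict version. So I will read Eq.~\eqref{eq:BMP2} with the same convention: equality $p_i^{\min}=0$ or $p_i^{\max}=0$ is allowed only when $p\equiv 0$ on the nodes. Under that convention, $\tilde p$ is constant on the nodes if and only if $p$ is, because the two differ by a constant. The equality cases therefore correspond exactly, and both directions go through.

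If instead Eq.~\eqref{eq:BMP2} is meant purely non-strictly, I will state the lemma for the non-strict part of Definition~\ref{Def:BilMax}. I will also remark that the strict part is handled separately, via the relative-interior condition mentioned in the preceding remark and Theorem~\ref{thm:BMPiffBC}. No compactness or geometric argument is needed here; the proof is purely this translation invariance.
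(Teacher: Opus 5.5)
Your argument is correct and is essentially the paper's proof. The forward direction specializes Eq.~\eqref{eq:BMP} to $p_i=0$, and the converse applies the hypothesis to $p-p_i$, using that the minimum and maximum over $S_i$ shift by the same constant. Your caveat about the strictness clause is also well placed: the paper's proof establishes only the non-strict equivalence, and with Eq.~\eqref{eq:BMP2} read non-strictly the converse would fail for the equality clause on a patch that is bilinearly convex but not strictly so, which is why the strict case is handled separately in Theorem~\ref{thm:BMPiffBC}.
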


\begin{proof}
We establish the equivalence by proving both directions of the implication.
	
\textbf{($\implies$):} 
Assume the mesh satisfies Eq.~\eqref{eq:BMP} for all $p \in \mathbb{Q}_1(\mathbb{R}^2)$. Let $p$ be an arbitrary bilinear function in $\mathbb{Q}_1(\mathbb{R}^2)$ that  satisfies $p_i = 0$. Substituting $p_i = 0$ into Eq.~\eqref{eq:BMP}:
$$
p_i^{\min} \leq 0 \leq p_i^{\max}
$$
Thus, Eq.~\eqref{eq:BMP2} clearly holds.
	
\textbf{($\impliedby$):} 
Assume Eq.~\eqref{eq:BMP2} holds, meaning that any bilinear function vanishing at the $i$th node satisfies $q_i^{\min} \leq 0 \leq q_i^{\max}$. 
	
Let $p \in \mathbb{Q}_1(\mathbb{R}^2)$ be a general bilinear function with an arbitrary value $p_i$ at the $i$th node. We construct an auxiliary function $q$ by subtracting the constant value $p_i$ from $p$:
$$
q(\mathbf{x}) = p(\mathbf{x}) - p_i
$$
	
Because the space $\mathbb{Q}_1(\mathbb{R}^2)$ contains constant functions, the difference of two bilinear functions remains a bilinear function, ensuring $q \in \mathbb{Q}_1(\mathbb{R}^2)$. Evaluating $q$ at the $i$th node gives:
$$
q_i = p_i - p_i = 0
$$
	
Since $q \in \mathbb{Q}_1(\mathbb{R}^2)$ and $q_i = 0$, it satisfies Eq.~\eqref{eq:BMP2} and by the assumption:
\begin{equation}
q_i^{\min} \leq 0 \leq q_i^{\max} \label{eq:q_ineq}
\end{equation}

By definition, the nodal value of $q$ at any neighboring node $j$ is given by $q_j = u_j - u_i$. Because subtracting a constant shifts all nodal values uniformly without altering their relative order, the minimum and maximum neighboring values of $q$ can be expressed directly in terms of $p$:
\begin{align*}
q_i^{\min} &= \min_{j \in S_i} q_j = \min_{j \in S_i} (p_j - p_i) = p_i^{\min} - p_i, \\
q_i^{\max} &= \max_{j \in S_i} q_j = \max_{j \in S_i} (p_j - p_i) = p_i^{\max} - p_i.
\end{align*}

Substituting these relationships back into the inequality \eqref{eq:q_ineq} yields:
$$
p_i^{\min} - p_i \leq 0 \leq p_i^{\max} - p_i
$$
Adding $p_i$ to all parts of the inequality results in:
$$
p_i^{\min} \leq p_i \leq p_i^{\max}
$$

Since $p \in \mathbb{Q}_1(\mathbb{R}^2)$ was chosen arbitrarily, Eq.~\eqref{eq:BMP} holds for all bilinear functions.
\end{proof}

\begin{definition}
We call a mesh patch $\omega_i$ \textbf{bilinearly convex} if the convex hull
of the lifted points
$$
\mathbf{P}_j = \bigl((x_j-x_i),\,(y_j-y_i),\,(x_j-x_i)(y_j-y_i)\bigr)^{\top},	\qquad j \in S_i,
$$
contains the origin of $\mathbb{R}^3$, and \textbf{strictly bilinearly convex} if the origin lies in the relative interior of that convex hull, that is, if there exist weights $\alpha_j > 0$ with
\begin{equation}\label{eq:convexrep}
\sum_{j \in S_i} \alpha_j = 1 	\qquad\text{and}\qquad 	\sum_{j \in S_i} \alpha_j \mathbf{P}_j = \mathbf{0}.	
\end{equation}
\label{Def:BilConv}
\end{definition}

Geometrically, if the nodes of the patch are lifted onto the graph of $xy$ in a
coordinate system centred at the internal node, the patch is bilinearly convex
when the convex hull of the lifted nodes contains the lifted internal node.
Both conditions are decided by a small linear feasibility problem in the weights
$\alpha_j$.
 
The next theorem states the equivalence of the conditions \emph{bilinear maximum principle}~ \ref{Def:BilMax} and \emph{bilinear convexity}~\ref{Def:BilConv} on a mesh patch.
 
\begin{theorem}
A mesh patch $\omega_i$ satisfies
\begin{equation*}
	p_i^{\min} \le p_i \le p_i^{\max} \qquad \forall\, p \in \mathbb{Q}_1(\omega_i)
\end{equation*}
if and only if it is bilinearly convex, and satisfies the bilinear maximum principle of Definition~\ref{Def:BilMax}, with equality only for constant $p$, 	if and only if it is strictly bilinearly convex.\label{thm:BMPiffBC}
\end{theorem}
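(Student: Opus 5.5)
The plan is to reduce both statements to linear algebra on the lifted points $\mathbf P_j$. By Lemma~\ref{lem:BMP2} it suffices to consider $p$ with $p_i=0$. Writing such a $p$ in coordinates centred at $\mathbf x_i$ gives $p(\mathbf x)=b(x-x_i)+c(y-y_i)+d(x-x_i)(y-y_i)$, so $p_j=\mathbf v\cdot\mathbf P_j$ with $\mathbf v=(b,c,d)^\top$. The map $p\mapsto\mathbf v$ is a bijection between $\{p\in\mathbb Q_1: p_i=0\}$ and $\mathbb R^3$, and $p$ is nonconstant iff $\mathbf v\neq\mathbf 0$. Condition~\eqref{eq:BMP2} then says that no linear functional $\mathbf v$ is strictly positive on all $\mathbf P_j$, and none is strictly negative on all of them. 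Since $\mathbf v$ may be replaced by $-\mathbf v$, these two requirements coincide.

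For the first equivalence I would argue via a separating hyperplane (Gordan's alternative). Suppose $\mathbf 0\in\operatorname{conv}\{\mathbf P_j\}$, say $\sum\alpha_j\mathbf P_j=\mathbf 0$ with $\alpha_j\ge0$ and $\sum\alpha_j=1$. Then $\sum\alpha_j p_j=0$ for every such $p$, so the $p_j$ cannot all be positive, nor all negative; this gives $p_i^{\min}\le0\le p_i^{\max}$. Conversely, if $\mathbf 0\notin\operatorname{conv}\{\mathbf P_j\}$, this hull is a compact convex set (a polytope), so strict separation yields $\mathbf v$ with $\mathbf v\cdot\mathbf P_j>0$ for all $j$. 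The corresponding $p$ has $p_i=0<p_i^{\min}$, which violates~\eqref{eq:BMP}.

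For the strict version I need the following statement. There exist weights $\alpha_j>0$ with $\sum_j\alpha_j\mathbf P_j=\mathbf 0$ if and only if every $\mathbf v$ with $\mathbf v\cdot\mathbf P_j\ge0$ for all $j$ satisfies $\mathbf v\cdot\mathbf P_j=0$ for all $j$. This is Stiemke's transposition theorem. The forward direction is immediate: $0=\sum\alpha_j\,\mathbf v\cdot\mathbf P_j$ is a sum of nonnegative terms with positive weights, so every term vanishes. Translated back, if $p_i=p_i^{\min}$ (shifted to $0$) then all $p_j=0$, hence $\mathbf v\cdot\mathbf P_j=0$ for all $j$. It remains to show $\mathbf v=\mathbf 0$, i.e.\ that $p$ is constant. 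For this I will use that the $\mathbf P_j$ span $\mathbb R^3$. This holds because $\|\cdot\|_{\infty,S_i}$ is a norm on $\mathbb Q_1(\omega_i)$, as used in Proposition~\ref{Prop:Existence}: a bilinear function vanishing at $\mathbf x_i$ and at all of $S_i$ vanishes on every cell of the patch and is therefore zero. The same argument handles $p_i=p_i^{\max}$ by replacing $p$ with $-p$.

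For the converse I would invoke Stiemke's lemma, or prove it directly by considering the cone $C=\{\sum\beta_j\mathbf P_j:\beta_j\ge0\}$. If the origin is not in the relative interior of the hull, then some index $k$ carries weight zero in every representation of $\mathbf 0$. Applying Farkas' lemma to the system $\sum\beta_j\mathbf P_j=-\mathbf P_k$ with $\beta\ge0$ gives $\mathbf v$ with $\mathbf v\cdot\mathbf P_j\ge0$ for all $j$ and $\mathbf v\cdot\mathbf P_k>0$. The corresponding nonconstant $p$ has $p_i=0=p_i^{\min}$, so equality holds for a nonconstant $p$. I expect this step to be the main obstacle: extracting the single index $k$ that carries zero weight, and checking that the infeasibility of the system is exactly the failure of strict convexity. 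I would settle it by averaging over feasible representations, since the set of representations is convex, an average of one strictly positive weight per index would be strictly positive. Finally, I must reconcile "relative interior" with the existence of strictly positive weights. Because the $\mathbf P_j$ span $\mathbb R^3$, the relative interior is the ordinary interior, and I would note this equivalence explicitly.
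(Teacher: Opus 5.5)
Your route is the paper's. You reduce to $p_i=0$ by Lemma~\ref{lem:BMP2}, write $p_j=\mathbf v\cdot\mathbf P_j$, and use strict separation for the non-strict equivalence. For the strict one you use positive weights in one direction and a proper supporting functional in the other; your Farkas-plus-averaging derivation of that functional is a correct, more explicit version of the paper's supporting-hyperplane step.

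The gap is the step ``it remains to show $\mathbf v=\mathbf 0$''. Your justification, that a bilinear function vanishing at $\mathbf x_i$ and on $S_i$ vanishes on every cell, is valid for the mapped finite element space. But the lifting parametrizes \emph{global} polynomials $a+bx+cy+dxy$, and for these the four vertices of a non-rectangular cell are not unisolvent: $xy$ vanishes at the vertices $(\pm\sqrt2,0),(0,\pm\sqrt2)$ of the rotated square in the paper's own example. The norm property you borrow from Proposition~\ref{Prop:Existence} fails for the same reason, and the spanning claim can genuinely be false. Take the conforming patch of three convex quadrilaterals around $\mathbf 0$ with vertices $(\mathbf 0,(1,-\tfrac12),(3,-\tfrac34),(-\tfrac12,1))$, $(\mathbf 0,(-\tfrac12,1),(-3,-\tfrac32),(-2,-2))$ and $(\mathbf 0,(-2,-2),(-\tfrac32,-3),(1,-\tfrac12))$. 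For valence four, replace the first cell by $(\mathbf 0,(1,-\tfrac12),(2,-\tfrac23),(3,-\tfrac34))$ and $(\mathbf 0,(3,-\tfrac34),(5,-\tfrac56),(-\tfrac12,1))$. All nodes satisfy $(x+1)(y+1)=1$, i.e.\ $x+y+xy=0$, so the $\mathbf P_j$ lie in the plane $X+Y+Z=0$. There $\sum_j\alpha_j\mathbf P_j=\mathbf 0$ reduces to $\sum_j\alpha_j(x_j,y_j)=\mathbf 0$, which has a strictly positive solution because $\mathbf 0$ is interior to the planar hull of the neighbours. The patch is therefore strictly bilinearly convex in the sense of Definition~\ref{Def:BilConv}, yet $p=x+y+xy$ is non-constant with $p_i=p_i^{\min}=p_i^{\max}=0$.

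Hence the implication ``strictly bilinearly convex $\Rightarrow$ strict bilinear maximum principle'' is false without a non-degeneracy assumption, and this step cannot be proved as stated. The paper's proof has the same hole: it simply asserts that a non-constant $p$ with $p_i=0$ ``does not vanish at every node of $S_i$''. You located the critical point correctly, but the fact must be assumed, not derived. Require that the $\mathbf P_j$ span $\mathbb{R}^3$, equivalently that no non-zero $bx+cy+dxy$ (in coordinates centred at $\mathbf x_i$) vanishes on all of $S_i$. This holds whenever some cell of the patch is an axis-parallel rectangle, since vanishing at $(h_1,0),(0,h_2),(h_1,h_2)$ with $h_1h_2\neq 0$ forces $b=c=d=0$. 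In particular it holds for all patches of Section~\ref{sec:axesaligned}. With that hypothesis your argument is complete; the non-strict equivalence and the converse strict direction do not need it. Your closing remark is also unnecessary: a point of a polytope lies in its relative interior iff it is a convex combination with all weights positive, in any dimension, which is exactly how Definition~\ref{Def:BilConv} is phrased.
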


\begin{proof}
Without loss of generality, let the coordinate system be shifted such that the central node $i$ is located at the origin, therefore $(x_i, y_i) = (0,0)$. Under this translation, any general bilinear function $p \in \mathbb{Q}_1(\mathbb{R}^2)$ that vanishes at the internal node ($p_i = 0$) takes the form:
$$
p(x,y) = bx + cy + dxy
$$
Let $ \mathbf{P_j} = (x_j, y_j, x_j y_j)^\top \in \mathbb{R}^3$ denote the lifted coordinates of the neighboring nodes $j \in S_i$, and let $\mathcal{C} = \text{conv}(\{\mathbf{P_j}\}_{j \in S_i})$ be the convex hull of these points. Since $S_i$ is a finite index set of neighboring nodes, $\mathcal{C}$ is a compact (closed and bounded) convex set in $\mathbb{R}^3$.

\textbf{($\impliedby$):} 
Suppose the mesh is bilinearly convex. Then the origin $\mathbf{0} = (0,0,0)^\top \in \mathcal{C}$ by definition. Algebraically, this is equivalent to the existence of a set of convex coefficients $\{\alpha_j\}_{j \in S_i}$ such that $\alpha_j \geq 0$, $\sum_{j \in S_i} \alpha_j = 1$, and
$$
\sum_{j \in S_i} \alpha_j \mathbf{P_j} = \mathbf{0}.
$$
Taking the inner product with an arbitrary $\mathbf{v} = (b,c,d)^\top \in \mathbb{R}^3$ (representing coefficients), both sides of the equality gives
$$
\mathbf{v} \cdot \sum_{j \in S_i} \alpha_j \mathbf{P_j} = \sum_{j \in S_i} \alpha_j (\mathbf{v} \cdot \mathbf{P_j}) = \sum_{j \in S_i} \alpha_j p(x_j, y_j) = 0
$$
	Since $\alpha_j \geq 0$ and adds up to $1$, the function values at the nodes $p(x_j, y_j)$ cannot all be strictly positive or negative unless they are all zero. Consequently, there exists at least one $j_+ \in S_i$ with  $p_{j_+}\geq 0$ and at least one $j_- \in S_i$ with  $p_{j_-}\leq 0$. This implies $p_i^{\min} \leq 0 \leq p_i^{\max}$, ( i.e. Eq.~\eqref{eq:BMP2}) which in turn, by Lemma~\ref{lem:BMP2},  is equivalent to Eq.~\eqref{eq:BMP}.
	
\textbf{ ($\implies$):} 
We will prove the converse by contraposition. Suppose the mesh patch is \textit{not} bilinearly convex. This implies  $\mathbf{0} \notin \mathcal{C}$.
	
Since $\mathcal{C}$ is a closed convex set and disjoint from  $\{\mathbf{0}\}$ (a closed convex set) , by the Hyperplane Separation Theorem, there exists a hyperplane strictly separating them. Which implies there exists a non-zero vector $\mathbf{v} = (b, c, d)^\top \in \mathbb{R}^3$ and $\epsilon > 0$ such that:
$$
\mathbf{v} \cdot \mathbf{0} < \epsilon < \mathbf{v} \cdot \mathbf{p} \quad \forall \mathbf{p} \in \mathcal{C}.
$$

Evaluating this at the extreme points $\mathbf{P_j} \in \mathcal{C}$ for all $j \in S_i$ gives:
\begin{equation}
\mathbf{v} \cdot \mathbf{P_j} > 0 \quad \forall j \in S_i. \label{eq:separation}
\end{equation}
Now, we construct a bilinear function $p \in \mathbb{Q}_1(\mathbb{R}^2)$ corresponding to the vector $\mathbf{v}$ :
$$
p(x,y) = bx + cy + dxy.
$$
By construction, this function vanishes at the central node ($p_i = p(0,0) = 0$). Evaluating $p$ at any neighboring node $j \in S_i$ yields its value as a simple dot product:
$$
p(x_j, y_j) = bx_j + cy_j + dx_j y_j = \mathbf{v} \cdot \mathbf{P_j}.
$$
Applying the strict separation inequality Eq.~\eqref{eq:separation}, we find that:
$$
p(x_j, y_j) > 0 \quad \forall j \in S_i.
$$

Because the function value is strictly positive at every neighboring node in the patch, the minimum value over the neighborhood must also be strictly positive:
$$
p_i^{\min} = \min_{j \in S_i} p(x_j, y_j) > 0.
$$

This contradicts Eq.~\eqref{eq:BMP2} and consequently condition Eq.~\eqref{eq:BMP} by Lemma~\ref{lem:BMP2} . Thus, if a mesh satisfies condition Eq.~\eqref{eq:BMP2}, it is bilinearly convex.

It remains to identify the strict case. If the patch is strictly bilinearly convex, let $p \in \mathbb{Q}_1(\omega_i)$ be non-constant with $p_i = 0$. Then $p$ does not vanish at every node of $S_i$, and since $\sum_{j \in S_i} \alpha_j p_j = 0$ with all $\alpha_j > 0$, the values $p_j$ cannot all be of one sign, so $p_i^{\min} < 0 < p_i^{\max}$. Conversely, if the origin lies in $\mathcal{C}$ but not in its relative interior, there is a supporting hyperplane, that is, a non-zero $\mathbf{v} = (b,c,d)^\top$ with $\mathbf{v} \cdot \mathbf{P_j} \geq 0$ for every $j \in S_i$ and $\mathbf{v} \cdot \mathbf{P}_{j_0} > 0$ for at least one $j_0 \in S_i$. The associated $p(x,y) = bx + cy + dxy$ is non-constant, vanishes at the internal node, and satisfies $p_i^{\min} = 0 = p_i$, so the bilinear maximum principle fails in its strict form.
\end{proof}

\subsection{Explicit Calculation for Bilinearity Preserving Limiters}\label{sec:explicitmu}
Although Proposition~\ref{Prop:Existence} ensures the existence of $\mu_i$, the next lemma gives an explicit choice for the same.

\begin{lemma}[Geometric parameter for bilinearly convex patches]
Let $\omega_i$ be a bilinearly convex mesh patch, let $\{\alpha_j\}_{j\in S_i}$ be convex weights representing the origin as in Eq.~\eqref{eq:convexrep}, and 	set $T = \{ j \in S_i : \alpha_j > 0 \}$. If, for every $p \in \mathbb{Q}_1(\omega_i)$, the minimum $p_i^{\min}$ is attained at a node of $T$, then
\begin{equation*}
		p_i - p_i^{\min} \;\le\; \mu_i \,\bigl( p_i^{\max} - p_i \bigr),
		\qquad
		\mu_i = \frac{1}{\alpha_{\min}} - 1,
		\qquad
		\alpha_{\min} = \min_{j \in T} \alpha_j .
\end{equation*}
\label{general_formula}
\end{lemma}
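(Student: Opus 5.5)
Since both sides of the inequality are invariant under $p \mapsto p + c$, we may shift the coordinates so that $\mathbf{x}_i$ is the origin and assume $p_i = 0$. Then $p(x,y) = bx + cy + dxy$, and $p_j = \mathbf{v}\cdot\mathbf{P}_j$ with $\mathbf{v} = (b,c,d)^\top$, exactly as in the proof of Theorem~\ref{thm:BMPiffBC}. The claim becomes
$$-p_i^{\min} \le \mu_i\, p_i^{\max}.$$
The only information we use is the linear identity obtained by pairing Eq.~\eqref{eq:convexrep} with $\mathbf{v}$:
$$\sum_{j \in T} \alpha_j p_j = 0.$$
Nodes outside $T$ carry weight zero and drop out.

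Fix $p$ and let $k \in T$ be a node where $p_k = p_i^{\min}$; such a node exists by hypothesis. If $p_k \ge 0$, then $p_i^{\min} \ge 0 = p_i$, and the left side is nonpositive. By Theorem~\ref{thm:BMPiffBC} we also have $p_i^{\max} \ge 0$, so the inequality holds trivially. We may therefore assume $p_k < 0$. Isolating $k$ in the identity gives
$$\alpha_k(-p_k) = \sum_{j \in T\setminus\{k\}} \alpha_j p_j \le \Bigl(\sum_{j\in T\setminus\{k\}} \alpha_j\Bigr) p_i^{\max} = (1-\alpha_k)\, p_i^{\max},$$
using $p_j \le p_i^{\max}$ and $\sum_{j\in T}\alpha_j = 1$.

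Dividing by $\alpha_k > 0$ yields $p_i - p_i^{\min} = -p_k \le (1/\alpha_k - 1)\,p_i^{\max}$. Since $p_i^{\max} \ge 0$ (it is at least $\alpha$-weighted-average zero, or follows from the displayed inequality itself) and $1/\alpha_k - 1 \le 1/\alpha_{\min} - 1$, the bound with $\mu_i = 1/\alpha_{\min} - 1$ follows. Undoing the translation gives the stated inequality for general $p_i$.

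The main obstacle is that the inequality is used with the minimum attained at a node of $T$. If the minimum sat outside $T$, the identity would give no control over it, which is exactly why the hypothesis is imposed. One also needs to confirm that $p_i^{\max}$ computed over $S_i\cup\{i\}$ agrees with the quantity used here. Since $p_i = 0$ and $p_i^{\max} \ge 0$, including node $i$ does not change the maximum, and similarly for the minimum when $p_k < 0$. A last minor point is the degenerate case $|T| = 1$: there $\alpha_{\min} = 1$ and $\mu_i = 0$, and the origin coincides with a lifted neighbour, so $p_k = 0$ and the bound is consistent.
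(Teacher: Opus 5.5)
Your proof is correct and follows essentially the same route as the paper: pair the convex representation with $\mathbf{v}=(b,c,d)^\top$ to get $\sum_{j\in T}\alpha_j p_j=0$, isolate the minimizing node $k\in T$, bound the remaining nodal values by $p_i^{\max}$, and divide by $\alpha_k$. Your version is slightly more careful than the paper's, which divides by $p_i^{\max}$ after only asserting that it is positive. You avoid that division, treat the trivial case $p_i^{\min}\ge 0$ separately, and check that the $S_i$ and $S_i\cup\{i\}$ conventions for the extrema agree.
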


\begin{proof}
Similar to the Proposition~\ref{Prop:Existence}, it is enough to look at $\frac{-p_i^{\min}}{p_i^{\max}}$ for any shifted bilinear function $p(x,y) = bx + cy + dxy$.
	
As the origin belongs to the convex hull of the lifted nodal values, there exist convex weights $\{\alpha_j\}_{j \in S_i}$ such that $\sum_{j \in S_i} \alpha_j = 1$ with $\alpha_j \geq 0$, satisfying:
$$
\sum_{j \in S_i} \alpha_j p_j = 0.
$$
	
Let the minimum over the patch occur at the $k$th node, which by hypothesis
belongs to $T$, so that $\alpha_k > 0$. Separating the node with minimum value from the summation gives:
$$
\sum_{j \in S_i \setminus \{k\}} \alpha_j p_j + \alpha_k p_i^{\min} = 0.
$$
	
Since $p_j \leq p_i^{\max} \quad  \forall j \in S_i$, replacing $p_j$'s with the maximum bound gives the inequality:
$$
\left( \sum_{j \in S_i \setminus \{k\}} \alpha_j \right) p_i^{\max} \geq -\alpha_k p_i^{\min}.
$$
	
Since $\alpha_k$ and $p_i^{\max}$ are positive, dividing both sides yields:
$$
\frac{\sum_{j \in S_i \setminus \{k\}} \alpha_j}{\alpha_k} \geq \frac{-p_i^{\min}}{p_i^{\max}}.
$$
	
Using the partition of unity property ($\sum_{j \in S_i \setminus \{k\}} \alpha_j = 1 - \alpha_k$), we obtain the final bound:
$$
\frac{-p_i^{\min}}{p_i^{\max}} \leq \frac{1 - \alpha_k}{\alpha_k} = \frac{1}{\alpha_k} - 1 \leq \frac{1}{\alpha_{\min}} - 1,
$$
where $\alpha_{\min} = \min_{j \in T} \alpha_j > 0$.
\end{proof}

\subsubsection{Exact Representation}\label{sec:exacrep}

The difficulty of representing physical bilinear functions using mapped \(\mathbb Q_1\) finite elements is well known (see \cite{ABF02}). We denote $\text{Aff}(\hat{K})$ for the class of affine functions mapping $\hat{K}$ to the physical elements $K \in \mathcal{T}_h $ and $\text{Bi}(\hat{K})$ for the class of isoparametric maps. 

We define the mapping classes as
\begin{equation}
\text{Aff}(\hat{K}) = \left\{ F: \hat{K} \to \mathbb{R}^2 \;\middle|\; F(\xi, \eta) = \begin{pmatrix} c_1 + a_1\xi + a_2\eta \\ c_2 + b_1\xi + b_2\eta \end{pmatrix} \;\middle|\;
\begin{aligned}
	&c_1, c_2, a_1, a_2, b_1, b_2 \in \mathbb{R}, \\
	&a_1 b_2 - a_2 b_1 \neq 0
\end{aligned} \right\},
	\label{eq:Aff_map_class}
\end{equation}
and
\begin{equation}
	\text{Bi}(\hat{K}) = \left\{ F: \hat{K} \to \mathbb{R}^2 \;\middle|\; F(\xi, \eta) = \begin{pmatrix} c_1 + a_1\xi + a_2\eta + a_3\xi\eta \\ c_2 + b_1\xi + b_2\eta + b_3\xi\eta \end{pmatrix} \;\middle|\;
	\begin{aligned}
		&c_i, a_i, b_i \in \mathbb{R}, \\
		&J(\xi,\eta) \neq 0, \; \forall (\xi,\eta) \in \hat{K}
	\end{aligned} \right\},
	\label{eq:Bi_map_class}
\end{equation}
where $J$ is the Jacobian of the transformation.  Here is an simple example of non-representability of bilinear function on a bilinear element mapped by a simple $\pi/4$ rotation map from $ \text{Aff}(\hat{K})$.  

\textbf{Example:} Consider the reference element as the unit square $\hat{K} = [-1,1]^2$ with local coordinates $(\xi, \eta)$. The standard bilinear finite element reference space is
$$
\hat{V}_{\hat{K}} = \text{span}\{1, \xi, \eta, \xi\eta\}.
$$
Consider a physical element $K$ which is a $\pi/4$ anti-clockwise rotation of $\hat{K}$ (see Fig.~\ref{fig:rotation}).  The corresponding map $F: \hat{K} \to K$ is defined as:
\begin{align*}
x(\xi, \eta) &= \frac{1}{\sqrt{2}}(\xi - \eta), \\
y(\xi, \eta) &= \frac{1}{\sqrt{2}}(\xi + \eta).
\end{align*}
Let's consider the function $u(x,y) = xy$ and it's approximation in $V_K$. The pullback of this function to the reference element is given by
$$
\hat{u}(\xi, \eta) = u(x(\xi, \eta), y(\xi, \eta)) = \left( \frac{1}{\sqrt{2}}(\xi - \eta) \right) \left( \frac{1}{\sqrt{2}}(\xi + \eta) \right).
$$
Expanding this product yields:
\begin{equation}
	\hat{u}(\xi, \eta) = \frac{1}{2}(\xi^2 - \eta^2).
\end{equation}
The resulting function contains the quadratic terms ($\xi^2$, $\eta^2$). Since $\xi^2 - \eta^2 \notin \hat{V}_{\hat{K}}$, the pullback $\hat{u}(\xi, \eta)$ does not belong to the reference space $\hat{V}_{\hat{K}}$. Therefore, the mapped finite element $V_K$ space cannot represent the bilinear function $u = xy$ exactly.

\begin{figure}[tbp]

\begin{center}
	\begin{tikzpicture}[
		>=Stealth,
		node distance=5cm,
		point/.style={circle, fill=black, inner sep=1.5pt},
		axes/.style={gray, thin, ->},
		refDomain/.style={thick, fill=blue!10, draw=blue!80!black},    
		mappedDomain/.style={thick, fill=red!10, draw=red!80!black}
		]
		
		\begin{scope}[local bounding box=ref_element]
			\draw[axes] (-2, 0) -- (2, 0) node[right, black] {$\xi$};
			\draw[axes] (0, -2) -- (0, 2) node[above, black] {$\eta$};
			
			\draw[refDomain] (-1, -1) rectangle (1, 1);
			\node[blue!80!black, font=\large] at (0,0) {$\hat{K}$};
			
			\node[point, label={below left:$(-1,-1)$}] at (-1,-1) {};
			\node[point, label={below right:$(1,-1)$}] at (1,-1) {};
			\node[point, label={above right:$(1,1)$}] at (1,1) {};
			\node[point, label={above left:$(-1,1)$}] at (-1,1) {};
		\end{scope}
		
		\begin{scope}[xshift=7cm, local bounding box=phys_element]
			\draw[axes] (-2, 0) -- (2, 0) node[right, black] {$x$};
			\draw[axes] (0, -2) -- (0, 2) node[above, black] {$y$};
			
			\draw[mappedDomain] (0,-1.41) -- (1.41,0) -- (0,1.41) -- (-1.41,0) -- cycle;
			\node[red!80!black, font=\large] at (0,0) {$K$};
			
			\node[point, label={below right:$(0,-\sqrt{2})$}] at (0,-1.41) {};
			\node[point, label={above right:$(\sqrt{2},0)$}] at (1.41,0) {};
			\node[point, label={above left:$(0,\sqrt{2})$}] at (0,1.41) {};
			\node[point, label={below left:$(-\sqrt{2},0)$}] at (-1.41,0) {};
		\end{scope}
		
		\draw[->, thick, blue!80!black] 
		(2.2, 0.8) to[bend left=25] 
		node[midway, above, font=\small] {$F$} 
		(4.8, 0.8);
		
		\draw[->, thick, red!80!black] 
		(4.8, -0.8) to[bend left=25] 
		node[midway, below, font=\small] {$u \circ F^{-1}$ } 
		(2.2, -0.8);
		
	\end{tikzpicture}
\end{center}
.\caption{Affine Mapping of a $\mathbb{Q}_1$ element by a $\dfrac{\pi}{4}$ rotation}
\label{fig:rotation}
\end{figure}

If the exact solution of Eq.~\eqref{MainEq} is bilinear, then a Galerkin finite element method using \(\mathbb Q_1\) elements on a rotated affinely mapped mesh, such as the one considered above, cannot in general represent it exactly. The next theorem establishes necessary and sufficient conditions for exact representation.

\begin{theorem}\label{thm:exactrep}
	A global space $V_h$ of $\mathbb{Q}_1$ finite elements can exactly represent a physical bilinear function $u(x,y) = a+ bx+ cy+  dxy$ with $d\neq 0$ if and only if the physical elements have their edges aligned with the coordinate axes.
\end{theorem}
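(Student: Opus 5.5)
The plan is to reduce the global statement to an elementwise one, and then to compare coefficients of the pulled-back function on each element. Since $u$ is continuous, $u\in V_h$ holds if and only if, for every $K\in\mathcal{T}_h$ with reference map $F_K\in\mathrm{Bi}(\hat K)$ (this class contains $\mathrm{Aff}(\hat K)$), the pullback $\hat u_K = u\circ F_K$ lies in $\hat V_{\hat K}=\mathrm{span}\{1,\xi,\eta,\xi\eta\}$. If this holds on every element, the elementwise functions $\hat u_K\circ F_K^{-1}$ coincide with $u$ and therefore glue continuously. In that case $u$ equals its nodal interpolant, so $u\in V_h$. The theorem is therefore equivalent to the following claim: $u\circ F\in\hat V_{\hat K}$ if and only if $F$ maps $\hat K=[-1,1]^2$ onto an axis-parallel rectangle.

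\textbf{Sufficiency.} If $K$ is an axis-parallel rectangle, then up to the symmetry $\xi\leftrightarrow\eta$ of $\hat K$ the map has the form $F(\xi,\eta)=(c_1+a_1\xi,\;c_2+b_2\eta)$. In that case
\begin{equation*}
u\circ F = a + b(c_1+a_1\xi) + c(c_2+b_2\eta) + d(c_1+a_1\xi)(c_2+b_2\eta),
\end{equation*}
and this visibly lies in $\hat V_{\hat K}$. The interchanged case $F(\xi,\eta)=(c_1+a_2\eta,\;c_2+b_1\xi)$ is handled identically.

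\textbf{Necessity.} Write $F$ in the general form \eqref{eq:Bi_map_class} and expand $u\circ F$. The terms $a+bx+cy$ are already in $\hat V_{\hat K}$, so everything outside the space comes from $d\,xy$. With $x=c_1+a_1\xi+a_2\eta+a_3\xi\eta$ and $y=c_2+b_1\xi+b_2\eta+b_3\xi\eta$, the monomials outside $\hat V_{\hat K}$ and their coefficients are:
\begin{equation*}
\xi^2:\ d\,a_1b_1,\qquad \eta^2:\ d\,a_2b_2,\qquad \xi^2\eta:\ d\,(a_1b_3+a_3b_1),\qquad \xi\eta^2:\ d\,(a_2b_3+a_3b_2),\qquad \xi^2\eta^2:\ d\,a_3b_3 .
\end{equation*}
Since $d\neq0$, all five of the following must vanish: $a_1b_1$, $a_2b_2$, $a_1b_3+a_3b_1$, $a_2b_3+a_3b_2$ and $a_3b_3$. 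We also use the invertibility of the Jacobian at the centre $(0,0)$ of $\hat K$, which gives $a_1b_2-a_2b_1\neq0$.

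The argument then splits into two cases.
\begin{itemize}
\item If $a_1\neq0$, then $a_1b_1=0$ forces $b_1=0$. The Jacobian condition then gives $b_2\neq0$, so $a_2b_2=0$ forces $a_2=0$. Next, $a_1b_3+a_3b_1=a_1b_3=0$ gives $b_3=0$, and $a_2b_3+a_3b_2=a_3b_2=0$ gives $a_3=0$. Hence $F(\xi,\eta)=(c_1+a_1\xi,\;c_2+b_2\eta)$.
\item If $a_1=0$, then the Jacobian condition gives $a_2\neq0$ and $b_1\neq0$. Then $a_2b_2=0$ forces $b_2=0$. Next, $a_1b_3+a_3b_1=a_3b_1=0$ gives $a_3=0$, and $a_2b_3+a_3b_2=a_2b_3=0$ gives $b_3=0$. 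Hence $F(\xi,\eta)=(c_1+a_2\eta,\;c_2+b_1\xi)$.
\end{itemize}
In both cases the images of the reference edges $\xi=\pm1$ and $\eta=\pm1$ are segments parallel to the coordinate axes. Thus $K$ is an axis-parallel rectangle. Applying this to every $K$ gives the claim, and the $\pi/4$ rotation example is a special instance (there $a_1b_1=1/2\neq0$).

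The algebra above is routine; I expect the main care to be needed in the reduction step. First, one must make precise that ``$V_h$ represents $u$'' means $u\in V_h$, and that this is equivalent to the elementwise pullback condition for every element. Second, the non-degeneracy condition must be used only at a single point of $\hat K$, not for all $(\xi,\eta)$. Using the centre of $[-1,1]^2$ gives exactly $a_1b_2-a_2b_1\neq0$. For a reference square $[0,1]^2$ one would evaluate at the vertex $(0,0)$ instead, which yields the same inequality.
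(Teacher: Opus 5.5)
Your proof is correct, and its core is the same as the paper's. You pull $u$ back, isolate the coefficients of the monomials outside $\mathrm{span}\{1,\xi,\eta,\xi\eta\}$, and combine their vanishing with $a_1b_2-a_2b_1\neq 0$ in a two-case analysis. The difference is in scope. The paper's proof works only with affine maps $F\in\mathrm{Aff}(\hat K)$, i.e.\ parallelogram cells, where only $da_1b_1\,\xi^2$ and $da_2b_2\,\eta^2$ appear. It leaves general quadrilaterals to Corollary~\ref{Cor:Isoparametric}, which is stated without proof. You work directly in $\mathrm{Bi}(\hat K)$: the additional coefficients of $\xi^2\eta$, $\xi\eta^2$ and $\xi^2\eta^2$ force $a_3=b_3=0$ once the affine part is settled. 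You also need non-degeneracy only at the centre of $\hat K$, where the Jacobian reduces to the affine determinant. This gives the theorem for genuinely general quadrilateral meshes, which is what the statement covers and what the randomly perturbed grids of Section~\ref{subsec:grids} use. It also yields the correct form of Corollary~\ref{Cor:Isoparametric}. Since $\mathrm{Aff}(\hat K)\subset\mathrm{Bi}(\hat K)$, an isoparametric space does contain $u$ when every cell is an axis-parallel rectangle, and fails to as soon as one cell is not; the corollary as literally stated is therefore too strong. You also make the local-to-global reduction explicit, which the paper compresses into its final sentence. One step is taken for granted by both you and the paper: that a map in $\mathrm{Bi}(\hat K)$ onto an axis-parallel rectangle has one of your two affine forms. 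It holds because $F_K$ is the bilinear interpolant of the vertices of $K$ taken in cyclic order, and on a rectangle that interpolant is affine of exactly this form.
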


\begin{proof}
Let $K$ be a physical element of the mesh and $\hat{K}$ be the standard $[-1,1]^2$ reference element. Let $F: \hat{K} \to K$ be an arbitrary affine mapping from the reference square to a physical parallelogram:
\begin{align*}
x(\xi, \eta) &= c_1 + a_1\xi + a_2\eta, \\
y(\xi, \eta) &= c_2 + b_1\xi + b_2\eta.
\end{align*}
Let the mapping be non-degenerate, then Jacobian determinant has to be non-zero: $J = a_1b_2 - a_2b_1 \neq 0$.
	
Pulling back an arbitrary bilinear function $u(x,y) = a + bx + cy + dxy$ to the reference space gives:
\begin{equation}
\hat{u}(\xi, \eta) = a + b(c_1 + a_1\xi + a_2\eta) + c(c_2 + b_1\xi + b_2\eta) + d(c_1 + a_1\xi + a_2\eta)(c_2 + b_1\xi + b_2\eta).\label{eq:expansion}
\end{equation}
Expanding this expression, observe that the first, second, third as well as the decoupled products from the cross-term belong to $\text{Span}\{1, \xi, \eta, \xi\eta\}$. Isolating the remaining higher-order quadratic terms:
\begin{equation}
\hat{u}(\xi, \eta) = [\text{terms belonging to } \hat{\mathbb{Q}}_1(\hat{K})] + da_1b_1\xi^2 + da_2b_2\eta^2.	\label{eq:terms}
\end{equation}
	
\textbf{($\implies$):} If $\hat{u}(\xi, \eta)$ is exactly representable, we need $\hat{u}(\xi, \eta) \in \hat{\mathbb{Q}}_1(\hat{K})$. Therefore, the coefficients of the unrepresentable terms must vanish identically, $i.e$,
$$
da_1b_1 = 0 \quad \text{and} \quad da_2b_2 = 0.
$$
Since $d \neq 0$, this requires $a_1b_1 = 0$ and $a_2b_2 = 0$. Given that $J = a_1b_2 - a_2b_1 \neq 0$, let's look at the consequent algebraic constraints:
	\begin{itemize}
		\item If $a_1 = 0$, then $b_1 \neq 0$ (to keep $J \neq 0$). This implies $a_2 \neq 0$, which in turn implies $b_2 = 0$. The mapping reduces to $x = c_1 + a_2\eta$ and $y = c_2 + b_1\xi$.
		\item If $b_1 = 0$, then $a_1 \neq 0$ (to keep $J \neq 0$). This forces $b_2 \neq 0$, which in turn forces $a_2 = 0$. The mapping reduces to $x = c_1 + a_1\xi$ and $y = c_2 + b_2\eta$.
	\end{itemize}
	In both cases, the physical $x$-coordinate depends only on one local coordinate, and the physical $y$-coordinate depends only on the other. This implies same $x$-coordinate along two opposing edges and same $y$-coordinate along the other two opposing edges. Geometrically, $K$ is a rectangle whose edges are parallel to the $x$ and $y$ axes.
	
	\textbf{($\impliedby$):} If the element edges are axis-aligned, assume without loss of generality, the mapping is $x = c_1 + a_1\xi$ and $y = c_2 + b_2\eta$ (implying $a_2 = 0$ and $b_1 = 0$). Substituting these expressions into Eq.~\eqref{eq:expansion} evaluates $d a_1(0)\xi^2 + d(0)b_2\eta^2 = 0$. Consequently, the unrepresentable components vanish completely, confirming that the pullback $\hat{u}(\xi, \eta)$ belongs to $ \hat{\mathbb{Q}}_1 $. If it is true for all mesh elements, this implies the considered bilinear function belongs to the global finite element space, i.e., $u \in V_h$.	
\end{proof}

It immediately follows from the Definitions~\ref{eq:Aff_map_class}--\ref{eq:Bi_map_class} that, if we use the isoparametric mapping class $\text{Bi}(\hat{K})$ for the same, the pullback function $\hat{u}(\xi, \eta)$ involves extra terms similar to Eq.~\eqref{eq:terms} and hence the bilinear functions can't be represented using this mapping class as well. 

\begin{corollary}\label{Cor:Isoparametric}
 A global space $V_h$ of $\mathbb{Q}_1$ finite elements mapped through the isoparametric mapping class $\text{Bi}(\hat{K})$ cannot exactly represent a physical bilinear function $u(x,y) = a+ bx+ cy+ dxy$ with $d\neq 0$.
\end{corollary}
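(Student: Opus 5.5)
The plan is to repeat the pull-back computation from the proof of Theorem~\ref{thm:exactrep}, now for a general map $F\in\text{Bi}(\hat K)$:
\begin{align*}
x(\xi,\eta) &= c_1 + a_1\xi + a_2\eta + a_3\xi\eta,\\
y(\xi,\eta) &= c_2 + b_1\xi + b_2\eta + b_3\xi\eta .
\end{align*}
I substitute these into $u = a + bx + cy + dxy$. The terms $a$, $bx$ and $cy$ already lie in $\hat{\mathbb{Q}}_1(\hat K)=\text{span}\{1,\xi,\eta,\xi\eta\}$, so only the product $d\,xy$ needs attention. Expanding it and collecting the monomials outside $\hat{\mathbb{Q}}_1$ gives
\begin{equation*}
d\bigl[a_1b_1\xi^2 + a_2b_2\eta^2 + (a_1b_3+a_3b_1)\xi^2\eta + (a_2b_3+a_3b_2)\xi\eta^2 + a_3b_3\xi^2\eta^2\bigr].
\end{equation*}
Since $1,\xi,\eta,\xi\eta,\xi^2,\eta^2,\xi^2\eta,\xi\eta^2,\xi^2\eta^2$ are linearly independent on $\hat K$, exact representation on $K$ holds if and only if each of these five coefficients vanishes. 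Because $d\neq0$, this means each of $a_1b_1$, $a_2b_2$, $a_1b_3+a_3b_1$, $a_2b_3+a_3b_2$ and $a_3b_3$ must be zero.

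The key step is to show that these equations, together with $J\neq0$ on $\hat K$, force $a_3=b_3=0$. From $a_3b_3=0$, assume without loss of generality that $b_3=0$. The two mixed conditions then reduce to $a_3b_1=0$ and $a_3b_2=0$. If $a_3\neq0$, this gives $b_1=b_2=0$. Then $y$ is constant on $K$, so the Jacobian $J=\partial_\xi x\,\partial_\eta y-\partial_\eta x\,\partial_\xi y$ vanishes identically, which contradicts $F\in\text{Bi}(\hat K)$. Hence $a_3=0$, and the case $a_3=0$ is symmetric, giving $b_3=0$. Therefore $F$ is affine, and we are back in the setting of Theorem~\ref{thm:exactrep}. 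The remaining conditions $a_1b_1=a_2b_2=0$ then force $K$ to be an axis-aligned rectangle.

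The main obstacle is the literal wording of the corollary. $\text{Bi}(\hat K)$ contains the affine maps (take $a_3=b_3=0$), including axis-aligned rectangles, and on those $xy$ \emph{is} representable by Theorem~\ref{thm:exactrep}. So what I would actually prove is the following sharpened form. Whenever the isoparametric map is genuinely bilinear ($(a_3,b_3)\neq(0,0)$), or more generally whenever the element is not an axis-aligned rectangle, the pull-back $\hat u$ lies outside $\hat{\mathbb{Q}}_1$. Consequently $V_h$ represents $u$ exactly only if every element is an axis-aligned rectangle, in which case the isoparametric map is affine. The global statement then follows elementwise, since $u\in V_h$ requires $u|_K\circ F_K\in\hat{\mathbb{Q}}_1$ for each $K$.
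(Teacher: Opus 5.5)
Your proposal is correct. It follows the paper's route: the paper justifies the corollary only by the remark before it, namely that pulling $u$ back through $F\in\text{Bi}(\hat K)$ produces ``extra terms similar to Eq.~\eqref{eq:terms}''. Your version is more complete in two respects.

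First, the paper never checks that the five coefficients $a_1b_1$, $a_2b_2$, $a_1b_3+a_3b_1$, $a_2b_3+a_3b_2$, $a_3b_3$ cannot all vanish at once. Your argument supplies exactly this missing step: they vanish together only if $a_3=b_3=0$, since otherwise $x$ or $y$ is constant and $J\equiv 0$. That reduces the question to Theorem~\ref{thm:exactrep}.

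Second, your objection to the literal wording is justified. $\text{Bi}(\hat K)$ contains the affine maps of axis-aligned rectangles, and on those $xy\in V_h$ by Theorem~\ref{thm:exactrep}, so the corollary is false as stated. The form you prove is the correct statement: exact representation holds if and only if every cell is an axis-aligned rectangle, in which case the isoparametric map is affine. It is also the form the paper actually relies on for the randomly perturbed grids in Section~\ref{subsec:grids}.
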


\begin{corollary}\label{Cor:GalerkinExact}
Suppose that a consistent Galerkin finite element problem has a unique discrete solution and a bilinear analytical solution
$u(x,y) = a+bx+cy+dxy$ with $d\neq 0$. Then the discrete finite element solution $u_h$ using $\mathbb{Q}_1$ elements is exact, i.e., $u_h=u$, if and only if the mesh is axis-aligned.
\end{corollary}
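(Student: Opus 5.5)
The plan is to reduce Corollary~\ref{Cor:GalerkinExact} to Theorem~\ref{thm:exactrep} using two standard facts: Galerkin orthogonality (consistency) and uniqueness of the discrete solution. Throughout, ``axis-aligned'' means that every cell is a rectangle with edges parallel to the coordinate axes. This is exactly the condition characterized in Theorem~\ref{thm:exactrep}.

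\textbf{Sufficiency.} Assume the mesh is axis-aligned. By Theorem~\ref{thm:exactrep}, the bilinear function $u=a+bx+cy+dxy$ lies in $V_h$; more precisely, $u$ satisfies the nonhomogeneous Dirichlet data, so $u - u_{h,\mathrm{D}}$ lies in the discrete test space whenever the boundary data are interpolated by nodal values. Since $u$ is bilinear, its boundary trace coincides with its nodal interpolant on the boundary edges. Consistency gives $a(u,v_h)=g(v_h)$ for all $v_h$ in the discrete test space. Hence $u$ is itself a solution of the discrete problem, and uniqueness forces $u_h=u$.

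\textbf{Necessity.} Assume $u_h=u$. Then $u\in V_h$, because $u_h$ is by definition an element of the discrete trial space. Theorem~\ref{thm:exactrep}, applied with $d\neq0$, then shows that every cell must be axis-aligned.

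\textbf{Main obstacle.} The delicate point is that Theorem~\ref{thm:exactrep} is stated for affinely mapped cells, whereas general quadrilateral meshes use the isoparametric class $\mathrm{Bi}(\hat K)$. I would close this gap by invoking Corollary~\ref{Cor:Isoparametric}, or more carefully by checking elementwise that $u|_K\circ F_K\in\hat{\mathbb Q}_1$ forces the map to be axis-aligned. Concretely, the $\xi^2$ and $\eta^2$ coefficients of the pullback are $d\,a_1b_1$ and $d\,a_2b_2$, respectively. Terms of the form $d\,a_3b_1\,\xi^2\eta$ and similar arise as well, and these force $a_3$ and $b_3$ to vanish given the Jacobian condition. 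After that, the affine argument applies.

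A secondary point is to make sure that the boundary data are treated consistently in the definition of ``exact''. For this I would assume nodal interpolation of $u_{\mathrm D}$, which is exact for a bilinear $u$ along axis-parallel edges.
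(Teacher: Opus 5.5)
Your proposal is correct and follows the paper's own argument. On an axis-aligned mesh, Theorem~\ref{thm:exactrep} gives $u\in V_h$, and consistency plus uniqueness then give $u_h=u$; conversely, $u_h=u$ puts $u$ in $V_h$, so Theorem~\ref{thm:exactrep} forces axis-alignment.

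Your extra care covers two points the paper leaves implicit, and it is well placed. The boundary-interpolation remark is right. For isoparametric cells, the elementwise check is the route to take: the $\xi^2\eta$, $\xi\eta^2$, $\xi^2\eta^2$ coefficients are $d(a_1b_3+a_3b_1)$, $d(a_2b_3+a_3b_2)$, $d\,a_3b_3$, and together with $a_1b_1=a_2b_2=0$ and $J(0,0)=a_1b_2-a_2b_1\neq0$ they do force $a_3=b_3=0$. By contrast, citing Corollary~\ref{Cor:Isoparametric} as literally stated would prove too much. Since $\mathrm{Aff}(\hat K)\subset\mathrm{Bi}(\hat K)$, it would exclude axis-aligned rectangles as well.
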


\begin{proof}
By Theorem~\ref{thm:exactrep}, the exact solution $u$ belongs to the global finite element space $V_h$ if and only if the mesh is axis-aligned.

If the mesh is axis-aligned, then $u\in V_h$. Since the Galerkin method is consistent, $u$ satisfies the discrete variational problem. By uniqueness of the discrete solution, it follows that $u_h=u.$

Conversely, if $u_h=u$, then $u\in V_h$. By Theorem~\ref{thm:exactrep}, the mesh must be axis-aligned.
\end{proof}

\subsubsection{Geometric Parameter for Axes Aligned Meshes}\label{sec:axesaligned}
In Lemma~\ref{general_formula} we compute $\mu_i$ as $1/\alpha_{\min}-1$ also we noticed in the previous subsection that while working with mapped elements only elements that are axis-aligned can be represented by $\mathbb{Q}_1$ elements. Now, we give the exact representation of $\mu_i$ for an axis aligned grid. For simplicity, let us take a case of a 9-node axis-aligned rectangular patch be defined by the grid lines $x \in \{-c, 0, a\}$ and $y \in \{-d, 0, b\}$ with $a, b, c, d > 0$ (see Fig.~\ref{Rect}). The central node $i$ is located at the origin $(0,0)$. The four corner nodes of the patch are denoted as $V_1 = (a,b)$, $V_2 = (-c,b)$, $V_3 = (-c,-d)$, and $V_4 = (a,-d)$. From now on we refer to this as 4-Quad patch.
\begin{figure}[tbp]
\begin{center}
	\begin{tikzpicture}[
		>=Stealth,
		dot/.style={circle, fill=black, inner sep=2pt}
		]
		
		\def\a{3}
		\def\b{2.5}
		\def\c{2.5}
		\def\d{2}
		
		\fill[gray!10] (-\c, -\d) rectangle (\a, \b);
		
		\draw[thick, black] (-\c, -\d) rectangle (\a, \b);
		
		\draw[->, thick] (-\c-1.5, 0) -- (\a+1.5, 0) node[right] {$x$};
		\draw[->, thick] (0, -\d-1.5) -- (0, \b+1.5) node[above] {$y$};
		
		\node[dot, label={above right:$V_1 = (a,b)$}] at (\a, \b) {};
		\node[dot, label={above left:$V_2 = (-c,b)$}] at (-\c, \b) {};
		\node[dot, label={below left:$V_3 = (-c,-d)$}] at (-\c, -\d) {};
		\node[dot, label={below right:$V_4 = (a,-d)$}] at (\a, -\d) {};
		
		\node[dot, label={below right:$(a,0)$}] at (\a, 0) {};
		\node[dot, label={below left:$(-c,0)$}] at (-\c, 0) {};
		\node[dot, label={above left:$(0,b)$}] at (0, \b) {};
		\node[dot, label={below left:$(0,-d)$}] at (0, -\d) {};
		
		\node[dot, label={[xshift=0.1cm, yshift=0.1cm]above right:$i = (0,0)$}] at (0, 0) {};
		
	\end{tikzpicture}
\end{center}
\caption{Example of a general rectangular 4-Quad patch.}
\label{Rect}
\end{figure}

\begin{lemma}[Extrema of a Bilinear Function]\label{lem:extrema}
	For a bilinear function $p(x,y) \in \mathbb{Q}_1$ defined on the rectangular domain $R = [-c, a] \times [-d, b]$, the maximum $p^{\max}$ and the minimum $p^{\min}$ over $R$ are attained at corner points of the rectangular patch; that is, there exist corners $V, V'$ among $V_1, V_2, V_3, V_4$ with $p(V) = p^{\max}$ and $p(V') = p^{\min}$.
\end{lemma}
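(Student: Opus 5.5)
The plan is to use that a bilinear function is affine along every line parallel to a coordinate axis. First, fix $y\in[-d,b]$. The map $x\mapsto p(x,y)=(\beta+\delta y)x+(\gamma+\alpha y)$ is affine in $x$, with coefficients depending on $y$, where $p(x,y)=\gamma+\beta x+\alpha y+\delta xy$. An affine function on the interval $[-c,a]$ attains its maximum and minimum at an endpoint. Hence
\[
\max_{x\in[-c,a]} p(x,y)=\max\{p(-c,y),\,p(a,y)\}.
\]
Taking the maximum over $y$ then gives
\[
p^{\max}=\max\Bigl\{\max_{y\in[-d,b]}p(-c,y),\;\max_{y\in[-d,b]}p(a,y)\Bigr\}.
\]

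Next, restrict to each of the two vertical edges $x=-c$ and $x=a$. On these edges $y\mapsto p(x_0,y)$ is again affine in $y$, so its maximum over $[-d,b]$ is attained at $y=-d$ or $y=b$. Combining the two steps, the maximum over $R$ equals $\max\{p(V_1),p(V_2),p(V_3),p(V_4)\}$. Applying the same argument to $-p$, which is also bilinear, gives the minimum.

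Existence of the extrema over $R$ is guaranteed because $R$ is compact and $p$ is continuous, but the argument above does not even need this. It exhibits the maximum directly as a maximum over finitely many corner values, with the inequality $p(x,y)\le\max_k p(V_k)$ holding for every $(x,y)\in R$. The corner attaining this maximum is the required $V$, and similarly $V'$ for the minimum.

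There is no real obstacle here. The only point needing care is the degenerate case where the $x$-coefficient $\beta+\delta y$ vanishes for some $y$. Then $p$ is constant along that horizontal segment, and an endpoint still attains the maximum, so the endpoint claim remains valid; I would note this explicitly. I would also remark that the argument uses that the edges of $R$ are axis-aligned, which is consistent with the preceding discussion. Finally, this gives that the patch extrema $p_i^{\max}$ and $p_i^{\min}$, taken over the nine nodes, are attained at corners, which is what is needed to apply Lemma~\ref{general_formula} with $T$ containing the corners.
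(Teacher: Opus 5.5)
Your proof is correct, and it reaches the boundary by a different route from the paper's. The paper writes $p=\alpha+\beta x+\gamma y+\delta xy$ and uses $\Delta p=0$, i.e.\ the maximum principle for harmonic functions, to place the extrema of $p$ over $R$ on $\partial R$. Only then does it use that $p$ is affine along each of the four edges to reach the corners.

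You never use harmonicity. Since $p$ is affine in $x$ along \emph{every} horizontal segment, not only the two horizontal edges, you pass directly from $R$ to the two vertical edges. A second use of the same one-dimensional fact on those edges then gives the corners.

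Your route is more elementary. It shows that the lemma follows from separate affinity of $p$ rather than from $\Delta p=0$. It gives the pointwise bound $\min_k p(V_k)\le p(x,y)\le\max_k p(V_k)$ on all of $R$ without needing compactness. It also extends directly to the trilinear case on boxes used in Remark~\ref{Rem:3D}, by fixing all but one coordinate at a time.

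The paper's route connects the lemma to the harmonicity of $\mathbb{Q}_1$ functions that motivates the whole construction. But the harmonic maximum principle is not actually needed there, because the edge-wise affine argument it is combined with works equally well on interior segments.
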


\begin{proof}
	Let $p(x,y) = \alpha + \beta x + \gamma y + \delta xy$. Since $\Delta p = 0$, the extrema of $p$ lie on the boundary of the patch. Along the horizontal line $y = b$ the restriction $p(x,b) = (\alpha + \gamma b) + (\beta + \delta b)x$ is affine in $x$, so its extrema over $x \in [-c,a]$ are attained at $x = -c$ or $x = a$; the same holds along $y = -d$. Exchanging the roles of the variables, along $x = a$ and $x = -c$ the extrema are attained at $y = -d$ or $y = b$. Hence the extrema over the boundary, and therefore over $R$, are attained at the four corners.
\end{proof}
\begin{lemma}
	Let an axis-aligned rectangular domain centered at the origin be defined by the vertices $V_1=(a,b)$, $V_2=(-c,b)$, $V_3=(-c,-d)$, and $V_4=(a,-d)$ with $a,b,c,d > 0$. Denote the lifted points by $\tilde{V}_k = (x_k, y_k, x_k y_k)$ in $\R^3$. Then the origin $(0,0,0)$ can be expressed uniquely as the convex combination $\sum_{k=1}^{4} \alpha_k \tilde{V}_k = (0,0,0)$ with $\sum_{k=1}^{4} \alpha_k = 1$. The resulting weights $\alpha_k$ are  the ratios of the 2D quadrant areas diametrically opposite to each respective corner to the area of the entire domain.
	\label{lem:explicit_weight_2D}
\end{lemma}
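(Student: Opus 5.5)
We will solve the linear system for the weights directly. The unknowns $\alpha_1,\dots,\alpha_4$ must satisfy four linear equations: $\sum_k\alpha_k=1$ together with the three coordinate equations
\begin{align*}
a(\alpha_1+\alpha_4)-c(\alpha_2+\alpha_3)&=0,\\
b(\alpha_1+\alpha_2)-d(\alpha_3+\alpha_4)&=0,\\
ab\,\alpha_1-cb\,\alpha_2+cd\,\alpha_3-ad\,\alpha_4&=0.
\end{align*}
We propose the candidate weights
\begin{equation*}
\alpha_1=\frac{cd}{(a+c)(b+d)},\quad \alpha_2=\frac{ad}{(a+c)(b+d)},\quad \alpha_3=\frac{ab}{(a+c)(b+d)},\quad \alpha_4=\frac{cb}{(a+c)(b+d)}.
\end{equation*}
These are precisely the areas of the quadrants $[-c,0]\times[-d,0]$, $[0,a]\times[-d,0]$, $[0,a]\times[0,b]$ and $[-c,0]\times[0,b]$, each lying diametrically opposite to $V_1,\dots,V_4$ respectively, divided by the total area $(a+c)(b+d)$. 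The plan is to verify that these weights solve the system and then to show that the solution is unique.

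\textbf{Verification.} The four quadrant areas sum to the total area, since $cd+ad+ab+cb=(a+c)(b+d)$. Hence $\sum_k\alpha_k=1$, and all weights are strictly positive because $a,b,c,d>0$. For the $x$-equation, $a(\alpha_1+\alpha_4)$ is proportional to $ac(d+b)$ and $c(\alpha_2+\alpha_3)$ is proportional to $ca(d+b)$, so they cancel. The $y$-equation cancels symmetrically. For the lifted coordinate, the four terms become $abcd$, $-abcd$, $abcd$ and $-abcd$ (after the common denominator), which sum to zero.

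\textbf{Uniqueness.} This is the only step needing an argument. The system is $4\times4$, so uniqueness is equivalent to the coefficient matrix being nonsingular, i.e.\ to the points $\tilde V_1,\dots,\tilde V_4$ being affinely independent in $\R^3$. Equivalently, no nonzero $q=\beta_0+\beta_1x+\beta_2y+\beta_3xy$ vanishes at all four corners of the rectangle. This follows from unisolvence of $\mathbb{Q}_1$ on an axis-aligned rectangle. Concretely: if $q$ vanishes on the top edge $y=b$, its restriction there is affine in $x$ and vanishes at two distinct points $x=-c$ and $x=a$, so it is identically zero on that edge. The same holds on the bottom edge. Then for each fixed $x$, the function $q$ is affine in $y$ with two distinct zeros $y=-d$ and $y=b$, so $q\equiv0$.

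Alternatively, we can compute the determinant directly; after row operations it is a nonzero multiple of $(a+c)^2(b+d)^2$. We expect this uniqueness step to be the main (though mild) obstacle; the unisolvence argument is the one we will use, as it avoids messy determinant bookkeeping.
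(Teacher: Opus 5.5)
Your proposal is correct and follows the paper's route: set up the $4\times 4$ linear system for the weights and solve it. The paper only states the solution, whereas you also check positivity and the quadrant-area identification, and you prove uniqueness (nonsingularity of the system) through $\mathbb{Q}_1$ unisolvence on the rectangle, which the paper leaves implicit.
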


\begin{proof}
	The conditions for the convex coefficient require satisfying the following linear system:
	\begin{align*}
		\alpha_1 + \alpha_2 + \alpha_3 + \alpha_4 &= 1  \\
		\alpha_1 a - \alpha_2 c - \alpha_3 c + \alpha_4 a &= 0, \\
		\alpha_1 b + \alpha_2 b - \alpha_3 d - \alpha_4 d &= 0 , \\
		\alpha_1 ab - \alpha_2 cb + \alpha_3 cd - \alpha_4 ad &= 0 .
	\end{align*}
	
	Solving this linear system provides
	\begin{align}
		\alpha_1  &= \frac{cd}{(a+c)(b+d)} \label{alpha_f}, \\
		\alpha_2  &= \frac{ad}{(a+c)(b+d)}, \\
		\alpha_3  &= \frac{ab}{(a+c)(b+d)}, \\
		\alpha_4  &= \frac{cb}{(a+c)(b+d)}.\label{alpha_l}
	\end{align}
	
\end{proof}

\begin{figure}[tbp]
	\centering
	\includegraphics[width=\panelwidth]{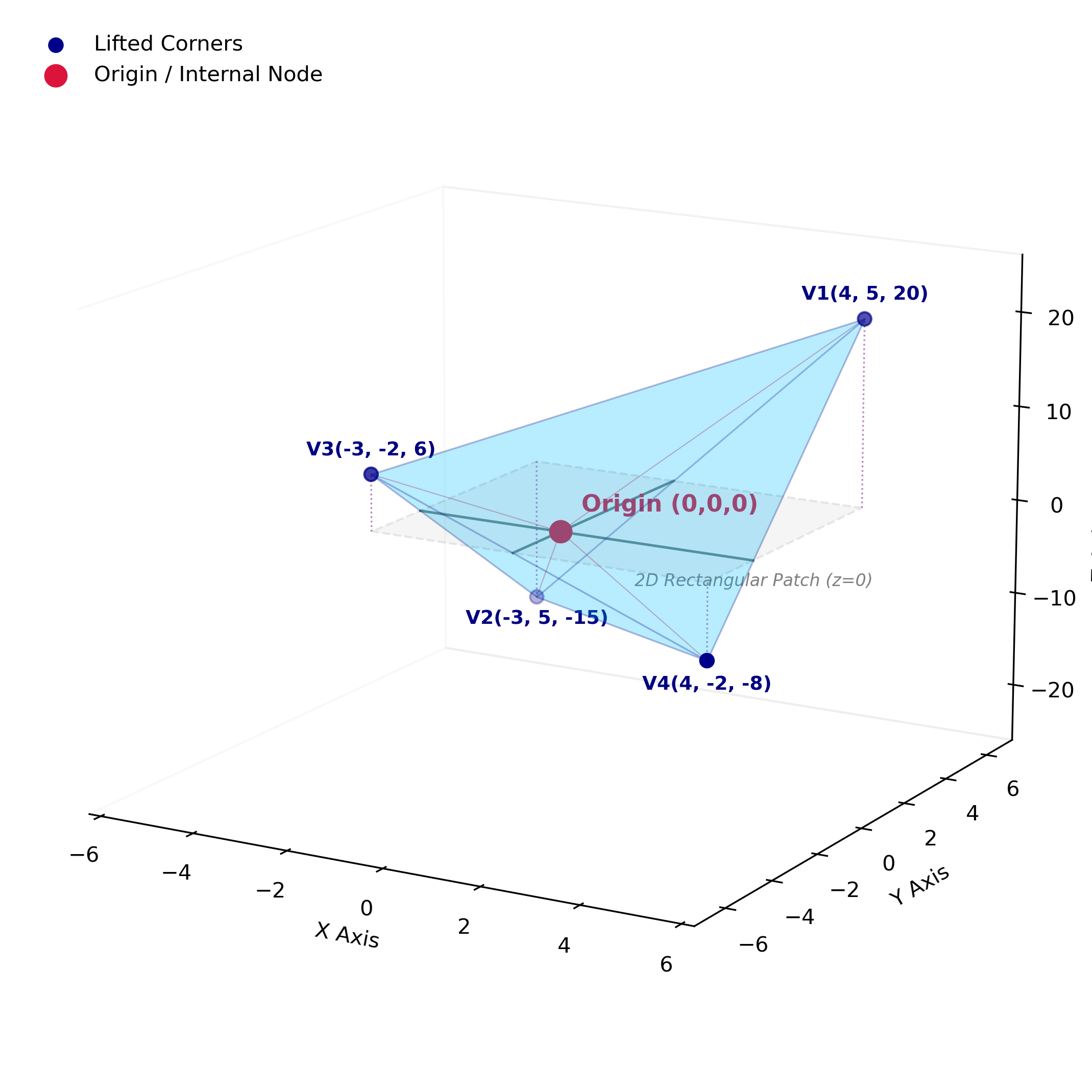}
	\caption{Tetrahedron with lifted corners for specific case of $a=4, b=5,c=3,$ and $d=2$.}
	\label{tetra}
\end{figure}

Geometrically, the above result means that the tetrahedron by the four corner points contains the origin(see Fig.~\ref{tetra}). Observe that the above coefficients are the unique barycentric coefficients.  

Next we define the linear eccentricity of a patch.
\begin{definition}
	We define the linear eccentricity of the patch as $R_x = \frac{\min(a, c)}{a + c}$ and $R_y = \frac{\min(b, d)}{b + d}$.
\end{definition}

Note that the minimum among the weights in ($\ref{alpha_f})-(\ref{alpha_l})$ is $\alpha_{\min} = R_x R_y$.

\begin{theorem}[Geometric Parameter $\mu_i$ for an axes aligned 4-Quad Patch]\label{thm:mu4quad}
	For any bilinear function $p(x,y) \in \mathbb{Q}_1(\mathbb{R}^2) $ evaluated over the 4-Quad patch the following bound holds:
	\begin{equation}\label{eq:geom_param}
		(p_i - p_i^{\min}) \leq \left( \frac{1}{R_x R_y} - 1 \right) (p_i^{\max} - p_i),\quad \mathrm{for}\ \ \ i\in \{1,2,\cdots, m\}.
	\end{equation}
\label{Thm:Explicit_bound}
\end{theorem}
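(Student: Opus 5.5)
The plan is to reduce Theorem~\ref{thm:mu4quad} to Lemma~\ref{general_formula}, using the four corner weights of Lemma~\ref{lem:explicit_weight_2D} as the convex representation of the origin. First I would translate so the central node $i$ sits at the origin. By the translation and scaling invariance already used in Proposition~\ref{Prop:Existence}, it suffices to treat $p(x,y)=bx+cy+dxy$ with $p_i=0$. The constant case is trivial.

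Next I would set up the hypotheses of Lemma~\ref{general_formula}. Extend the weights of Lemma~\ref{lem:explicit_weight_2D} by zero to the four edge-midpoint neighbours $(a,0),(-c,0),(0,b),(0,-d)$. This gives convex weights over $S_i$ with $\sum_j\alpha_j\mathbf P_j=\mathbf 0$, so the patch is bilinearly convex. The set $T=\{j:\alpha_j>0\}$ is exactly $\{V_1,\dots,V_4\}$, since the formulas \eqref{alpha_f}--\eqref{alpha_l} are strictly positive because $a,b,c,d>0$.

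The key hypothesis of Lemma~\ref{general_formula} is that $p_i^{\min}$, the minimum over $S_i$, is attained at a node of $T$. Here I would invoke Lemma~\ref{lem:extrema}: the minimum of $p$ over the whole rectangle $R=[-c,a]\times[-d,b]$ is attained at some corner $V'$. Since every node of $S_i$ lies in $R$, and the corners themselves belong to $S_i$, the discrete minimum over $S_i$ equals $p(V')$ and is therefore attained at a corner in $T$. The same argument applies to the maximum, although only the minimum is needed. Lemma~\ref{general_formula} then gives
\[
p_i-p_i^{\min}\le\Bigl(\frac{1}{\alpha_{\min}}-1\Bigr)\bigl(p_i^{\max}-p_i\bigr).
\]
One point needs care: that lemma divides by $p_i^{\max}$. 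I would cover the case $p_i^{\max}=0$ by noting that the weighted sum $\sum_k\alpha_k p(V_k)=0$ with all corner weights positive forces every $p(V_k)=0$. Then $p_i^{\min}=0$ as well, and the inequality holds trivially.

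Finally, I would compute $\alpha_{\min}$. The four weights are products of one factor from $\{\frac{c}{a+c},\frac{a}{a+c}\}$ and one from $\{\frac{d}{b+d},\frac{b}{b+d}\}$, with every pairing occurring. Hence their minimum is $\frac{\min(a,c)}{a+c}\cdot\frac{\min(b,d)}{b+d}=R_xR_y$, which is exactly the remark after the definition of linear eccentricity. Substituting this yields \eqref{eq:geom_param}.

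I expect the main obstacle to be the bookkeeping in the third paragraph: confirming that the hypothesis ``minimum attained in $T$'' genuinely holds via Lemma~\ref{lem:extrema} even though midpoint nodes carry zero weight, and handling the degenerate denominator. The rest is direct substitution.
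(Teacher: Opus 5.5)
Your proposal is correct and follows the paper's proof essentially step for step. The corner weights of Lemma~\ref{lem:explicit_weight_2D} give $T=\{V_1,\dots,V_4\}$, Lemma~\ref{lem:extrema} places $p_i^{\min}$ on $T$, and Lemma~\ref{general_formula} with $\alpha_{\min}=R_xR_y$ finishes. Your separate treatment of $p_i^{\max}=0$ is extra care the paper omits; it is valid, though you could avoid it by dividing $-\alpha_k p_i^{\min}\le(1-\alpha_k)p_i^{\max}$ by $\alpha_k$ only and using $p_i^{\max}\ge 0$.
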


\begin{proof}
By Lemma~\ref{lem:explicit_weight_2D} the origin is represented by the weights
Eq.~\eqref{alpha_f}--\eqref{alpha_l}, which are supported on the four corners
$V_1,\dots,V_4$ and strictly positive there, so that $T = \{V_1,\dots,V_4\}$ in
the notation of Lemma~\ref{general_formula}. By Lemma~\ref{lem:extrema} the
extrema of $p$ over the patch are attained at corners, and the corners are nodes
of $S_i$; hence $p_i^{\min}$ is attained on $T$ and the hypothesis of
Lemma~\ref{general_formula} is satisfied. The smallest of the four weights is
$\alpha_{\min} = R_x R_y$, and substituting this into Lemma~\ref{general_formula}
gives the assertion.
\end{proof}

We can generalize the above results to higher dimension. The following result describes the 3D-counterpart.

\begin{lemma}
	Let an axis-aligned rectangular parallelepiped domain containing the origin be defined by the 8 vertices $V_k = (x_k, y_k, z_k)$ with $x_k \in \{-c, a\}$, $y_k \in \{-d, b\}$, and $z_k \in \{-f, e\}$ for $a,b,c,d,e,f > 0$. Denote the lifted points in $\mathbb{R}^7$:$$\tilde{V}_k = (x_k, y_k, z_k, x_k y_k, x_k z_k, y_k z_k, x_k y_k z_k)$$The origin $(0,0,0,0,0,0,0)\in \mathbb{R}^7$ can be expressed uniquely as the convex combination $\sum_{k=1}^{8} \alpha_k \tilde{V}_k = \mathbf{0}$ with $\sum_{k=1}^{8} \alpha_k = 1$. The resulting weights $\alpha_k$ are the ratios of the 3D octant volumes diametrically opposite to each respective corner to the volume of the parallelepiped.
\end{lemma}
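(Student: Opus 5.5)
The plan is to exploit the tensor-product structure, exactly as the 2D weights in Lemma~\ref{lem:explicit_weight_2D} factor as $\alpha_1 = \frac{c}{a+c}\cdot\frac{d}{b+d}$, and so on. Define one-dimensional weights
\[
\lambda^x_{a} = \frac{c}{a+c},\quad \lambda^x_{-c} = \frac{a}{a+c},\qquad
\lambda^y_{b} = \frac{d}{b+d},\quad \lambda^y_{-d} = \frac{b}{b+d},\qquad
\lambda^z_{e} = \frac{f}{e+f},\quad \lambda^z_{-f} = \frac{e}{e+f}.
\]
These are the unique convex weights with $\lambda^x_a a + \lambda^x_{-c}(-c) = 0$, and similarly in $y$ and $z$. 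For the corner $V_k=(x_k,y_k,z_k)$ we then set $\alpha_k = \lambda^x_{x_k}\lambda^y_{y_k}\lambda^z_{z_k}$.

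\emph{Existence.} Each $\alpha_k$ is strictly positive, and $\sum_k \alpha_k = (\sum\lambda^x)(\sum\lambda^y)(\sum\lambda^z) = 1$. Every component of $\tilde V_k$ is a product $x_k^{s}y_k^{t}z_k^{r}$ with $(s,t,r)\in\{0,1\}^3\setminus\{(0,0,0)\}$. Summing over the eight corners factorizes:
\[
\sum_k \alpha_k x_k^{s}y_k^{t}z_k^{r} = \Bigl(\sum_{x\in\{-c,a\}}\lambda^x_x x^{s}\Bigr)\Bigl(\sum_{y}\lambda^y_y y^{t}\Bigr)\Bigl(\sum_{z}\lambda^z_z z^{r}\Bigr).
\]
At least one exponent equals $1$, and the corresponding factor vanishes. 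Hence all seven components are zero.

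\emph{Uniqueness.} This is the step needing a genuine argument, and it is where the main obstacle lies. The conditions form a square $8\times 8$ linear system. Its matrix has columns $(1,\tilde V_k)$, with rows indexed by the monomials $1,x,y,z,xy,xz,yz,xyz$ evaluated at the eight corners. This matrix is the Kronecker product of the three $2\times2$ matrices
\[
\begin{pmatrix}1&1\\ a&-c\end{pmatrix},\quad \begin{pmatrix}1&1\\ b&-d\end{pmatrix},\quad \begin{pmatrix}1&1\\ e&-f\end{pmatrix},
\]
up to ordering of rows and columns. Their determinants are $-(a+c)$, $-(b+d)$ and $-(e+f)$, all nonzero, so the Kronecker product is invertible. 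Equivalently, the trilinear interpolation problem on a box is unisolvent. The system therefore has exactly one solution, and this must be the product solution constructed above.

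\emph{Geometric interpretation.} Take the corner $V_1=(a,b,e)$. Its weight is
\[
\alpha_1 = \frac{cdf}{(a+c)(b+d)(e+f)}.
\]
The numerator $cdf$ is the volume of the octant box $[-c,0]\times[-d,0]\times[-f,0]$ diametrically opposite $V_1$, and the denominator is the volume of the whole parallelepiped. The same reading applies to every other corner by symmetry, which gives the stated ratio description.
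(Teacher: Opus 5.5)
Your proof is correct and follows the paper's route: the weights are the products $p_x(x_k)\,p_y(y_k)\,p_z(z_k)$ of the one-dimensional barycentric ratios, which are then read off as ratios of opposite-octant volume to total volume. The paper only asserts that this product is the unique solution of the $8\times 8$ system, whereas you verify all seven moment equations through the factorization and prove uniqueness from the Kronecker structure of the matrix, whose $2\times 2$ factors have the nonzero determinants $-(a+c)$, $-(b+d)$ and $-(e+f)$.
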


\begin{proof}The convex coefficients $\alpha_k$ must satisfy an $8 \times 8$ linear system corresponding to unity and the 7 components of the lifted space,i.e.,

	    \begin{align}
		\sum_{k=1}^{8} \alpha_k &= 1, \label{eq:unity3d} \\
		\sum_{k=1}^{8} \alpha_k x_k = 0, \quad \sum_{k=1}^{8} \alpha_k y_k &= 0, \quad \sum_{k=1}^{8} \alpha_k z_k = 0, \\
		\sum_{k=1}^{8} \alpha_k x_k y_k = 0, \quad \sum_{k=1}^{8} \alpha_k x_k z_k &= 0, \quad \sum_{k=1}^{8} \alpha_k y_k z_k = 0, \\
		\sum_{k=1}^{8} \alpha_k x_k y_k z_k &= 0.
		\end{align}
	The unique solution is a multiplication of ratios from each axis. For any vertex $V_k = (x_k, y_k, z_k)$, define the 1D weight functions:
	$$p_x(x_k) = \begin{cases} \frac{c}{a+c}, &  x_k = a \\ \frac{a}{a+c}, &  x_k = -c \end{cases}, \; p_y(y_k) = \begin{cases} \frac{d}{b+d}, & y_k = b \\ \frac{b}{b+d}, &  y_k = -d \end{cases},\; p_z(z_k) = \begin{cases} \frac{f}{e+f}, & z_k = e \\ \frac{e}{e+f}, &  z_k = -f \end{cases}$$
	
	The weights are given by $\alpha_k = p_x(x_k) p_y(y_k) p_z(z_k)$, which equals:
	$$\alpha_k = \frac{ |\tilde{P_k}| }{(a+c)(b+d)(e+f)}$$
	where $|\tilde{P_k}|$ is the volume of the parallelepiped diametrically opposite to $V_k$.
\end{proof}
\begin{remark}
	Similarly,  Theorem~\ref{Thm:Explicit_bound} can be extended to 3D by multiplying the $z$-axis eccentricity $R_z$ to $R_x R_y$. So the 3D geometric factor for axes aligned meshes will be 
	\begin{equation}
		\mu_i = \left( \frac{1}{R_x R_y R_z} - 1 \right).
		\label{eq:3DFormula}
	\end{equation}
	\label{Rem:3D}
\end{remark}

To demonstrate that the geometric bound established in Theorem~\ref{Thm:Explicit_bound} is optimal, we consider an example where the inequality holds with strict equality.

\begin{example}
	Let a 4-Quad patch be defined on $\Omega = [-1,1]^2$. The grid lines are defined by $x \in \{-1, 0, 1\}$ and $y \in \{-1, 0, 1\}$, positioning the central node $i$ at the origin $(0,0)$ as usual. This configuration yields $a = b = c = d = 1$. Consider the bilinear function $p(x,y) \in \mathbb{Q}_1(\mathbb{R}^2)$: $p(x,y) = x + y - xy$.

Evaluating $p(x,y)$ at the central node:
\begin{equation*}
	p_i = p(0,0) = 0.
\end{equation*}

We compute the values of the function at the remaining 8 nodes across the patch:
\begin{align*}
p(1,1) &=  1, \quad &p(-1,1) &=  1, \\
p(-1,-1) &= -3, \quad &p(1,-1) &=  1, \\
p(1,0) &= 1, \quad &p(-1,0) &= -1, \\
p(0,1) &= 1, \quad &p(0,-1) &= -1.
\end{align*}

The global maximum and minimum values over the 9-node patch thus are :
\begin{equation*}
p_i^{\min} = -3 \quad \text{and} \quad p_i^{\max} = 1.
\end{equation*}

We now evaluate both sides of the inequality in Eq.~\eqref{eq:geom_param}. The left-hand side yields:
\begin{equation*}
 p_i - p_i^{\min} = 0 - (-3) = 3.
\end{equation*}

For the right-hand side, we compute the eccentricity ratios and the geometric parameter:
\begin{equation*}
R_x = \frac{\min(1,1)}{1+1} = 0.5, \quad R_y = \frac{\min(1,1)}{1+1} = 0.5.
\end{equation*}

\begin{equation*}
\left( \frac{1}{R_x R_y} - 1 \right) = \left( \frac{1}{0.5 \times 0.5} - 1 \right) = 4 - 1 = 3.
\end{equation*}

Using this multiplier  the RHS of Eq.~\eqref{eq:geom_param} evaluates to:
\begin{equation}
	\left( \frac{1}{R_x R_y} - 1 \right) (p_i^{\max} - p_i) = 3 \times (1 - 0) = 3.
\end{equation}
This shows the optimality of the geometric parameter.

\end{example}

\section{Numerical Results}
\label{sec:numerics}

\subsection{Discretization, Solver and Implementation}
\label{subsec:setup}

The AFC scheme \eqref{eq:AFC_Sys} is a nonlinear algebraic system, and its numerical
solution is a nontrivial matter in its own right; a systematic comparison of iterative
schemes for exactly this class of problems was carried out by Jha and John
\cite{JJ19}, who found that the simple fixed point iterations outperform Newton-type
methods, because the iteration matrix retains properties that the linear solver can
exploit. Following their terminology, we use the \emph{fixed point right-hand side}
iteration method, combined with dynamic damping. The nonlinear loop was terminated when the residual of the system of equation dropped below $10^{-10}$ or when $10^6$ iterations were reached. The nonlinear loop converged in all the simulations reported below, with the exception of the largest values of $\mu_i$ considered in the sensitivity study of Section~\ref{subsec:musens}; these cases are identified explicitly there. We present results with respect to three limiters, BJK from \cite{BJK17}, Kuzmin from \cite{BJK16}, and the BJK ($\mathbb{Q}_1$) developed in this paper.

All computations were performed on a machine with an Intel Core i9-14900KS processor
(32 threads) and $128$ GB of RAM, running Linux.  The simulations were carried on the Julia code \texttt{Ferrite.jl} \cite{Ferrite_jl}.

\subsubsection{Grids}
\label{subsec:grids}

Three families of quadrilateral grids on $[0,1]^2$ are used, shown in
Fig.~\ref{fig:meshes}. Unless stated otherwise, the initial grid consists $30 \times 30$ cells in the
experiments.

\begin{itemize}[leftmargin=*]
	\item \emph{Uniform grids.} The tensor-product is created grid with nodes
	$x_k = y_k = k/N$, $k = 0,\dots,N$. Every interior patch is a $4$-Quad
	patch with $a = b = c = d = h$, so that $R_x = R_y = 1/2$ and
	Theorem~\ref{Thm:Explicit_bound} gives $\mu_i = 3$ at every interior node.

	\item \emph{Non-uniform axis-aligned grids.} The grid lines remain parallel to the
	coordinate axes but are graded, the coordinates being
	$x_k = y_k = (k/n)^{p}$, $k = 0,\dots,n$, with $p = 3/2$ unless stated otherwise.
	The resulting patches are axis-aligned but asymmetric, so that
	$R_x, R_y < 1/2$ and $\mu_i$ varies from node to node. This family is included
	precisely because it separates axis-alignment, which is what
	Theorem~\ref{thm:exactrep} requires, from uniformity, which it does not.

	\item \emph{Randomly perturbed grids.} Starting from the uniform grid, every
	interior vertex is displaced by an independent random vector drawn uniformly from
	$[-0.2h, 0.2h]^2$, where $h$ is the uniform mesh width; boundary vertices are kept
	fixed so that the domain is reproduced exactly. The cells are then general
	quadrilaterals, mapped from the reference square by bilinear (isoparametric) maps,
	and by Theorem~\ref{thm:exactrep} and Corollary~\ref{Cor:Isoparametric} a bilinear
	function is no longer contained in $V_h$. The perturbation is small enough that all
	cells remain convex and the grid stays admissible.
\end{itemize}
For three dimensional problems the grids are defined in the same manner as two dimensional grids. For brevity we omit the details.
\begin{figure}[tbp]
	\centering
	\includegraphics[width=\textwidth]{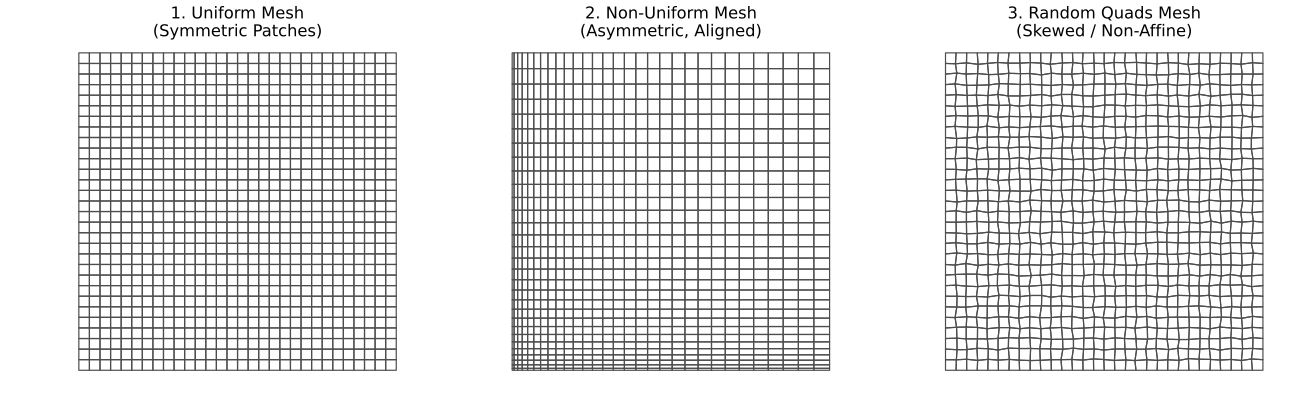}
	\caption{Grids used in the bilinearity-preservation test of
		Section~\ref{subsec:bilinear}: uniform (left), non-uniform axis-aligned
		(centre), and randomly perturbed by $20\%$ of the mesh width (right).}
	\label{fig:meshes}
\end{figure}

\subsection{Problems with Multilinear Solutions}
\label{subsec:bilinear}
Here we verify the multilinearity preservation property of the proposed limiter by constructing problems with bilinear and trilinear solutions.
\subsubsection{Problem with Bilinear Solution}
This example is chosen so as to isolate the property of bilinearity preservation. We
take $\varepsilon = 10^{-7}$ and $\boldsymbol{b}(x,y) = (y, x)^{\top}$ on
$\Omega = (0,1)^2$, with $\sigma = 0$. Note that
$\nabla \cdot \boldsymbol{b} = 0$, as required in Section~\ref{sec:model}. The exact
solution is prescribed as
\begin{equation}
	u(x,y) = 10\,xy,
	\label{eq:exact_bilinear}
\end{equation}
the Dirichlet datum $u_{\mathrm{D}}$ is its restriction to $\partial\Omega$, and the
right-hand side $f$ of Eq.~\eqref{MainEq} is computed accordingly.

Fig.~\ref{fig:10xy_uniform}--\ref{fig:10xy_random} compare the absolute error of
the BJK limiter (left) with that of the BJK($ \mathbb{Q}_1 $) limiter (right) on the three grids of
Fig.~\ref{fig:meshes}, in the order in which they appear there.

\begin{figure}[tbp]
	\centering
	\includegraphics[width=\textwidth]{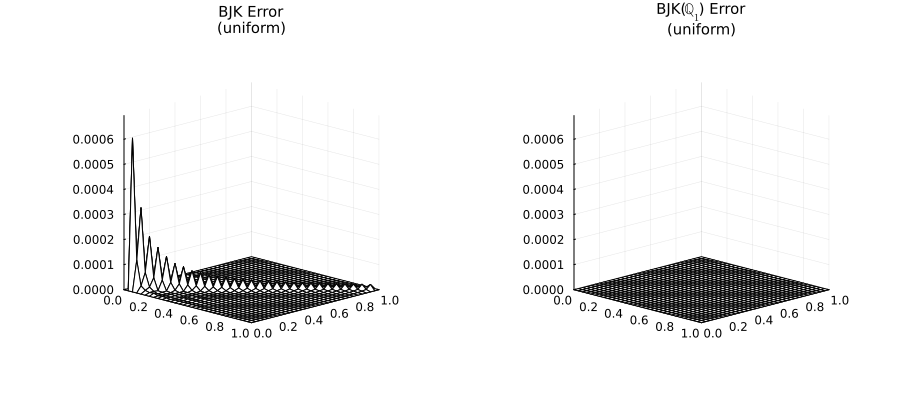}
	\caption{Example~\ref{subsec:bilinear}: Absolute error for the solution $u = 10xy$ on the uniform grid: BJK
		limiter (left), BJK($ \mathbb{Q}_1 $) limiter (right).}
	\label{fig:10xy_uniform}
\end{figure}

\begin{figure}[tbp]
	\centering
	\includegraphics[width=\textwidth]{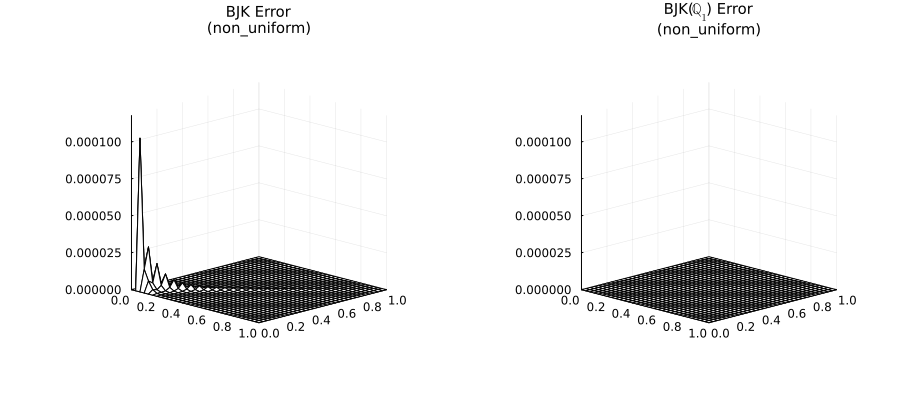}
	\caption{Example~\ref{subsec:bilinear}: Absolute error for the solution $u = 10xy$ on the non-uniform axis-aligned grid: BJK limiter (left), BJK($ \mathbb{Q}_1 $) limiter (right).}
	\label{fig:10xy_nonuniform}
\end{figure}

\begin{figure}[tbp]
	\centering
	\includegraphics[width=\textwidth]{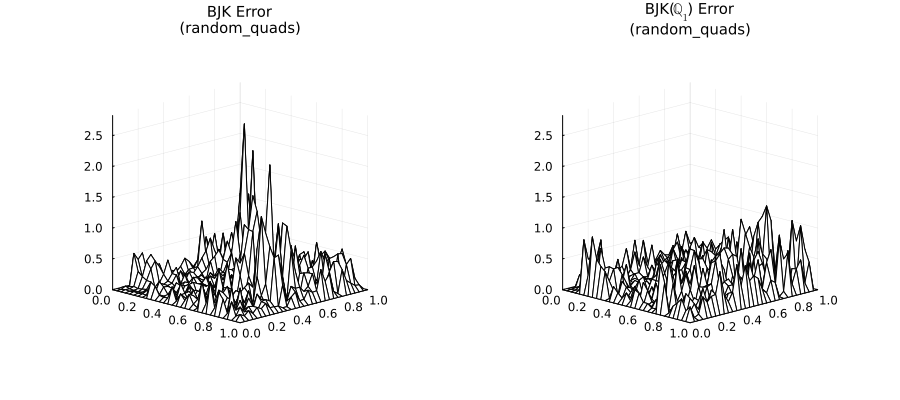}
	\caption{Example~\ref{subsec:bilinear}: Absolute error for the solution $u = 10xy$ on the randomly perturbed grid: BJK limiter (left), BJK($ \mathbb{Q}_1 $) limiter (right).}
	\label{fig:10xy_random}
\end{figure}

\subsubsection{Problem with Trilinear Solution}
This example demonstrates the extension of the proposed limiter to multi-linearity preservation. We have taken $\varepsilon = 10^{-7}$ and $\boldsymbol{b}(x,y,z) = (yz,xz,xy)^{\top}$ on
$\Omega = (0,1)^3$, with $\sigma = 0$. 

The exact solution is 
\begin{equation}
	u(x,y,z) = 10\,xyz.
	\label{eq:exact_trilinear}
\end{equation}
The Dirichlet boundary condition is the restriction of $10xyz$ to the boundary cube and $f$ is set by substituting the exact solution into the left hand side of \eqref{MainEq}. Fig.~\ref{fig:10xyz_uniform}--\ref{fig:10xyz_random} depict the absolute error function in the cross section $z=0.5$.

\begin{figure}[tbp]
	\centering
	\includegraphics[width=\textwidth]{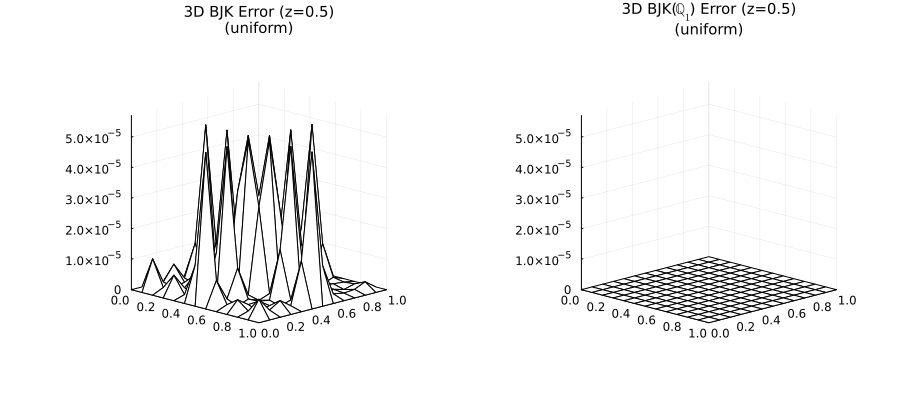}
	\caption{Example~\ref{subsec:bilinear}: Absolute error for the solution $u = 10xyz$ on the uniform grid for $z=0.5$: BJK
		limiter (left), BJK($ \mathbb{Q}_1 $) limiter (right).}
	\label{fig:10xyz_uniform}
\end{figure}

\begin{figure}[tbp]
	\centering
	\includegraphics[width=\textwidth]{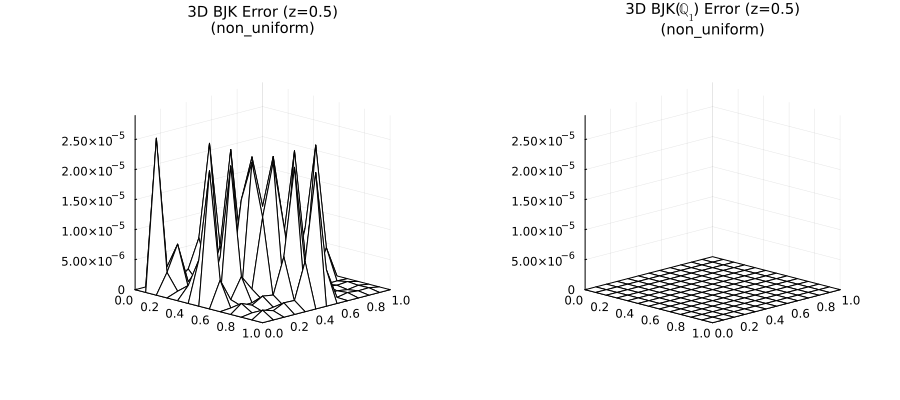}
	\caption{Example~\ref{subsec:bilinear}: Absolute error for the solution $u = 10xyz$ on the non-uniform axis-aligned grid for $z=0.5$: BJK limiter (left), BJK($ \mathbb{Q}_1 $) limiter (right).}
	\label{fig:10xyz_nonuniform}
\end{figure}

\begin{figure}[tbp]
	\centering
	\includegraphics[width=\textwidth]{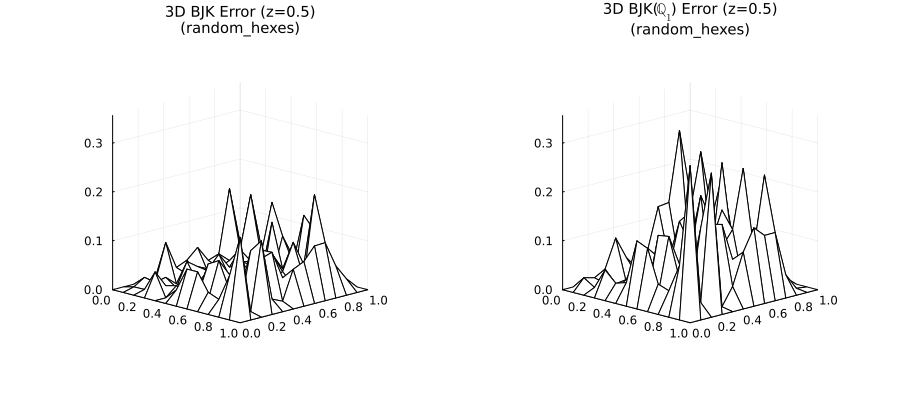}
	\caption{Example~\ref{subsec:bilinear}: Absolute error for the solution $u = 10xyz$ on the randomly perturbed grid for $z=0.5$: BJK limiter (left), BJK($ \mathbb{Q}_1 $) limiter (right).}
	\label{fig:10xyz_random}
\end{figure}
In both of these cases, on the two axis-aligned grids the error of the BJK($\mathbb{Q}_1 $) limiter is zero up to the accuracy
of the nonlinear iteration whereas the BJK limiter produces an error of the size of the discretization error. This is exactly what the theory predicts: by
Theorem~\ref{thm:exactrep} the bilinear function Eq.~\eqref{eq:exact_bilinear} and the trilinear function Eq.~\eqref{eq:exact_trilinear} belongs to
$V_h$ on an axis-aligned grid, by Theorem~\ref{Thm:Explicit_bound} the BJK($ \mathbb{Q}_1 $) limiter is inactive on it, and hence by Corollary~\ref{Cor:GalerkinExact} the discrete solution
coincides with $u$. The BJK limiter, being only linearity preserving, remains active
and adds artificial diffusion where none is needed.

On the randomly perturbed grid neither limiter reproduces the solution, and the two
errors are of comparable magnitude. This is not a failure of the BJK($ \mathbb{Q}_1 $) limiter but of the
finite element space: by Corollary~\ref{Cor:Isoparametric} a bilinear function is not
contained in $V_h$ on non-axis-aligned quadrilaterals, so no choice of limiter can
recover it.

\subsection{A Three-Dimensional Problem with Polynomial Solution}
\label{subsec:3D}

We now consider $\Omega = (0,1)^3$ with $\varepsilon = 10^{-7}$,
$\boldsymbol{b} = (1.0, 0.2, 0.1)^{\top}$ and $\sigma = 0$, and the exact solution
\begin{equation}
	u(x,y,z) = xyz\,(1-x)(1-y)(1-z),
	\label{eq:exact_3d}
\end{equation}
which vanishes on $\partial\Omega$, so that $u_{\mathrm{D}} = 0$; the right-hand side
$f$ is computed from Eq.~\eqref{eq:exact_3d}. 
The grids are uniform and axis-aligned (similar to Grid~1 in Fig.~\ref{fig:meshes}).

As described in Remark~\ref{Rem:3D}, the construction extends to the trilinear case
with no essential change. For a uniform axis-aligned hexahedral grid all three
eccentricities equal $1/2$, so Eq.~\eqref{eq:3DFormula} yields $\mu_i = 7$ at every
interior node. Fig.~\ref{fig:3d_error} shows the absolute error on the cut planes
$z = 0.5$ and $y = 0.5$ for the BJK limiter (left) and the BJK($ \mathbb{Q}_1 $) limiter (right). The BJK($ \mathbb{Q}_1 $) error is smaller throughout. 

\begin{figure}[tbp]
	\centering
	\includegraphics[width=\textwidth]{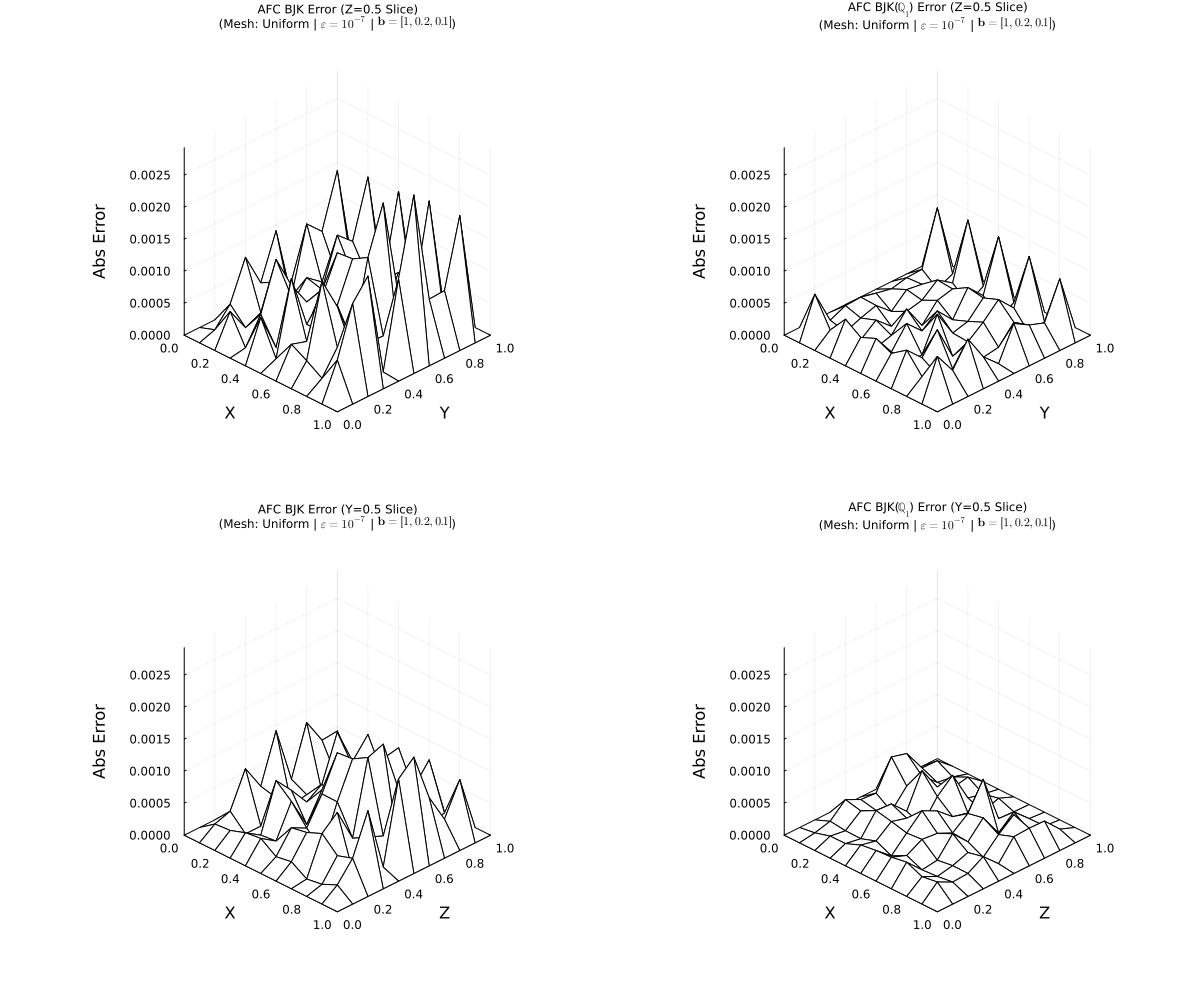}
	\caption{Example~\ref{subsec:3D}: Absolute error of the BJK and BJK($ \mathbb{Q}_1 $) limiters for the exact solution \eqref{eq:exact_3d} on the cut planes $z = 0.5$ (top) and $y = 0.5$ (bottom).}
	\label{fig:3d_error}
\end{figure}

Table~\ref{tab:3d_convergence} reports errors and empirical orders of convergence
(EOC) for the BJK and the BJK($ \mathbb{Q}_1 $) limiter on uniform grids ranging from $10\times10\times10$ to $60\times60\times60$ cells. The
error in the $L^2$ norm converges with order $2$ and the error in the $H^1$ norm with
order $1$, i.e.\ with the optimal rates for $\mathbb{Q}_1$ elements for both of them. In all cases, the BJK($ \mathbb{Q}_1 $) error is less than that of BJK.

\begin{table}[htbp]
	\centering
	\footnotesize
	\setlength{\tabcolsep}{3pt}
	\begin{tabular}{|c|c|c|c|c|c|c|c|c|c|}
		\hline
		Mesh & $h$ & BJK($ \mathbb{Q}_1 $) ($L^2$) & EOC & BJK ($L^2$) & EOC & BJK($ \mathbb{Q}_1 $) ($H^1$) & EOC & BJK ($H^1$) & EOC \\
		\hline
		$10\times10\times10$ & 0.1000 & 1.0853e-04 & -    & 2.1138e-04 & -    & 4.2537e-03 & -    & 5.5149e-03 & -    \\ \hline
		$15\times15\times15$ & 0.0667 & 3.8941e-05 & 2.53 & 5.7814e-05 & 3.20 & 2.4362e-03 & 1.37 & 2.7969e-03 & 1.67 \\ \hline
		$20\times20\times20$ & 0.0500 & 1.7930e-05 & 2.70 & 2.2964e-05 & 3.21 & 1.7332e-03 & 1.18 & 1.8181e-03 & 1.50 \\ \hline
		$25\times25\times25$ & 0.0400 & 9.9291e-06 & 2.65 & 1.2622e-05 & 2.68 & 1.3567e-03 & 1.10 & 1.3915e-03 & 1.20 \\ \hline
		$30\times30\times30$ & 0.0333 & 6.7117e-06 & 2.15 & 7.9193e-06 & 2.56 & 1.1213e-03 & 1.05 & 1.1350e-03 & 1.12 \\ \hline
		$35\times35\times35$ & 0.0286 & 4.9306e-06 & 2.00 & 5.1664e-06 & 2.77 & 9.5846e-04 & 1.02 & 9.6443e-04 & 1.06 \\ \hline
		$40\times40\times40$ & 0.0250 & 3.7156e-06 & 2.12 & 3.8764e-06 & 2.15 & 8.3634e-04 & 1.02 & 8.3980e-04 & 1.04 \\ \hline
		$45\times45\times45$ & 0.0222 & 2.9274e-06 & 2.02 & 3.0064e-06 & 2.16 & 7.4308e-04 & 1.00 & 7.4465e-04 & 1.02 \\ \hline
		$50\times50\times50$ & 0.0200 & 2.3615e-06 & 2.04 & 2.4013e-06 & 2.13 & 6.6794e-04 & 1.01 & 6.6915e-04 & 1.01 \\ \hline
		$55\times55\times55$ & 0.0182 & 1.9523e-06 & 2.00 & 1.9711e-06 & 2.07 & 6.0718e-04 & 1.00 & 6.0771e-04 & 1.01 \\ \hline
		$60\times60\times60$ & 0.0167 & 1.6363e-06 & 2.03 & 1.6473e-06 & 2.06 & 5.5622e-04 & 1.01 & 5.5673e-04 & 1.01 \\ \hline
	\end{tabular}
	\caption{Example~\ref{subsec:3D}: Errors and empirical orders of convergence for the
		BJK and BJK($\mathbb{Q}_1$) limiters.}
	\label{tab:3d_convergence}
\end{table}

\subsection{A Problem with an Interior Layer}\label{subsec:layer}

The following example goes back to Hughes et al.\ \cite{HMM86}. It is posed on
$\Omega = (0,1)^2$ with
$\boldsymbol{b} = (\cos(\pi/4), \sin(\pi/4))^{\top}$, $\sigma = 0$, $f = 0$,
$\varepsilon = 10^{-6}$, and the Dirichlet boundary condition
\begin{equation}
	u_{\mathrm{D}} =
	\begin{cases}
		1 & \text{if } (y = 1 \wedge x > 0) \text{ or } (x = 0 \wedge y > 0), \\
		0 & \text{otherwise.}
	\end{cases}
	\label{eq:layer_bc}
\end{equation}
The boundary datum is discontinuous at the origin, and the solution possesses a sharp
interior layer emanating from it along the direction of $\boldsymbol{b}$, together
with exponential layers at the outflow boundary. Fig.~\ref{fig:layer_profile} shows
the profile computed with the BJK($ \mathbb{Q}_1 $) limiter.

\begin{figure}[tbp]
	\centering
	\includegraphics[width=0.7\textwidth]{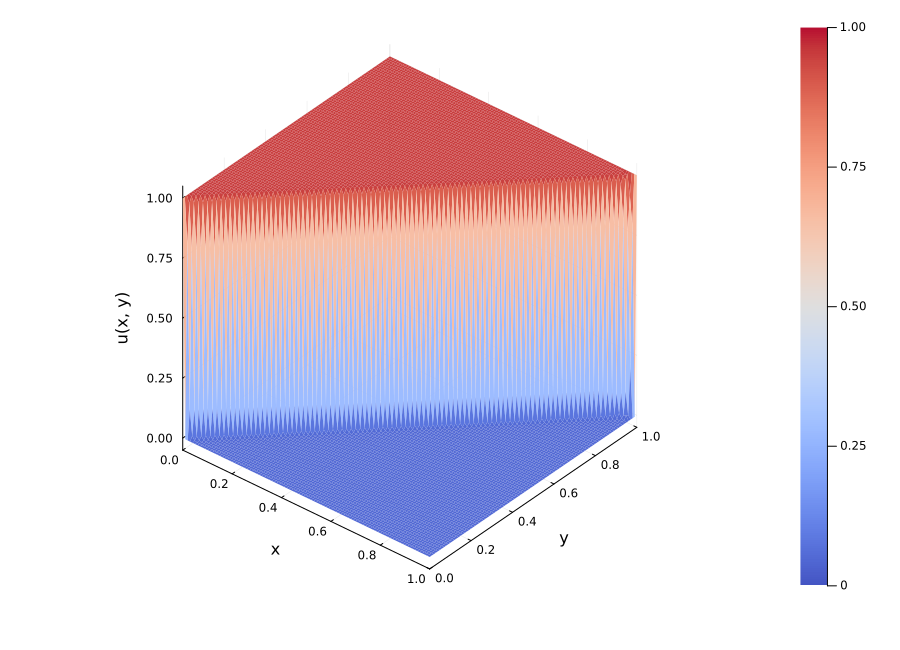}
	\caption{Example~\ref{subsec:layer}: Solution profile computed with the BJK($ \mathbb{Q}_1 $) limiter on a $100 \times 100$ grid.}
	\label{fig:layer_profile}
\end{figure}

The exact solution is not known, so the quality of a discretization is assessed by how
sharply it resolves the interior layer. First we will work on uniform grids. 

\subsubsection{Uniform Grids}

 Fig.~\ref{fig:cutline} (left) shows the cutline at
$y = 0.5$ for five schemes on a $65 \times 65$ grid, and Fig.~\ref{fig:cutline} (right)
magnifies the layer region. The Galerkin solution oscillates, as expected. The
remaining four curves are, in order of decreasing smearing, the low-order scheme
(obtained by setting $\alpha_{ij} \equiv 0$), the limiter of Kuzmin
\cite{Ku09}, the BJK limiter, and the BJK($ \mathbb{Q}_1 $) limiter.

\begin{figure}[tbp]
\centering
\pgfplotslegendfromname{sharedlegend}\\[1.5ex]

\begin{minipage}[t]{0.48\textwidth}
	\centering
	\begin{tikzpicture}
		\begin{axis}[
			xlabel={$x$},
			ylabel={$u(x,0.5)$},
			xmin=-0.02, xmax=1.02,
			ymin=-0.15, ymax=1.15,
			plotpair,
			grid=major,
			grid style={dashed, gray!30},
			legend to name=sharedlegend,
			legend columns=-1,
			legend style={
				draw=none,
				fill=none,
				font=\small,
				/tikz/every even column/.append style={column sep=10pt}
			},
			tick label style={font=\scriptsize},
			label style={font=\small},
			set layers,
			mark layer=axis foreground
			]
			\addplot[color=red, dashed, thick] 
			table[x=x, y=Galerkin] {\cutlinedata};
			\addlegendentry{Galerkin}
			
			\addplot[
			color=blue, 
			solid, 
			thick, 
			mark=*, 
			mark size=1.2pt, 
			mark repeat=8, 
			mark phase=1,
			mark options={solid}
			] table[x=x, y=LowOrder] {\cutlinedata};
			\addlegendentry{Low-order}
			
			\addplot[
			color=green!60!black, 
			dashdotted, 
			thick, 
			mark=square*, 
			mark size=1.2pt, 
			mark repeat=8, 
			mark phase=3,
			mark options={solid}
			] table[x=x, y=Kuzmin] {\cutlinedata};
			\addlegendentry{Kuzmin}
			
			\addplot[
			color=orange, 
			densely dotted, 
			thick, 
			mark=triangle*, 
			mark size=1.2pt, 
			mark repeat=8, 
			mark phase=5,
			mark options={solid}
			] table[x=x, y=BJK]{\cutlinedata};
			\addlegendentry{BJK}
			
			\addplot[
			color=purple, 
			solid, 
			thick, 
			mark=diamond*, 
			mark size=1.2pt, 
			mark repeat=8, 
			mark phase=7,
			mark options={solid}
			] table[x=x, y=BJK()]{\cutlinedata};
			\addlegendentry{BJK($ \mathbb{Q}_1 $)}
		\end{axis}
	\end{tikzpicture}
\end{minipage}%
\hfill
\begin{minipage}[t]{0.48\textwidth}
	\centering
	\begin{tikzpicture}
		\begin{axis}[
			xlabel={$x$},
			ylabel={$u(x,0.5)$},
			xmin=0.40, xmax=0.60,
			ymin=-0.05, ymax=1.05,
			plotpair,
			grid=major,
			grid style={dashed, gray!30},
			tick label style={font=\scriptsize},
			label style={font=\small},
			set layers,
			mark layer=axis foreground
			]
			\addplot[
			color=blue, 
			solid, 
			thick, 
			mark=*, 
			mark size=1.2pt, 
			mark repeat=3, 
			mark phase=1,
			mark options={solid}
			] table[x=x, y=LowOrder]{\cutlinezoomeddata};
			
			\addplot[
			color=green!60!black, 
			dashdotted, 
			thick, 
			mark=square*, 
			mark size=1.2pt, 
			mark repeat=3, 
			mark phase=2,
			mark options={solid}
			] table[x=x, y=Kuzmin]{\cutlinezoomeddata};
			
			\addplot[
			color=orange, 
			densely dotted, 
			thick, 
			mark=triangle*, 
			mark size=1.2pt, 
			mark repeat=3, 
			mark phase=3,
			mark options={solid}
			] table[x=x, y=BJK] {\cutlinezoomeddata};
			
			\addplot[
			color=purple, 
			solid, 
			thick, 
			mark=diamond*, 
			mark size=1.2pt, 
			mark repeat=3, 
			mark phase=4,
			mark options={solid}
			] table[x=x, y=BJK()]{\cutlinezoomeddata};
		\end{axis}
	\end{tikzpicture}
\end{minipage}

	\caption{Example~\ref{subsec:layer}: Cutline at $y=0.5$ for the different schemes on a
	$65\times65$ grid (left), with a magnified view of the interior layer (right).
	The Galerkin solution is omitted from the magnified view for clarity.}
	\label{fig:cutline}
\end{figure}

Following \cite{JJK24}, we quantify the smearing of the layer by the width of the
transition region on the cutline,
\begin{equation}
	\mathrm{Smear} = x_2 - x_1,
	\label{eq:smear}
\end{equation}
where $x_1$ is the smallest $x$ at which the computed solution on the cutline
$y = 0.5$ falls below $0.9$ and $x_2$ is the smallest $x$ at which it falls below
$0.1$. Fig.~\ref{fig:smear} is a $\log$--$\log$ plot of $\mathrm{Smear}$ against the
number of degrees of freedom, for grids from $10 \times 10$ to $100 \times 100$.

The four schemes reduce the smearing in the order low-order, Kuzmin, BJK, BJK($ \mathbb{Q}_1 $), with the
BJK($ \mathbb{Q}_1 $) limiter giving the smallest transition width on every grid tested.

\begin{figure}[tbp]
\centering
\pgfplotslegendfromname{smearing_legend}\\[1.5ex]
\begin{tikzpicture}
	\begin{loglogaxis}[
		xlabel={Degrees of freedom ($n$)},
		ylabel={$\mathrm{Smear}$},
		plotsingle,
		grid=major,
		grid style={dashed, gray!30},
		legend to name=smearing_legend,
		legend columns=-1,
		legend style={
			draw=none,
			fill=none,
			font=\small,
			/tikz/every even column/.append style={column sep=12pt}
		},
		tick label style={font=\scriptsize},
		label style={font=\small},
		log basis x={10},
		log basis y={10},			
		set layers,
		mark layer=axis foreground
		]
		\addplot[
		color=blue, 
		solid, 
		thick, 
		mark=*, 
		mark size=1.5pt, 
		mark repeat=2, 
		mark phase=1,
		mark options={solid}
		] table[x=DOFs, y=LowOrder] {\convergencedata};
		\addlegendentry{Low-order}
		
		\addplot[
		color=green!60!black, 
		dashdotted, 
		thick, 
		mark=square*, 
		mark size=1.5pt, 
		mark repeat=2, 
		mark phase=1,
		mark options={solid}
		] table[x=DOFs, y=Kuzmin] {\convergencedata};
		\addlegendentry{Kuzmin}
		
		\addplot[
		color=orange, 
		densely dotted, 
		thick, 
		mark=triangle*, 
		mark size=1.5pt, 
		mark repeat=2, 
		mark phase=2,
		mark options={solid}
		] table[x=DOFs, y=BJK] {\convergencedata};
		\addlegendentry{BJK}
		
		\addplot[
		color=purple, 
		solid, 
		thick, 
		mark=diamond*, 
		mark size=1.5pt, 
		mark repeat=2, 
		mark phase=2,
		mark options={solid}
		] table[x=DOFs, y=BJK()] {\convergencedata};
		\addlegendentry{BJK($ \mathbb{Q}_1 $)}
	\end{loglogaxis}
\end{tikzpicture}
	\caption{Example~\ref{subsec:layer}: Width of the smeared interior layer, measured by Eq.~\eqref{eq:smear},
		against the number of degrees of freedom.}
	\label{fig:smear}
\end{figure}

\subsubsection{Non-Uniform Grids}
We now solve the same problem on the non-uniform axis-aligned grids. Fig.~\ref{fig:cutlinenu} shows the cutline at
$y = 0.5$ on a $65 \times 65$ graded grid, and Fig.~\ref{fig:smearnu} the
corresponding smearing against the number of degrees of freedom.

The ordering of the four schemes is the same as on the uniform grid, and the absolute smearing is smaller for all of them, the grading placing more mesh points near the layer. The two limiters BJK and BJK ($\mathbb{Q}_1$) are, however, much closer to each other here than might be expected, and the reason is the one recorded there: on a grid graded by $x_k = (k/n)^p$ the
parameter $\mu_i$ equals its uniform value $3$ at all but $O(n)$ nodes, and the
few nodes carrying the larger values $2^{p+1}-1$ and $2^{2p}-1$ lie along the
graded boundaries, away from the interior layer. A grading of this type therefore
changes the amount of limiting only in a region where the layer is absent. This
observation is pursued in Section~\ref{subsec:musens}.

\begin{figure}[tbp]
	\centering
	\pgfplotslegendfromname{sharedlegend_nonuniform}\\[1.5ex]
	
	\begin{minipage}[t]{0.48\textwidth}
		\centering
		\begin{tikzpicture}
			\begin{axis}[
				xlabel={$x$},
				ylabel={$u(x,0.5)$},
				xmin=-0.02, xmax=1.02,
				ymin=-0.15, ymax=1.15,
				plotpair,
				grid=major,
				grid style={dashed, gray!30},
				legend to name=sharedlegend_nonuniform,
				legend columns=-1,
				legend style={
					draw=none,
					fill=none,
					font=\small,
					/tikz/every even column/.append style={column sep=10pt}
				},
				tick label style={font=\scriptsize},
				label style={font=\small},
				set layers,
				mark layer=axis foreground
				]
				\addplot[color=red, dashed, thick] 
				table[x=x, y=Galerkin] {\cutlinedatanu};
				\addlegendentry{Galerkin}
				
				\addplot[
				color=blue, 
				solid, 
				thick, 
				mark=*, 
				mark size=1.2pt, 
				mark repeat=8, 
				mark phase=1,
				mark options={solid}
				] table[x=x, y=LowOrder] {\cutlinedatanu};
				\addlegendentry{Low-order}
				
				\addplot[
				color=green!60!black, 
				dashdotted, 
				thick, 
				mark=square*, 
				mark size=1.2pt, 
				mark repeat=8, 
				mark phase=3,
				mark options={solid}
				] table[x=x, y=Kuzmin] {\cutlinedatanu};
				\addlegendentry{Kuzmin}
				
				\addplot[
				color=orange, 
				densely dotted, 
				thick, 
				mark=triangle*, 
				mark size=1.2pt, 
				mark repeat=8, 
				mark phase=5,
				mark options={solid}
				] table[x=x, y=BJK]{\cutlinedatanu};
				\addlegendentry{BJK}
				
				\addplot[
				color=purple, 
				solid, 
				thick, 
				mark=diamond*, 
				mark size=1.2pt, 
				mark repeat=8, 
				mark phase=7,
				mark options={solid}
				] table[x=x, y=BJK()]{\cutlinedatanu};
				\addlegendentry{BJK($ \mathbb{Q}_1 $)}
			\end{axis}
		\end{tikzpicture}
	\end{minipage}%
	\hfill
	\begin{minipage}[t]{0.48\textwidth}
		\centering
		\begin{tikzpicture}
			\begin{axis}[
				xlabel={$x$},
				ylabel={$u(x,0.5)$},
				xmin=0.40, xmax=0.60,
				ymin=-0.05, ymax=1.05,
				plotpair,
				grid=major,
				grid style={dashed, gray!30},
				tick label style={font=\scriptsize},
				label style={font=\small},
				set layers,
				mark layer=axis foreground
				]
				\addplot[
				color=blue, 
				solid, 
				thick, 
				mark=*, 
				mark size=1.2pt, 
				mark repeat=3, 
				mark phase=1,
				mark options={solid}
				] table[x=x, y=LowOrder]{\cutlinezoomeddatanu};
				
				\addplot[
				color=green!60!black, 
				dashdotted, 
				thick, 
				mark=square*, 
				mark size=1.2pt, 
				mark repeat=3, 
				mark phase=2,
				mark options={solid}
				] table[x=x, y=Kuzmin]{\cutlinezoomeddatanu};
				
				\addplot[
				color=orange, 
				densely dotted, 
				thick, 
				mark=triangle*, 
				mark size=1.2pt, 
				mark repeat=3, 
				mark phase=3,
				mark options={solid}
				] table[x=x, y=BJK] {\cutlinezoomeddatanu};
				
				\addplot[
				color=purple, 
				solid, 
				thick, 
				mark=diamond*, 
				mark size=1.2pt, 
				mark repeat=3, 
				mark phase=4,
				mark options={solid}
				] table[x=x, y=BJK()]{\cutlinezoomeddatanu};
			\end{axis}
		\end{tikzpicture}
	\end{minipage}
	
	\caption{Example~\ref{subsec:layer}: Cutline at $y=0.5$ for the different schemes on a
		$65\times65$ non-uniform axis-aligned grid (left), with a magnified view of the
		interior layer (right). The Galerkin solution is omitted from the magnified
		view for clarity.}
	\label{fig:cutlinenu}
\end{figure}
\begin{figure}[tbp]
	\centering
	\pgfplotslegendfromname{smearing_legend_uniform}\\[1.5ex]
	\begin{tikzpicture}
		\begin{loglogaxis}[
			xlabel={Degrees of freedom ($n$)},
			ylabel={$\mathrm{Smear}$},
			plotsingle,
			grid=major,
			grid style={dashed, gray!30},
			legend to name=smearing_legend_uniform,
			legend columns=-1,
			legend style={
				draw=none,
				fill=none,
				font=\small,
				/tikz/every even column/.append style={column sep=12pt}
			},
			tick label style={font=\scriptsize},
			label style={font=\small},
			log basis x={10},
			log basis y={10},			
			set layers,
			mark layer=axis foreground
			]
			\addplot[
			color=blue, 
			solid, 
			thick, 
			mark=*, 
			mark size=1.5pt, 
			mark repeat=2, 
			mark phase=1,
			mark options={solid}
			] table[x=DOFs, y=LowOrder] {\convergencedatanu};
			\addlegendentry{Low-order}
			
			\addplot[
			color=green!60!black, 
			dashdotted, 
			thick, 
			mark=square*, 
			mark size=1.5pt, 
			mark repeat=2, 
			mark phase=1,
			mark options={solid}
			] table[x=DOFs, y=Kuzmin] {\convergencedatanu};
			\addlegendentry{Kuzmin}
			
			\addplot[
			color=orange, 
			densely dotted, 
			thick, 
			mark=triangle*, 
			mark size=1.5pt, 
			mark repeat=2, 
			mark phase=2,
			mark options={solid}
			] table[x=DOFs, y=BJK] {\convergencedatanu};
			\addlegendentry{BJK}
			
			\addplot[
			color=purple, 
			solid, 
			thick, 
			mark=diamond*, 
			mark size=1.5pt, 
			mark repeat=2, 
			mark phase=2,
			mark options={solid}
			] table[x=DOFs, y=BJK()] {\convergencedatanu};
			\addlegendentry{BJK($ \mathbb{Q}_1 $)}
		\end{loglogaxis}
	\end{tikzpicture}
	\caption{Example~\ref{subsec:layer}: Width of the smeared interior layer, measured by Eq.~\eqref{eq:smear},
		against the number of degrees of freedom for the non-uniform axis-aligned grid.}
	\label{fig:smearnu}
\end{figure}

\subsubsection{$\mu_i$ Sensitivity}
\label{subsec:musens}
The proposed limiter BJK($\mathbb{Q}_1$) differs from the BJK limiter only in the
factors $\mu_i$, and to accommodate bilinearity preservation the proposed $\mu_i$
is larger than the one sufficient for linearity preservation \cite{BJK17}. It is
therefore necessary to ask how much of the behaviour observed above is a
consequence of bilinearity preservation and how much simply of a larger parameter.
In this part we run the BJK limiter with a range of constant values of $\mu_i$ and
compare the results.

Fig.~\ref{fig:cutline_mu} shows the cutline at $y = 0.5$ on the $65 \times 65$
uniform mesh. Larger values of $\mu_i$ resolve the layer slightly better, and
Fig.~\ref{fig:smear_mu} confirms that the smearing decreases monotonically with
$\mu_i$ on every grid tested. The differences are small, and they are so for a
structural reason: $\mu_i$ enters the limiter only through the products
$\mu_i Q_i^{\pm}$ in Eq.~\eqref{eq:Rpm}, and at a strict local extremum
$Q_i^{+} = q_i(u_i - u_i^{\max}) = 0$, so that $R_i^{+} = 0$ for every
$\mu_i > 0$. The parameter is thus inactive precisely at the nodes at which the
limiter acts in the neighbourhood of a layer, which is also the mechanism behind
Theorem~\ref{Thm:DMP_BJK}.

Increasing $\mu_i$ is, on the other hand, not free. Fig.~\ref{fig:iterations_mu}
reports the number of iterations of the damped fixed point method needed to reach
the tolerance $10^{-10}$. Up to $\mu_i = 5$ the iteration converges on every grid,
with the count growing moderately with $\mu_i$; at $\mu_i = 10$ it stagnates on the
finer grids, and at $\mu_i = 50$ and $\mu_i = 200$ it does not reach the tolerance at
all within the prescribed number of iterations. The range of parameters for which
the nonlinear problem is solvable by this iteration is therefore narrow, and the
value $\mu_i = 3$ furnished by Theorem~\ref{Thm:Explicit_bound} on a uniform grid
lies inside it, close to its upper end. In this sense the value required for
bilinearity preservation is not merely admissible but close to the largest value
that can be used in practice. We emphasize that this is a statement about the
fixed point iteration of \cite{JJ19} with dynamic damping, which is the
method recommended for this class of problems, and not about the solvability of
Eq.~\eqref{eq:AFC_Sys} itself, which is guaranteed for every $\mu_i > 0$.

\begin{figure}[tbp]
	\centering
	\pgfplotslegendfromname{sharedlegend_mu}\\[1.5ex]
	
	\begin{minipage}[t]{0.48\textwidth}
	\centering
	\begin{tikzpicture}
		\begin{axis}[
			xlabel={$x$},
			ylabel={$u(x,0.5)$},
			xmin=-0.02, xmax=1.02,
			ymin=-0.05, ymax=1.05,
			plotpair,
			grid=major,
			grid style={dashed, gray!30},
			legend to name=sharedlegend_mu,
			legend columns=4,
			legend style={
				draw=none,
				fill=none,
				font=\small,
				/tikz/every even column/.append style={column sep=10pt}
			},
			tick label style={font=\scriptsize},
			label style={font=\small},
			set layers,
			mark layer=axis foreground
			]
			\addplot[
			color=blue, 
			solid, 
			semithick, 
			mark=*, 
			mark size=1.0pt, 
			mark repeat=6, 
			mark phase=1,
			mark options={solid}
			] table[x=x, y=mu_1] {\cutlinedatamu};
			\addlegendentry{$\mu_i = 1$}
			
			\addplot[
			color=red, 
			dashed, 
			semithick, 
			mark=square*, 
			mark size=1.0pt, 
			mark repeat=6, 
			mark phase=2,
			mark options={solid}
			] table[x=x, y={mu_1.41421}] {\cutlinedatamu};
			\addlegendentry{$\mu_i = \sqrt{2}$}
			
			\addplot[
			color=green!60!black, 
			dashdotted, 
			semithick, 
			mark=triangle*, 
			mark size=1.0pt, 
			mark repeat=6, 
			mark phase=3,
			mark options={solid}
			] table[x=x, y=mu_3] {\cutlinedatamu};
			\addlegendentry{$\mu_i = 3$}
			
			\addplot[
			color=orange, 
			densely dotted, 
			semithick, 
			mark=diamond*, 
			mark size=1.0pt, 
			mark repeat=6, 
			mark phase=4,
			mark options={solid}
			] table[x=x, y=mu_5] {\cutlinedatamu};
			\addlegendentry{$\mu_i = 5$}
			
			\addplot[
			color=purple, 
			solid, 
			semithick, 
			mark=pentagon*, 
			mark size=1.0pt, 
			mark repeat=6, 
			mark phase=5,
			mark options={solid}
			] table[x=x, y=mu_10] {\cutlinedatamu};
			\addlegendentry{$\mu_i = 10$}
			
			\addplot[
			color=teal, 
			dashed, 
			semithick, 
			mark=star, 
			mark size=1.0pt, 
			mark repeat=6, 
			mark phase=6,
			mark options={solid}
			] table[x=x, y=mu_50] {\cutlinedatamu};
			\addlegendentry{$\mu_i = 50$}
			
			\addplot[
			color=magenta, 
			solid, 
			semithick, 
			mark=x, 
			mark size=1.0pt, 
			mark repeat=6, 
			mark phase=1,
			mark options={solid}
			] table[x=x, y=mu_200] {\cutlinedatamu};
			\addlegendentry{$\mu_i = 200$}
		\end{axis}
	\end{tikzpicture}
	\end{minipage}%
	\hfill
	\begin{minipage}[t]{0.48\textwidth}
	\centering
	\begin{tikzpicture}
		\begin{axis}[
			xlabel={$x$},
			ylabel={$u(x,0.5)$},
			xmin=0.40, xmax=0.60,
			ymin=-0.05, ymax=1.05,
			plotpair,
			grid=major,
			grid style={dashed, gray!30},
			tick label style={font=\scriptsize},
			label style={font=\small},
			set layers,
			mark layer=axis foreground
			]
			\addplot[
			color=blue, 
			solid, 
			semithick, 
			mark=*, 
			mark size=1.0pt, 
			mark repeat=2, 
			mark phase=1,
			mark options={solid}
			] table[x=x, y=mu_1] {\cutlinezoomeddatamu};
			
			\addplot[
			color=red, 
			dashed, 
			semithick, 
			mark=square*, 
			mark size=1.0pt, 
			mark repeat=2, 
			mark phase=1,
			mark options={solid}
			] table[x=x, y={mu_1.41421}] {\cutlinezoomeddatamu};
			
			\addplot[
			color=green!60!black, 
			dashdotted, 
			semithick, 
			mark=triangle*, 
			mark size=1.0pt, 
			mark repeat=2, 
			mark phase=2,
			mark options={solid}
			] table[x=x, y=mu_3] {\cutlinezoomeddatamu};
			
			\addplot[
			color=orange, 
			densely dotted, 
			semithick, 
			mark=diamond*, 
			mark size=1.0pt, 
			mark repeat=2, 
			mark phase=2,
			mark options={solid}
			] table[x=x, y=mu_5] {\cutlinezoomeddatamu};
			
			\addplot[
			color=purple, 
			solid, 
			semithick, 
			mark=pentagon*, 
			mark size=1.0pt, 
			mark repeat=2, 
			mark phase=1,
			mark options={solid}
			] table[x=x, y=mu_10] {\cutlinezoomeddatamu};
			
			\addplot[
			color=teal, 
			dashed, 
			semithick, 
			mark=star, 
			mark size=1.0pt, 
			mark repeat=2, 
			mark phase=2,
			mark options={solid}
			] table[x=x, y=mu_50] {\cutlinezoomeddatamu};
			
			\addplot[
			color=magenta, 
			solid, 
			semithick, 
			mark=x, 
			mark size=1.0pt, 
			mark repeat=2, 
			mark phase=1,
			mark options={solid}
			] table[x=x, y=mu_200] {\cutlinezoomeddatamu};
		\end{axis}
	\end{tikzpicture}
	\end{minipage}
	\caption{Example~\ref{subsec:layer}: Cutline profiles at $y=0.5$ for various values of $\mu_i$ over the full domain (left) and magnified interior layer region $x \in [0.40, 0.60]$ (right).}
	\label{fig:cutline_mu}
\end{figure}

\begin{figure}[tbp]
	\centering
	\pgfplotslegendfromname{mu_dependency_legend}\\[1.5ex]
	\begin{tikzpicture}
		\begin{loglogaxis}[
			xlabel={Degrees of freedom ($n$)},
			ylabel={$\mathrm{Smear}$},
			plotsingle,
			grid=major,
			grid style={dashed, gray!30},
			legend to name=mu_dependency_legend,
			legend columns=-1,
			legend style={
				draw=none,
				fill=none,
				font=\small,
				/tikz/every even column/.append style={column sep=10pt}
			},
			tick label style={font=\scriptsize},
			label style={font=\small},
			log basis x={10},
			log basis y={10},			
			set layers,
			mark layer=axis foreground
			]
			\addplot[
			color=blue, 
			solid, 
			thick, 
			mark=*, 
			mark size=1.5pt, 
			mark repeat=2, 
			mark phase=1,
			mark options={solid}
			] table[x=DOFs, y=mu_1] {\convergencedatamudependency};
			\addlegendentry{$\mu_i=1$}
			
			\addplot[
			color=green!60!black, 
			dashdotted, 
			thick, 
			mark=square*, 
			mark size=1.5pt, 
			mark repeat=2, 
			mark phase=1,
			mark options={solid}
			] table[x=DOFs, y=mu_1_41421] {\convergencedatamudependency};
			\addlegendentry{$\mu_i=\sqrt{2}$}
			
			\addplot[
			color=orange, 
			densely dotted, 
			thick, 
			mark=triangle*, 
			mark size=1.5pt, 
			mark repeat=2, 
			mark phase=2,
			mark options={solid}
			] table[x=DOFs, y=mu_3] {\convergencedatamudependency};
			\addlegendentry{$\mu_i=3$}
			
			\addplot[
			color=purple, 
			solid, 
			thick, 
			mark=diamond*, 
			mark size=1.5pt, 
			mark repeat=2, 
			mark phase=2,
			mark options={solid}
			] table[x=DOFs, y=mu_5] {\convergencedatamudependency};
			\addlegendentry{$\mu_i=5$}
			
			\addplot[
			color=red!80!black, 
			dashed, 
			thick, 
			mark=pentagon*, 
			mark size=1.5pt, 
			mark repeat=2, 
			mark phase=1,
			mark options={solid}
			] table[x=DOFs, y=mu_10] {\convergencedatamudependency};
			\addlegendentry{$\mu_i=10$}
			
			\addplot[
			color=teal, 
			loosely dotted, 
			thick, 
			mark=asterisk, 
			mark size=1.5pt, 
			mark repeat=2, 
			mark phase=2,
			mark options={solid}
			] table[x=DOFs, y=mu_50] {\convergencedatamudependency};
			\addlegendentry{$\mu_i=50$}
			
			\addplot[
			color=magenta, 
			dashdotdotted, 
			thick, 
			mark=oplus*, 
			mark size=1.5pt, 
			mark repeat=2, 
			mark phase=1,
			mark options={solid}
			] table[x=DOFs, y=mu_200] {\convergencedatamudependency};
			\addlegendentry{$\mu_i=200$}
		\end{loglogaxis}
	\end{tikzpicture}
	\caption{Example~\ref{subsec:layer}: Width of the smeared interior layer plotted against the degrees of freedom for various values of $\mu_i$.}
	\label{fig:smear_mu}
\end{figure}

\begin{figure}[tbp]
	\centering
	\pgfplotslegendfromname{mu_dependency_legend}\\[1.5ex]
\begin{tikzpicture}
	\begin{semilogxaxis}[
		xlabel={Degrees of freedom ($n$)},
		ylabel={Iterations to convergence},
		plotsingle,
		grid=major,
		grid style={dashed, gray!30},
		log basis x={10},
		tick label style={font=\scriptsize},
		label style={font=\small},
		set layers,
		mark layer=axis foreground
		]
		\addplot[color=blue, solid, thick, mark=*, mark size=1.5pt,
		mark repeat=2, mark phase=1, mark options={solid}]
		table[x=DOFs, y=mu_1] {\dynamicdampingiterations};
		
		\addplot[color=green!60!black, dashdotted, thick, mark=square*, mark size=1.5pt,
		mark repeat=2, mark phase=1, mark options={solid}]
		table[x=DOFs, y=mu_sqrt2] {\dynamicdampingiterations};
		
		\addplot[color=orange, densely dotted, thick, mark=triangle*, mark size=1.5pt,
		mark repeat=2, mark phase=2, mark options={solid}]
		table[x=DOFs, y=mu_3] {\dynamicdampingiterations};
		
		\addplot[color=purple, solid, thick, mark=diamond*, mark size=1.5pt,
		mark repeat=2, mark phase=2, mark options={solid}]
		table[x=DOFs, y=mu_5] {\dynamicdampingiterations};
		
		\addplot[color=red!80!black, dashed, thick, mark=pentagon*, mark size=1.5pt,
		mark repeat=2, mark phase=1, mark options={solid}]
		table[x=DOFs, y=mu_10] {\dynamicdampingiterations};
		
		\addplot[color=teal, loosely dotted, thick, mark=asterisk, mark size=1.5pt,
		mark repeat=2, mark phase=2, mark options={solid}]
		table[x=DOFs, y=mu_50] {\dynamicdampingiterations};
		
		\addplot[color=magenta, dashdotdotted, thick, mark=oplus*, mark size=1.5pt,
		mark repeat=2, mark phase=1, mark options={solid}]
		table[x=DOFs, y=mu_200] {\dynamicdampingiterations};
	\end{semilogxaxis}
\end{tikzpicture}
	\caption{Example~\ref{subsec:layer}: Dynamic Damping Iterations to Convergence for various values of $\mu_i$.}
	\label{fig:iterations_mu}
\end{figure}

\section{Conclusions and Outlook}
\label{sec:conclusion}

The starting point of this work was the observation that linearity preservation, which
is the accuracy mechanism underlying the BJK limiter and its relatives, is not the
natural requirement on quadrilateral and hexahedral grids. On such grids the local
finite element space is $\mathbb{Q}_1$, which strictly contains $\mathbb{P}_1$, and the functions in
$\mathbb{Q}_1  $ are harmonic, satisfy the classical maximum principle, and on
axis-aligned grids lie in the finite element space. A limiter that switches off only
on affine functions therefore adds artificial diffusion on a class of functions that
the discretization reproduces exactly.

Removing this deficiency turned out to require a genuinely different argument from the
affine case. For affine functions, the value at an interior node lies between the
extreme values at the surrounding nodes as soon as the node lies in the convex hull of
its neighbours, so a finite parameter $\mu_i$ exists on every admissible patch. For
bilinear functions this is false, and the failure is not exotic: on a symmetric
4-Quad patch the function $-xy$ attains its largest value at the central
node. Bilinearity preservation is therefore attainable only on a restricted class of
grids, and identifying that class was a prerequisite for everything else.

Our answer is that the correct condition is a convexity condition, but one imposed
after lifting. A patch admits the bilinear maximum principle Eq.~\eqref{eq:BMP} if and only
if it is bilinearly convex, i.e.\ if the origin lies in the convex hull in $\R^3$ of
the lifted neighbouring nodes $(x_j - x_i,\, y_j - y_i,\, (x_j - x_i)(y_j - y_i))$
(Theorem~\ref{thm:BMPiffBC}). The condition is intrinsic to the geometry of the patch,
it involves no quantity from the discretization, and it can be verified by solving a
small linear feasibility problem. On any such patch a finite $\mu_i$ exists
(Proposition~\ref{Prop:Existence}) and is bounded by $\alpha_{\min}^{-1} - 1$, where
$\alpha_{\min}$ is the smallest of the convex weights that represent the origin. For
axis-aligned rectangular patches these weights are the ratios of the diametrically
opposite quadrant areas, and the bound collapses to the closed form
$\mu_i = (R_xR_y)^{-1} - 1$ in the linear eccentricities of the patch
(Theorem~\ref{Thm:Explicit_bound}); an explicit example shows that this value cannot be
lowered. The three-dimensional analogue $\mu_i = (R_xR_yR_z)^{-1} - 1$ follows with no
additional work (Remark~\ref{Rem:3D}). Since the resulting BJK($\mathbb{Q}_1$) limiter differs from the
BJK limiter only in the choice of $\mu_i$, it inherits the solvability, discrete
maximum principle and convergence results of \cite{BJK25} unchanged.

A separate result of the paper concerns representability rather than limiting. A mapped
$\mathbb{Q}_1$ space contains a prescribed physical bilinear function if and only if the
physical cells are rectangles with edges parallel to the coordinate axes
(Theorem~\ref{thm:exactrep}), and consequently the Galerkin solution of a problem with a
bilinear exact solution is exact precisely on axis-aligned grids
(Corollary~\ref{Cor:GalerkinExact}). This is an obstruction of the finite element
space, not of the stabilization, and it delimits what any limiter can achieve on
general quadrilateral grids; it is in the same spirit as the approximation-theoretic
limitations of mapped quadrilateral elements analysed in
\cite{ABF02}.

The numerical experiments confirm this picture on both sides. On axis-aligned grids,
uniform and graded alike, the BJK($\mathbb{Q}_1$) limiter reproduces the bilinear solution $u = 10xy$ to
the accuracy of the nonlinear iteration, while the BJK limiter produces an error of the
size of the discretization error; on randomly perturbed grids neither does, as
Corollary~\ref{Cor:Isoparametric} predicts. The three-dimensional example exhibits the
trilinear analogue and attains the optimal empirical orders $2$ in $L^2$ and $1$ in
$H^1$. On the interior-layer problem of
\cite{HMM86} the BJK($\mathbb{Q}_1$) limiter smears the layer less than the BJK limiter, the
limiter of \cite{Ku09} and the low-order scheme, on every grid tested.

It is worth being explicit about the mechanism behind the last observation. On a
uniform patch the BJK prescription gives $\mu_i = \sqrt{2}$ (and $1$ is sufficient
for locally symmetric meshes \cite{BJK17}) whereas ours gives $\mu_i = 3$, so the
BJK($\mathbb{Q}_1$) limiter is the \emph{less} restrictive of the two: a larger
$\mu_i$ reinstates more antidiffusion. The gain in sharpness is bought at no cost in
monotonicity only because the discrete maximum principles of
Theorem~\ref{Thm:DMP_BJK} hold for any $\mu_i > 0$.

That the two limiters differ only through this constant makes it necessary to ask
how much of the improvement is due to bilinearity preservation and how much simply
to a larger parameter. The sensitivity study of Section~\ref{subsec:musens} answers
this, and the answer is instructive in both directions. The smearing of the layer
decreases monotonically with $\mu_i$, so the gain at a layer is a consequence of the
value of the parameter rather than of the property it encodes; and the gain is
small, because $\mu_i$ multiplies $Q_i^{\pm}$, which vanishes identically at a
strict local extremum, so that the parameter is inactive precisely at the nodes at
which the limiter acts near a layer. It follows in particular that
$\mu_i \to \infty$ does not recover the unstabilized Galerkin method: the limit is
Galerkin away from local extrema, while full artificial diffusion is retained at
them, which is what Theorem~\ref{Thm:DMP_BJK} asserts for every $\mu_i > 0$.

Increasing $\mu_i$ is nevertheless not free. The damped fixed point iteration of
\cite{JJ19} converges for the parameters up to $\mu = 5$, stagnates at
$\mu = 10$ on the finer grids, and fails to reach the prescribed tolerance at
$\mu = 50$ and $\mu = 200$. The range of usable parameters is therefore narrow, and
the value $\mu_i = 3$ required by Theorem~\ref{Thm:Explicit_bound} on a uniform
grid lies inside it, close to its upper end: the constant needed for bilinearity
preservation is close to the largest one that the recommended solver tolerates.
This also indicates where the construction should be treated with care. On strongly
graded grids $R_x$ or $R_y$ becomes small and $\mu_i = (R_xR_y)^{-1} - 1$ becomes
large, so that the parameter demanded by the geometry may leave the range in which
the nonlinear problem can be solved by this iteration. Whether another iterative
scheme extends that range, and how the bound interacts with layer-adapted grids,
which are axis-aligned tensor-product grids so that our formula applies verbatim
but with eccentricities that degenerate as $\varepsilon \to 0$, is not addressed
here and deserves a dedicated study.

Several further directions suggest themselves. The most immediate is the extension to
time-dependent convection--diffusion--reaction problems, where algebraic stabilization
is combined with a time discretization and the limiter acts on each time step; the
question is whether the geometric constants derived here transfer unchanged or whether
the time discretization imposes an additional constraint on $\mu_i$.

A second direction concerns non-axis-aligned grids. The characterization of
Theorem~\ref{thm:BMPiffBC} and the general bound Eq.~\eqref{general_formula} are not
restricted to axis-aligned patches, so a bilinearity-preserving limiter can be
constructed on any bilinearly convex patch; what is lost there is not the limiter but
the exactness, by Theorem~\ref{thm:exactrep}. It would therefore be of interest to test
Eq.~\eqref{general_formula} on general quadrilateral grids in combination with a
discretization for which the representability obstruction does not arise.

This leads to the third and, in our view, most interesting direction. Bilinear
functions are a particular family of harmonic functions, and the property that makes
them the right target the classical maximum principle is shared by all harmonic
functions. One would like a scheme that introduces no artificial diffusion whenever the
local solution is harmonic, rather than merely multilinear. The virtual element method \cite{VEM}
appears to be a natural framework for such a construction, since its local spaces are
defined implicitly through a boundary value problem and contain harmonic functions by
design, and since it accommodates general polygonal elements on which the notion of
bilinear convexity introduced here remains meaningful.

Finally, the theoretical convergence theory continues to lag behind what is observed.
The hypothesis of \cite{BJK17} that linearity preservation is responsible for optimal
convergence rates is supported by our experiments in the multilinear setting, but a
proof of optimal rates for algebraically stabilized schemes for the BJK limiter as
much as for the BJK($\mathbb{Q}_1$) limiter remains open.

\noindent{\bf Code availability}.
The Julia implementation and the data generating the numerical results of this
paper are available at \url{https://doi.org/10.5281/zenodo.22707748}.

\noindent{\bf Acknowledgement}.
The work of the second author was supported partially by the Institute grant IP/52016 and by the INSPIRE Faculty Fellowship Research Grant DST/INSPIRE/04/2024/000202.

\bibliographystyle{plain}
\bibliography{Bilinearity_Preserving}

@book{RST08,
  author    = {Roos, Hans-G{\"o}rg and Stynes, Martin and Tobiska, Lutz},
  title     = {Robust Numerical Methods for Singularly Perturbed Differential Equations},
  subtitle  = {Convection-Diffusion-Reaction and Flow Problems},
  series    = {Springer Series in Computational Mathematics},
  volume    = {24},
  edition   = {2},
  publisher = {Springer},
  address   = {Berlin, Heidelberg},
  year      = {2008},
  pages     = {604}
}

@article {JR10,
    AUTHOR = {John, Volker and Roland, Michael},
     TITLE = {On the impact of the scheme for solving the higher dimensional
              equation in coupled population balance systems},
   JOURNAL = {Internat. J. Numer. Methods Engrg.},
  FJOURNAL = {International Journal for Numerical Methods in Engineering},
    VOLUME = {82},
      YEAR = {2010},
    NUMBER = {11},
     PAGES = {1450--1474},
      ISSN = {0029-5981,1097-0207},
   MRCLASS = {76M25 (65M60 76D05 76V05)},
  MRNUMBER = {2664451},
       DOI = {10.1002/nme.2830},
       URL = {https://doi.org/10.1002/nme.2830},
}

@software{Ferrite_jl,
  author  = {Kristoffer Carlsson and Fredrik Ekre and Ferrite.jl contributors},
  title   = {Ferrite.jl},
  publisher = {Zenodo},
  doi     = {10.5281/zenodo.18196628},
  year    = {2025}
}

@article {VEM,
    AUTHOR = {Beir\~ao da Veiga, Louren\c co and Brezzi, Franco and Marini,
              L. Donatella and Russo, Alessandro},
     TITLE = {The virtual element method},
   JOURNAL = {Acta Numer.},
  FJOURNAL = {Acta Numerica},
    VOLUME = {32},
      YEAR = {2023},
     PAGES = {123--202},
      ISSN = {0962-4929,1474-0508},
   MRCLASS = {65N30},
  MRNUMBER = {4586821},
MRREVIEWER = {Feng\ Wang},
       DOI = {10.1017/S0962492922000095},
       URL = {https://doi.org/10.1017/S0962492922000095},
}

@article {Kn10,
    AUTHOR = {Knobloch, Petr},
     TITLE = {A generalization of the local projection stabilization for
              convection-diffusion-reaction equations},
   JOURNAL = {SIAM J. Numer. Anal.},
  FJOURNAL = {SIAM Journal on Numerical Analysis},
    VOLUME = {48},
      YEAR = {2010},
    NUMBER = {2},
     PAGES = {659--680},
      ISSN = {0036-1429,1095-7170},
   MRCLASS = {65N30 (65N12 65N15)},
  MRNUMBER = {2670000},
MRREVIEWER = {Ricardo\ Ruiz Baier},
       DOI = {10.1137/090767807},
       URL = {https://doi.org/10.1137/090767807},
}

@article {Ku06,
    AUTHOR = {Kuzmin, D.},
     TITLE = {On the design of general-purpose flux limiters for finite
              element schemes. {I}. {S}calar convection},
   JOURNAL = {J. Comput. Phys.},
  FJOURNAL = {Journal of Computational Physics},
    VOLUME = {219},
      YEAR = {2006},
    NUMBER = {2},
     PAGES = {513--531},
      ISSN = {0021-9991,1090-2716},
   MRCLASS = {76M10 (65M60)},
  MRNUMBER = {2274948},
       DOI = {10.1016/j.jcp.2006.03.034},
       URL = {https://doi.org/10.1016/j.jcp.2006.03.034},
}

@incollection{KM05,
  author    = {Kuzmin, Dmitri and M\"oller, Matthias},
  title     = {Algebraic Flux Correction {I}. {S}calar Conservation Laws},
  booktitle = {Flux-Corrected Transport: Principles, Algorithms, and Applications},
  editor    = {Kuzmin, Dmitri and L{\"o}hner, Rainald and Turek, Stefan},
  series    = {Scientific Computation},
  pages     = {155--206},
  publisher = {Springer},
  address   = {Berlin, Heidelberg},
  year      = {2005},
  doi       = {10.1007/3-540-27206-2_6}
}

@article {JJ19,
    AUTHOR = {Jha, Abhinav and John, Volker},
     TITLE = {A study of solvers for nonlinear {AFC} discretizations of
              convection-diffusion equations},
   JOURNAL = {Comput. Math. Appl.},
  FJOURNAL = {Computers \& Mathematics with Applications. An International
              Journal},
    VOLUME = {78},
      YEAR = {2019},
    NUMBER = {9},
     PAGES = {3117--3138},
      ISSN = {0898-1221,1873-7668},
   MRCLASS = {65N30},
  MRNUMBER = {4015770},
MRREVIEWER = {Kathrin\ Smetana},
       DOI = {10.1016/j.camwa.2019.04.020},
       URL = {https://doi.org/10.1016/j.camwa.2019.04.020},
}

@article {BH82,
    AUTHOR = {Brooks, Alexander N. and Hughes, Thomas J. R.},
     TITLE = {Streamline upwind/{P}etrov-{G}alerkin formulations for
              convection dominated flows with particular emphasis on the
              incompressible {N}avier-{S}tokes equations},
      NOTE = {FENOMECH ''81, Part I (Stuttgart, 1981)},
   JOURNAL = {Comput. Methods Appl. Mech. Engrg.},
  FJOURNAL = {Computer Methods in Applied Mechanics and Engineering},
    VOLUME = {32},
      YEAR = {1982},
    NUMBER = {1-3},
     PAGES = {199--259},
      ISSN = {0045-7825,1879-2138},
   MRCLASS = {76-08 (65N30 76R99)},
  MRNUMBER = {679322},
       DOI = {10.1016/0045-7825(82)90071-8},
       URL = {https://doi.org/10.1016/0045-7825(82)90071-8},
}

@article {BE07,
    AUTHOR = {Burman, Erik and Ern, Alexandre},
     TITLE = {Continuous interior penalty {$hp$}-finite element methods for
              advection and advection-diffusion equations},
   JOURNAL = {Math. Comp.},
  FJOURNAL = {Mathematics of Computation},
    VOLUME = {76},
      YEAR = {2007},
    NUMBER = {259},
     PAGES = {1119--1140},
      ISSN = {0025-5718,1088-6842},
   MRCLASS = {76M10 (65N30)},
  MRNUMBER = {2299768},
MRREVIEWER = {Abdellatif\ Serghini Mounim},
       DOI = {10.1090/S0025-5718-07-01951-5},
       URL = {https://doi.org/10.1090/S0025-5718-07-01951-5},
}

@article {BJK24,
    AUTHOR = {Barrenechea, Gabriel R. and John, Volker and Knobloch, Petr},
     TITLE = {Finite element methods respecting the discrete maximum
              principle for convection-diffusion equations},
   JOURNAL = {SIAM Rev.},
  FJOURNAL = {SIAM Review},
    VOLUME = {66},
      YEAR = {2024},
    NUMBER = {1},
     PAGES = {3--88},
      ISSN = {1095-7200,0036-1445},
   MRCLASS = {65M60 (65M12)},
  MRNUMBER = {4704682},
MRREVIEWER = {Abhinav\ Jha},
       DOI = {10.1137/22M1488934},
       URL = {https://doi.org/10.1137/22M1488934},
}

@book{BJK25,
  author    = {Barrenechea, Gabriel R. and John, Volker and Knobloch, Petr},
  title     = {Monotone Discretizations for Elliptic Second Order Partial
               Differential Equations},
  series    = {Springer Series in Computational Mathematics},
  volume    = {61},
  publisher = {Springer},
  address   = {Cham},
  year      = {2025},
  doi       = {10.1007/978-3-031-80684-1}
}

@article {BJK16,
    AUTHOR = {Barrenechea, Gabriel R. and John, Volker and Knobloch, Petr},
     TITLE = {Analysis of algebraic flux correction schemes},
   JOURNAL = {SIAM J. Numer. Anal.},
  FJOURNAL = {SIAM Journal on Numerical Analysis},
    VOLUME = {54},
      YEAR = {2016},
    NUMBER = {4},
     PAGES = {2427--2451},
      ISSN = {0036-1429,1095-7170},
   MRCLASS = {65N30 (65N12)},
  MRNUMBER = {3537011},
MRREVIEWER = {Thomas\ Lee\ Lewis},
       DOI = {10.1137/15M1018216},
       URL = {https://doi.org/10.1137/15M1018216},
}

@article {BJK17,
    AUTHOR = {Barrenechea, Gabriel R. and John, Volker and Knobloch, Petr},
     TITLE = {An algebraic flux correction scheme satisfying the discrete
              maximum principle and linearity preservation on general
              meshes},
   JOURNAL = {Math. Models Methods Appl. Sci.},
  FJOURNAL = {Mathematical Models and Methods in Applied Sciences},
    VOLUME = {27},
      YEAR = {2017},
    NUMBER = {3},
     PAGES = {525--548},
      ISSN = {0218-2025,1793-6314},
   MRCLASS = {65N30 (65N12 65N15)},
  MRNUMBER = {3621336},
MRREVIEWER = {S\'ebastien\ Court},
       DOI = {10.1142/S0218202517500087},
       URL = {https://doi.org/10.1142/S0218202517500087},
}

@article {Ku09,
    AUTHOR = {Kuzmin, Dmitri},
     TITLE = {Explicit and implicit {FEM}-{FCT} algorithms with flux
              linearization},
   JOURNAL = {J. Comput. Phys.},
  FJOURNAL = {Journal of Computational Physics},
    VOLUME = {228},
      YEAR = {2009},
    NUMBER = {7},
     PAGES = {2517--2534},
      ISSN = {0021-9991,1090-2716},
   MRCLASS = {76M25 (65M60)},
  MRNUMBER = {2501695},
MRREVIEWER = {So-Hsiang\ Chou},
       DOI = {10.1016/j.jcp.2008.12.011},
       URL = {https://doi.org/10.1016/j.jcp.2008.12.011},
}

@article {Zal79,
    AUTHOR = {Zalesak, Steven T.},
     TITLE = {Fully multidimensional flux-corrected transport algorithms for
              fluids},
   JOURNAL = {J. Comput. Phys.},
  FJOURNAL = {Journal of Computational Physics},
    VOLUME = {31},
      YEAR = {1979},
    NUMBER = {3},
     PAGES = {335--362},
      ISSN = {0021-9991,1090-2716},
   MRCLASS = {76X05},
  MRNUMBER = {534786},
       DOI = {10.1016/0021-9991(79)90051-2},
       URL = {https://doi.org/10.1016/0021-9991(79)90051-2},
}

@article {HMM86,
    AUTHOR = {Hughes, T. J. R. and Franca, L. P. and Mallet, M.},
     TITLE = {A new finite element formulation for computational fluid
              dynamics. {I}. {S}ymmetric forms of the compressible {E}uler
              and {N}avier-{S}tokes equations and the second law of
              thermodynamics},
   JOURNAL = {Comput. Methods Appl. Mech. Engrg.},
  FJOURNAL = {Computer Methods in Applied Mechanics and Engineering},
    VOLUME = {54},
      YEAR = {1986},
    NUMBER = {2},
     PAGES = {223--234},
      ISSN = {0045-7825,1879-2138},
   MRCLASS = {76-08 (76N10)},
  MRNUMBER = {831553},
       DOI = {10.1016/0045-7825(86)90127-1},
       URL = {https://doi.org/10.1016/0045-7825(86)90127-1},
}

@article {JJK24,
    AUTHOR = {Jha, Abhinav and John, Volker and Knobloch, Petr},
     TITLE = {Adaptive grids in the context of algebraic stabilizations for
              convection-diffusion-reaction equations},
   JOURNAL = {SIAM J. Sci. Comput.},
  FJOURNAL = {SIAM Journal on Scientific Computing},
    VOLUME = {45},
      YEAR = {2023},
    NUMBER = {4},
     PAGES = {B564--B589},
      ISSN = {1064-8275,1095-7197},
   MRCLASS = {65N55 (65N30)},
  MRNUMBER = {4622977},
MRREVIEWER = {Jinru\ Chen},
       DOI = {10.1137/21M1466360},
       URL = {https://doi.org/10.1137/21M1466360},
}

@article {ABF02,
    AUTHOR = {Arnold, Douglas N. and Boffi, Daniele and Falk, Richard S.},
     TITLE = {Approximation by quadrilateral finite elements},
   JOURNAL = {Math. Comp.},
  FJOURNAL = {Mathematics of Computation},
    VOLUME = {71},
      YEAR = {2002},
    NUMBER = {239},
     PAGES = {909--922},
      ISSN = {0025-5718,1088-6842},
   MRCLASS = {65N30},
  MRNUMBER = {1898739},
       DOI = {10.1090/S0025-5718-02-01439-4},
       URL = {https://doi.org/10.1090/S0025-5718-02-01439-4},
}
\end{document}